\newtheorem{thm}{Theorem}
\newtheorem{lem}[thm]{Lemma}
\numberwithin{thm}{section} 
\newtheorem{rmk}[thm]{Remark}
\newproof{pf}{Proof}
\newproof{pot}{Proof of Theorem \ref{thm2}}
\newtheorem{prop}[thm]{Proposition}
\newtheorem{corol}[thm]{Corollary}
\newcommand{\afunc}[1]{\operatorname{\mathsf{#1}}}
\def\B#1{\mbox{\boldmath{$#1$}}}
\newcommand{\eps}{\varepsilon}
\newcommand{\pd}{\partial}
\newcommand{\nn}{\nonumber}
\newcommand{\bu}{\B{u}}
\newcommand{\bw}{\B{w}}
\newcommand{\bn}{\mathbf{n}}
\newcommand{\bv}{\B{v}}
\newcommand{\bx}{\B{x}}
\newcommand{\UU}{\mathcal{V}}
\newcommand{\WW}{\mathcal{W}}
\newcommand{\dt}{\dfrac{{\rm d}}{{\rm d}t}}
\def\be{\begin{equation}}
\def\ee{\end{equation}}
\def\ba{\begin{array}}
\def\ea{\end{array}}
\def\bea{\begin{eqnarray}}
\def\eea{\end{eqnarray}}
\def\beas{\begin{eqnarray*}}
\def\eeas{\end{eqnarray*}}
\newcommand{\bseq}{\begin{subequations}}
\newcommand{\eseq}{\end{subequations}}
    \def\ps@pprintTitle{%
       \let\@oddhead\@empty
       \let\@evenhead\@empty
       \def\@oddfoot{\reset@font\hfil\thepage\hfil}
       \let\@evenfoot\@oddfoot
    }
\crefname{lem}{Lemma}{Lemmas}
\crefname{thm}{Theorem}{Theorems}
\crefname{prop}{Proposition}{Propositions}
\crefname{corol}{Corollary}{Corollaries}
\crefname{rmk}{Remark}{Remarks}
\newcommand{\myref}[1]{\cref{#1}\mynameref{#1}{\csname r@#1\endcsname}}
\newcommand{\Myref}[1]{\Cref{#1}\mynameref{#1}{\csname r@#1\endcsname}}
\def\mynameref#1#2{%
  \begingroup
    \edef\@mytxt{#2}%
    \edef\@mytst{\expandafter\@thirdoffive\@mytxt}%
    \ifx\@mytst\empty\else
    \space(\nameref{#1})\fi
  \endgroup
}
\journal{Journal of Computational Physics}
\begin{document}

\begin{frontmatter}

\title{{\large{An energy-dissipative level-set method for the incompressible two-phase Navier-Stokes equations with surface tension using functional entropy variables}}}
\author{M.F.P. ten Eikelder\corref{cor1}}
\ead{marco.ten.eikelder@gmail.com}
\cortext[cor1]{Corresponding author}
\author{I. Akkerman}
\ead{i.akkerman@tudelft.nl}

\address{Delft University of Technology, Department of Mechanical, Maritime and Materials Engineering, P.O. Box 5, 2600 AA Delft, The Netherlands}
\date{}

\begin{abstract}
This paper presents the first energy-dissipative level-set method for the incompressible Navier-Stokes equations with surface tension. The methodology relies on the recently proposed concept of functional entropy variables. Discretization in space is performed with isogeometric analysis. Temporal-integration is performed with a new perturbed midpoint scheme. The fully-discrete scheme is unconditionally energy-dissipative, pointwise divergence-free and satisfies the maximum principle for the density. Numerical examples in two and three dimensions verify the  energetic-stability of the methodology.
\end{abstract}

\begin{keyword}
Incompressible two-phase flow \sep Surface tension \sep Energy dissipation \sep Level-set methods \sep Functional entropy variables   \sep Isogeometric analysis
\end{keyword}

\end{frontmatter}

{
  \hypersetup{linkcolor=black}
  \tableofcontents
}
\section{Introduction}
\label{sec:Introduction}

This paper proposes a novel energy-dissipative numerical method for the computation of the incompressible Navier-Stokes equations with surface tension. Our method employs the level-set method to capture the fluid interface. The method uses so-called functional entropy variables and is unconditionally energy-dissipative, pointwise divergence-free and satisfies the maximum principle for the density. The energetic stability improves robustness features and as such the proposed approach is suitable choice for the simulation of immiscible fluids.

\subsection{Free-surface flow modeling}
Incompressible free-surface flows with surface tension appear in a large class of applications ranging from marine and offshore engineering, e.g. sloshing of LNG in tanks or wave impacts, to bubble dynamics. 
Applications typically involve violent free-surface flows.
As a result topological changes (e.g. break-up or coalescence) occur.
Numerical methods for two-fluid flow problems typically follow the free-surface motion with either mesh-motion or an extra variable to capture the topological changes.
The first class of methods is known as interface-tracking methods whereas the second are the interface-capturing methods.
When there is a large amount of topological changes interface-tracking methods are an unfortunate choice. 
On the other hand, interface capturing methods \cite{hughes1981lagrangian,tezduyar1992new,unverdi1992front} naturally deal with the interface and seem in this case to be the more suitable choice.

Interface capturing methods can roughly be divided into phase-field methods, volume-of-fluid methods and level-set methods, see \cite{elgeti2016deforming} for a discussion. 
The phase field models \cite{gomez2008isogeometric,liu2013functional,liu2014thermodynamically,gomez2014accurate} are known for their rigorous thermodynamical structure. 
The main issue is that numerical methods for phase field models do not provably satisfy the maximum principle for the density \cite{shokrpour2018diffuse}. 
Volume-of-fluid methods \cite{hirt1981volume,pilliod2004second,seric2018direct} are popular methods, 
also for compressible flows modeling \cite{baer1986two,kapila2001two}, but suffer from the same discrepancy. 
Monotonicity is generally only guaranteed if a CFL-like condition is fulfilled, see e.g. \cite{ten2017acoustic}. 
When simulating air-water flows the monotonicity property is crucial. 
Therefore we employ in this paper the level-set method \cite{SSO94, sethian1999level, Sethian_01,akkerman2017monotone} which by construction satisfies the maximum principle for the density. 
The level set method does not limit the complexity of the free-surface flow nor the flow regime. 
It has proven to be suitable tool for free-surface flows in marine applications, e.g. \cite{Nagrath_etal_05,ABKF11,AkBaBeFaKe12,akkerman2019toward}. 

\subsection{Surface tension}
Apart from the ability to capture the interface location, the extra variable in interface capturing methods may be used to evaluate the surface tension contribution.
In volume-of-fluid and level-set methods the interface normal and curvature may be computed similarly.
It is well-known, see e.g. \cite{abadie2015combined, popinet2018numerical}, that surface tension effects are better represented when using the level-set approach as compared with the volume-of-fluid approach.
We refer to \cite{gross2007finite} for error analysis of the surface tension force in the level-set method.
The standard and most popular approach is to use the continuum model of Brackbill et al. \cite{brackbill1992continuum}.
In the discrete approximation the evaluation of the curvature often employs a projection step for lower-order methods which leads to inaccuracies.
In a recently paper \cite{yan2019isogeometric} the authors show that the accuracy of the curvature improves significantly when using a smooth higher-order NURBS-based isogeometric discretization \cite{HuCoBa04}.

\subsection{Energetic stability}
Level-set methods are, to the best knowledge of the authors, never equipped with a thermodynamically stable algorithm.
However the notion of energetic stability\footnote{Note that thermodynamically stable resembles in the isothermal case 
energetically stable as Clausius-Duhem inequality reduces to an energy-dissipative inequality.} is of practical importance. 
In \cite{AkBaBeFaKe12} is it shown that for a viscous air-water level-set simulation in certain situations artificial energy may be created.
This leads to a nonphysical prediction of the fluid behavior.
The approach of proving an energetic stability result in a Galerkin-type formulation would be to select the appropriate weights.
Unfortunately, the suitable test functions are not available in typical finite element methods.
This applies to the spatial and temporal discretization independently.

\subsection{This work}
In this paper we address one of the main discrepancies of diffuse-interface level-set methods, namely the above mentioned absence of an energetic stability property. We circumvent the limitation caused by the function spaces by introducing the unavailable weighting function as a new variable via so-called functional entropy variables. This concept is the natural alternative to entropy variables when the mathematical entropy associated with the system of equations is a functional (instead of a function) of the conservation variables. We naturally integrate this new variable into the level-set model via the surface tension term. This creates the required extra freedom and as a result the associated weak form is equipped with energetic stability for standard divergence-conforming function spaces. The formulation does not require the evaluation of the curvature and a such is not limited to higher-order discretizations. To inherit energetic stability in a semi-discrete sense we employ a NURBS-based isogeometric analysis Galerkin-type discretization. Furthermore, we introduce an SUPG stabilization mechanisms that does not upset the energy-dissipative property of the method. Additionally, we augment the momentum equation with a residual-based discontinuity capturing term. For the temporal discretization we propose a new time-stepping scheme which can be understood as a perturbation of the midpoint rule. The result is a consistent fully-discrete energy-dissipative scheme that is pointwise divergence-free and satisfies the maximum principle for the density.

\subsection{Structure of the paper}
The remainder of this paper is organized as follows. \cref{sec: Standard continuous formulations} presents and analyzes the energy behavior of the sharp-interface incompressible Navier-Stokes equations with surface tension. In \cref{sec: diffuse} we use the sharp-interface model as a starting point to derive the diffuse level-set model and provide a detailed analysis in terms of energy behavior. Additionally, we extensively discuss the level-set form of the surface tension contribution. In \cref{sec: Energy-dissipative formulation} we employ the functional entropy variables to obtain a modified energy-dissipative formulation. Then, in \cref{sec: Energy-dissipative discretization} we present the semi-discrete energetically-stable formulation.  Next, in \cref{sec: Time-discrete formulation} we present the fully-discrete energy-dissipative method. \cref{sec: Numerical experiments} shows the numerical experiments in two and three dimensions which verify the energy-dissipative property of the scheme.

\section{Sharp-interface formulation}\label{sec: Standard continuous formulations}
\subsection{Governing equations}
Let $\Omega \subset \mathbb{R}^d$, $d=2,3$, denote the spatial domain with boundary $\partial \Omega$.
We consider two immiscible incompressible fluids that occupy subdomains $\Omega_i \subset \Omega$, $i = 1,2$, in the sense $\bar{\Omega} = \bar{\Omega}_1 \cup \bar{\Omega}_2$ and $\Omega_1 \cap \Omega_2 = \emptyset$. A time-dependent smooth interface $\Gamma = \partial \Omega_1 \cap \partial \Omega_2$ separates the fluids.
The problem under consideration consists of solving the incompressible Navier-Stokes equations with surface tension dictating the two-fluid flow:
\begin{subequations}\label{eq: strong form per domain}
\label{eq:weak}
\begin{align}
 \rho_i \left(\partial_t \bu + \bu\cdot \nabla \bu\right) - \mu_i \Delta \bu + \nabla p   =&~ \rho_i \mathbf{g}, & \quad \text{in} &  \quad \Omega_i(t) \label{eq: strong mom}\\
 \nabla \cdot \bu =&~ 0 & \quad \text{in} & \quad  \Omega_i(t), \label{eq: strong cont} \\
[\![\![ \bu ]\!]\!]  =&~ 0 & \quad \text{on} &\quad \Gamma(t), \label{eq: strong u continuous}\\
    [\![\![ \mathbf{S}(\bu,p)\boldsymbol{\nu} ]\!]\!]  =&~ \sigma \kappa \boldsymbol{\nu}  & \quad \text{on} & \quad \Gamma(t), \label{eq: strong stress surface tension}\\
    V =&~ \bu\cdot \boldsymbol{\nu} & \quad \text{on} & \quad \Gamma(t), \label{eq: strong velocity interface}
\end{align}
\end{subequations}
with $\bu(\mathbf{x},0) = \bu_0(\mathbf{x}) $ in $\Omega_i(0)$ and $\Gamma(0)= \Gamma_0$ for the fluid velocity $\bu:\Omega \rightarrow \mathbb{R}^d$ and the pressure $p:\Omega \rightarrow \mathbb{R}$. The stress tensor is given by:
\begin{align}
 \mathbf{S}(\bu,p) = \boldsymbol{\tau}(\bu)-p \mathbf{I} \quad \text{in } \Omega_i(t)
\end{align}
with viscous stress tensor:
\begin{align}
 \boldsymbol{\tau}(\bu) = 2 \mu_i \nabla^s  \bu \quad \text{in } \Omega_i(t).
\end{align}
The jump of a vector $\mathbf{v}$ is denoted as
\begin{align}
 [\![\![ \mathbf{v} ]\!]\!] = (\mathbf{v}_{|\Omega_1}-\mathbf{v}_{|\Omega_2})_{|\Gamma}.
\end{align}
The problem is augmented with appropriate boundary conditions.
We denote with $\mathbf{x}\in \Omega$ the spatial parameter and with $t \in \mathcal{T}=(0,T)$ the time with end time $T>0$. 
Furthermore, we set $\mathbf{g} = - g \boldsymbol{\jmath}$ where $g$ is the gravitational acceleration and $\boldsymbol{\jmath}$ is the vertical unit vector. The initial velocity is $\bu_0: \Omega \rightarrow \mathbb{R}^d$.
We use the standard convention for the various differential operators, i.e. the temporal derivative reads $\pd_t$ and the symmetric gradient denotes $\nabla^s\cdot=\tfrac{1}{2}\left(\nabla\cdot + \nabla^T\cdot\right)$.
The constants $\mu_i>0$ and $\rho_i>0$ denote the dynamic viscosity and density of fluid $i$ respectively.
The normal speed of $\Gamma(t)$ is denoted as $V$, the normal of $\Gamma(t)$, denoted $\boldsymbol{\nu}$, is pointing from $\Omega_2(t)$ into $\Omega_1(t)$ and the tangential vector is $\mathbf{t}$.
The curvature is $\kappa = \nabla \cdot \boldsymbol{\nu}$, i.e. $\kappa(\bx,t)$ is negative when $\Omega_1(t)$ is convex
in a neighborhood of $\bx \in \Gamma(t)$.
Furthermore, the outward-pointing normal of $\partial \Omega$ denotes $\mathbf{n}$.
We defined $u_n = \bu \cdot \mathbf{n}$ and $u_{\nu}=\bu\cdot \boldsymbol{\nu}$ as the normal velocity of $\partial \Omega$ and $\Gamma(t)$, respectively.
The equation (\ref{eq: strong mom}) represents the the balance of momentum while (\ref{eq: strong cont}) is the continuity equation.
Next, (\ref{eq: strong u continuous}) states that the velocities are continuous across the separating interface.
The fourth equation, (\ref{eq: strong stress surface tension}), stipulates that the discontinuity of the stresses at the interface is governed by surface tension.
In absence of surface tension it reduces to an equilibrium of the stresses.
Note that a direct consequence of (\ref{eq: strong stress surface tension}) is the continuity of tangential stress at the interface: 
\begin{align}\label{eq: strong tangential}
  [\![\![ 2 \mu_i (\nabla^s \bu)\boldsymbol{\nu} ]\!]\!]\cdot \mathbf{t}  = 0  \quad \text{on} & \quad \Gamma(t).
\end{align}
We assume that the surface tension coefficient $\sigma\geq 0$ is constant, i.e. Maragoni effects are precluded.
Furthermore, we assume that line force terms vanish as a result of boundary conditions or additional conditions (see also \cite{roudbari2019binary}).
We refer to  \cite{pruss2009two} for some well-posed properties of the problem.

We introduce the notation
\begin{subequations}\label{eq: rho mu}
\label{eq:weak}
\begin{align}
 \rho =&~ \rho_1 \chi_{\Omega_1(t)} + \rho_2 \chi_{\Omega_2(t)}, \label{eq: rho}\\
 \mu  =&~ \mu_1  \chi_{\Omega_1(t)} + \mu_2 \chi_{\Omega_2(t)}, \label{eq: mu}
\end{align}
\end{subequations}
with indicator $\chi_D$ of domain $D$.
System (\ref{eq: strong form per domain}) may now be written as:
\begin{subequations}\label{eq: strong form total}
\begin{align}
 \rho \left(\partial_t \bu + \bu\cdot \nabla \bu\right) - \mu \Delta \bu + \nabla p    =&~ \rho \mathbf{g} & \quad \text{in} &  \quad \Omega, \label{weak form 1 mom}\\
 \nabla \cdot \bu =&~ 0 & \quad \text{in} & \quad  \Omega, \label{weak form 1 cont} \\
[\![\![ \bu ]\!]\!]  =&~ 0 & \quad \text{on} &\quad \Gamma(t), \label{eq: jump u}\\
    [\![\![ \mathbf{S}(\bu,p)\boldsymbol{\nu} ]\!]\!]  =&~ \sigma \kappa \boldsymbol{\nu}  & \quad \text{on} & \quad \Gamma(t), \label{eq: surface tension jump}\\
    V =&~ \bu\cdot \boldsymbol{\nu} & \quad \text{on} & \quad \Gamma(t),\label{eq: interface speed V}
\end{align}
\end{subequations}
where $\boldsymbol{\tau}(\bu) \equiv 2 \mu \nabla^s  \bu$ and $\bu(\mathbf{x},0) = \bu_0(\mathbf{x}) $ in $\Omega_i(0)$ and $\Gamma(0)= \Gamma_0$ .

As we aim to develop an energy-dissipative level-set method, we first study the energy behavior of the sharp-interface model associated with system (\ref{eq: strong form total}). This is the purpose of the remainder of \cref{sec: Standard continuous formulations}. After the energy analysis in \cref{subsec: energy diss} we present a standard weak formulation of (\ref{eq: strong form total}) in \cref{subsec: standard weak}.

\subsection{Energy evolution}\label{subsec: energy diss}
We consider the dissipation of the energy of the problem (\ref{eq: strong form total}).
The total energy consists of three contributions, namely kinetic ($K$), gravitational ($G$) and surface energy ($S$):
\begin{subequations}\label{eq: total energy}
\begin{align}
\mathscr{E}(\bu)           =&~\mathscr{E}^{\text{K}}(\bu) + \mathscr{E}^{\text{G}} + \mathscr{E}^{\text{S}}, \\
  \mathscr{E}^{\text{K}}(\bu) :=&~\displaystyle\int_{\Omega} \tfrac{1}{2}\rho \|\bu\|_2^2 ~{\rm d}\Omega,\\
 \mathscr{E}^{\text{G}} :=&~ \displaystyle\int_{\Omega} \rho g y ~ {\rm d}\Omega,\\
  \mathscr{E}^{\text{S}} :=&~\displaystyle\int_{\Gamma(t)} \sigma~ {\rm d}\Gamma,
\end{align}
\end{subequations}
with $y =  \bx\cdot \boldsymbol{\jmath}$ the vertical coordinate.
\begin{thm}\label{theorem: energy dissipation strong original}
 Let $\bu$ and $p$ be smooth solutions of the incompressible Navier-Stokes equations with surface tension (\ref{eq: strong form total}) The total energy $\mathscr{E}$, given in (\ref{eq: total energy}), satisfies the dissipation inequality:
 \begin{align}\label{eq: thm2.1}
   \frac{{\rm d}}{{\rm d}t} \mathscr{E}(\bu)=- \displaystyle\int_{\Omega} \boldsymbol{\tau}(\bu):\nabla \bu ~{\rm d}\Omega + {\rm bnd} \leq 0  + {\rm bnd},
 \end{align}
 where ${\rm bnd}$ serves as a proxy for the boundary contributions.
\end{thm}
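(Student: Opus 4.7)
The plan is to differentiate each of the three contributions to $\mathscr{E}(\bu)$ separately, substitute the governing equations to create a bulk viscous dissipation, and then verify that every interface integral on $\Gamma(t)$ cancels via the jump conditions (\ref{eq: jump u}) and (\ref{eq: surface tension jump}). For the kinetic piece I apply Reynolds transport on each $\Omega_i(t)$ separately, using that the internal boundary $\Gamma(t)$ moves with normal speed $V = \bu\cdot\boldsymbol{\nu}$ by (\ref{eq: interface speed V}). Since $\rho_i$ is piecewise constant, the bulk contribution reduces to $\int_{\Omega_i}\rho_i\bu\cdot\partial_t\bu\,\mathrm{d}\Omega$, into which I substitute the momentum balance (\ref{weak form 1 mom}). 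Exploiting $\nabla\cdot\bu = 0$ recasts the convective and pressure terms as the divergences $\nabla\cdot(\tfrac{1}{2}\rho|\bu|^2\bu)$ and $\nabla\cdot(p\bu)$, and the viscous term $\bu\cdot(\nabla\cdot\boldsymbol{\tau}(\bu))$ is integrated by parts on each $\Omega_i$ to yield $-\int_\Omega\boldsymbol{\tau}(\bu):\nabla\bu\,\mathrm{d}\Omega$ together with interface flux terms.

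For the gravitational piece, $\partial_t(\rho g y)\equiv 0$ on each $\Omega_i$, so Reynolds transport contributes only an interface integral on $\Gamma(t)$. Complementarily, using $\nabla\cdot\bu=0$ within each subdomain, the gravity work $\int_\Omega \rho\bu\cdot\mathbf{g}\,\mathrm{d}\Omega$ rewrites as $\sum_i\int_{\Omega_i}\nabla\cdot(-\rho g y\bu)\,\mathrm{d}\Omega$, producing the same interface integral with opposite sign, so the two combine to pure boundary contributions. For the surface piece, I invoke the geometric transport identity for a hypersurface advected by $V\boldsymbol{\nu}$; with the paper's sign conventions ($\kappa = \nabla\cdot\boldsymbol{\nu}$, negative on convex $\Omega_1$) this gives $\frac{\mathrm{d}}{\mathrm{d}t}\mathscr{E}^{\mathrm{S}} = \sigma\int_{\Gamma(t)}\kappa V\,\mathrm{d}\Gamma$.

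Three independent interface cancellations then finish the proof. The convective-divergence contribution $\tfrac{1}{2}[\![\![\rho]\!]\!]|\bu|^2 V$ on $\Gamma$, obtained using continuity of $\bu$ from (\ref{eq: jump u}), exactly cancels the Reynolds interface term in $\frac{\mathrm{d}}{\mathrm{d}t}\mathscr{E}^{\mathrm{K}}$. The combined viscous and pressure interface integrals equal $-\int_{\Gamma(t)}\bu\cdot[\![\![\mathbf{S}(\bu,p)\boldsymbol{\nu}]\!]\!]\,\mathrm{d}\Gamma$; substituting (\ref{eq: surface tension jump}) reduces this to $-\sigma\int_{\Gamma(t)}\kappa(\bu\cdot\boldsymbol{\nu})\,\mathrm{d}\Gamma = -\frac{\mathrm{d}}{\mathrm{d}t}\mathscr{E}^{\mathrm{S}}$. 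Together with the gravity cancellation above, what survives in the bulk is $-\int_\Omega\boldsymbol{\tau}(\bu):\nabla\bu\,\mathrm{d}\Omega$, which by symmetry of $\boldsymbol{\tau}(\bu) = 2\mu\nabla^s\bu$ equals $-\int_\Omega 2\mu|\nabla^s\bu|^2\,\mathrm{d}\Omega \leq 0$; all remaining integrals live on $\partial\Omega$ and are absorbed into $\mathrm{bnd}$.

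The main obstacle is the sign bookkeeping at the interface: the jump $[\![\![\cdot]\!]\!]$ is oriented as $\Omega_1$ minus $\Omega_2$, the outward normals of $\partial\Omega_1$ and $\partial\Omega_2$ on $\Gamma$ are $-\boldsymbol{\nu}$ and $+\boldsymbol{\nu}$ respectively, and $V$, $\kappa$ each carry their own sign convention. Confirming all three cancellations requires tracking these simultaneously and relying on the continuity of $\bu$ from (\ref{eq: jump u}) to localize every jump onto $\rho$, $p$, or $\boldsymbol{\tau}(\bu)\boldsymbol{\nu}$ alone.
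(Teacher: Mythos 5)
Your proposal is correct and follows essentially the same route as the paper's proof: Leibniz--Reynolds transport on each subdomain for the kinetic and gravitational energies, the tangential-calculus transport identity for the surface energy, testing the momentum equation with $\bu$, and integration by parts on each $\Omega_i(t)$ so that the interface contributions cancel pairwise through the jump conditions (\ref{eq: jump u}) and (\ref{eq: surface tension jump}). The only difference is organizational (you phrase the argument as three separate interface cancellations, while the paper bundles the divergence-free weights into a single identity (\ref{eq: cont with weight})); the sign bookkeeping you describe is consistent with the paper's conventions.
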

\begin{proof}
To establish the dissipative property (\ref{eq: thm2.1}) we will first consider the evolution of each of the energy contributions (\ref{eq: total energy}) separately and subsequently substitute these in the strong form (\ref{eq: strong form total}).

We start off with the kinetic energy evolution. The following sequence of identities holds:
\begin{align}\label{eq: identify ekin}
  \dt\mathscr{E}^K
    =&~ \displaystyle\int_{\Omega_1(t)} \rho \bu\cdot \partial_t \bu ~{\rm d}\Omega + \displaystyle\int_{\Omega_2(t)} \rho \bu\cdot \partial_t \bu~{\rm d}\Omega\nn\\
  &~+ \displaystyle\int_{\partial \Omega_1(t) \cap \Gamma(t)} \tfrac{1}{2}\rho\|\bu\|^2 \bu\cdot\bn_1  ~{\rm d S}+ \displaystyle\int_{\partial \Omega_2(t) \cap \Gamma(t)} \tfrac{1}{2}\rho\|\bu\|^2 \bu\cdot\bn_2  ~{\rm d S}\nn\\
 =&~ \displaystyle\int_{\Omega} \rho \bu\cdot \left(\partial_t \bu+ \left(\bu\cdot \nabla\right)\bu \right)~{\rm d}\Omega +  \displaystyle\int_\Omega  \tfrac{1}{2}\rho\|\bu\|^2 \nabla \cdot \bu ~{\rm d}\Omega\nn\\
 &~- \displaystyle\int_{\partial\Omega} \tfrac{1}{2}\rho \|\bu\|^2 u_n ~{\rm d S}.
\end{align}
where $\bn_1$ and $\bn_2$ denote the outward unit normal of $\Omega_1(t)$ and $\Omega_2(t)$, respectively. 
The first identity results from the Leibniz-Reynolds transport theorem.
To obtain the second equality one adds a suitable partition of zero, subsequently applies the divergence theorem on both $\Omega_1(t)$ and $\Omega_1(t)$, and lastly uses the chain rule.

In a similar fashion we have the identities for the gravitational energy evolution:
\begin{align}\label{eq: identify egrav}
  \dt\mathscr{E}^G
  =&~ \displaystyle\int_{\partial \Omega_1(t) \cap \Gamma(t)} \rho g y \bu\cdot\bn_1  ~{\rm d S}+ \displaystyle\int_{\partial \Omega_2(t) \cap \Gamma(t)} \rho g y \bu\cdot\bn_2  ~{\rm d S}\nn\\
  =&~ \displaystyle\int_{\Omega_1(t)} \rho g \boldsymbol{\jmath}\cdot \bu  ~{\rm d}\Omega+ \displaystyle\int_{\Omega_1(t)} \rho g y \nabla \cdot \bu  ~{\rm d}\Omega\nn\\
   &~ +\displaystyle\int_{\Omega_2(t)} \rho g \boldsymbol{\jmath}\cdot \bu  ~{\rm d}\Omega+ \displaystyle\int_{\Omega_2(t)} \rho g y \nabla \cdot \bu  ~{\rm d}\Omega -\displaystyle\int_{\partial \Omega} \rho g y \bu \cdot \bn ~{\rm d S} \nn\\
  =&~ \displaystyle\int_{\Omega} \rho g \bu\cdot  \boldsymbol{\jmath} ~{\rm d}\Omega+ \displaystyle\int_{\Omega} \rho g y \nabla \cdot \bu  ~{\rm d}\Omega-\displaystyle\int_{\partial \Omega} \rho g y u_n ~{\rm d S}.
\end{align}
The first identity emanates from the Leibniz-Reynolds transport theorem and the second is a direct consequence of the divergence theorem.

Finally, we consider the energetic contribution due to surface tension. We have from the Reynolds transport theorem in tangential calculus, see e.g. \cite{sokolowski1992introduction}, the identity:
\begin{align}\label{eq: continuous energy surf}
 \dt \mathscr{E}^{\text{S}} =  \displaystyle\int_{\Gamma(t)} \sigma \kappa u_\nu~{\rm d}\Gamma - \displaystyle\int_{\partial \Gamma(t)} \sigma \bu\cdot \boldsymbol{\mathbf{\nu}}_\partial ~{\rm d}(\partial\Gamma),
\end{align}
where we recall that we do not account for Maragoni forces ($\sigma$ is constant). Here $\boldsymbol{\mathbf{\nu}}_\partial$ is the unit-normal vector to $\partial \Gamma(t)$, tangent to $\Gamma(t)$. We refer to \cite{stone1990simple,buscaglia2011variational} for alternative insightful derivations of (\ref{eq: continuous energy surf}). We discard the last member of the right-hand side of (\ref{eq: continuous energy surf}) as it represents a line force.

We multiply the momentum equation by $\bu$ and subsequently integrate over the domain:
\begin{align}\label{eq: integrate domain 0}
  \displaystyle\int_{\Omega} \bu^T \rho \left(\partial_t \bu + \bu\cdot \nabla \bu\right)~{\rm d}\Omega + \displaystyle \int_{\Omega} \bu^T \left(\nabla p - \mu \Delta \bu\right)~{\rm d}\Omega + \displaystyle \int_{\Omega} \rho g \bu \cdot \boldsymbol{\jmath} ~{\rm d}\Omega =&~ 0.
\end{align}
Considering the second expression in (\ref{eq: integrate domain 0}) in isolation we have the two identities:
\begin{align}\label{eq: integrate domain}
  \displaystyle\int_{\Omega} \bu^T \left( \nabla p - \mu \Delta \bu  \right) ~{\rm d}\Omega =&~     \displaystyle\int_{\Omega_1(t)} \bu^T \nabla \left(  p \mathbf{I}- \mu_1 \nabla^s \bu  \right) ~{\rm d}\Omega + \displaystyle\int_{\Omega_2(t)} \bu^T \nabla \left(  p \mathbf{I}- \mu_2 \nabla^s \bu  \right) ~{\rm d}\Omega
    \nn\\
   &~ + \displaystyle\int_{\Omega_1(t)} \mu_1 \bu \cdot \nabla (\nabla \cdot \bu) ~{\rm d}\Omega + \displaystyle\int_{\Omega_2(t)} \mu_2 \bu \cdot \nabla (\nabla \cdot \bu) ~{\rm d}\Omega \nn\\
   =&~     \displaystyle\int_{\Omega} \nabla\bu:  \mathbf{S}(\bu,p) ~{\rm d}\Omega   - \displaystyle\int_{\partial\Omega}    \mathbf{n}^T  \mathbf{S}(\bu,p)  \bu ~{\rm d}\Omega\nn\\
   &~+ \displaystyle\int_{\Omega} \mu \bu \cdot \nabla (\nabla \cdot \bu) ~{\rm d}\Omega+ \displaystyle\int_{\Gamma(t)} \sigma \kappa  u_\nu ~{\rm d}\Gamma.
\end{align}
The first identity follows from adding a suitable partition of zero.
For the second equality we perform integration by parts and make use of the jump (\ref{eq: surface tension jump}) where we note that on $\Gamma(t)$ we have $\bn_1 = -\boldsymbol{\nu}$ and $\bn_2=\boldsymbol{\nu}$.

We deduce from the continuity equation:
\begin{align}\label{eq: cont with weight}
 -\int_{\Omega} (p  + \tfrac{1}{2}\rho\|\bu\|_2^2+\rho g y)\nabla \cdot \bu ~ {\rm d}\Omega + \displaystyle\int_{\Omega} \mu \bu\cdot\nabla (\nabla \cdot \bu)  ~{\rm d}\Omega= 0.
\end{align}
Next, we collect the identities (\ref{eq: identify ekin}), (\ref{eq: identify egrav}), (\ref{eq: continuous energy surf}), (\ref{eq: cont with weight}) and (\ref{eq: integrate domain}), substitute these into (\ref{eq: integrate domain 0}). The first member in (\ref{eq: integrate domain 0}) cancels with the first term in the ultimate expression in (\ref{eq: identify ekin}). By virtue of (\ref{eq: integrate domain}) the second term in (\ref{eq: integrate domain 0}) drops out. The third member of (\ref{eq: integrate domain 0}) disappears due to (\ref{eq: identify egrav}). Some of the other terms in (\ref{eq: identify ekin}), (\ref{eq: identify egrav}) and (\ref{eq: integrate domain}) vanish due to (\ref{eq: continuous energy surf}) and (\ref{eq: cont with weight}). Gathering the expressions we eventually arrive at:
\begin{align}
 \dt\mathscr{E} =&~  - \displaystyle\int_{\Omega}  \boldsymbol{\tau}(\bu): \nabla  \bu ~{\rm d}\Omega + \displaystyle\int_{\partial \Omega} \bn^T \left(\mathbf{S}(\bu,p)-\left(\tfrac{1}{2}\rho\|\bu\|^2+ \rho g y \right)\mathbf{I}\right) \bu  ~{\rm d S}.
\end{align}
This completes the proof with
\begin{align}\label{eq: bnd}
 {\rm bnd} =  \displaystyle\int_{\partial \Omega} \bn^T \left(\mathbf{S}(\bu,p)-\left(\tfrac{1}{2}\rho\|\bu\|^2+ \rho g y \right)\mathbf{I}\right) \bu  ~{\rm d S} .
\end{align}

\end{proof}

\subsection{Standard weak formulation}\label{subsec: standard weak}
Recall that we suppress line force contributions as a result of boundary or auxiliary conditions. At this point we also assume homogeneous boundary conditions to increase readability of the remainder of the paper. Results can be easily extended to non-homogeneous boundary conditions.
We define $(\cdot, \cdot)_\Omega$ as the $L^2(\Omega)$ inner product on the interior and $(\cdot , \cdot)_{\Gamma}$ as the $L^2(\Gamma)$ inner product on the boundary.
We take zero-average pressures for all $t \in \mathcal{T}$. The space-time velocity-pressure function-space satisfying homogeneous boundary condition $\bu=\mathbf{0}$ denotes $\UU_T$ and the corresponding weighting function space denotes $\UU$. 
The standard conservative weak formulation corresponding to strong form (\ref{eq: strong form total}) reads:\\

\textit{Find $\left\{\bu, p \right\} \in \UU $ such that for all $\left\{\bw, q \right\} \in \UU$:}
\begin{subequations}\label{eq: weak form standard 0}
\label{eq:weak}
\begin{align}
(\bw,  \rho \left( \partial_t \bu + \bu\cdot \nabla \bu\right))_\Omega - (\nabla \cdot \bw, p)_\Omega + (\nabla  \bw, \boldsymbol{\tau}(\bu))_\Omega + \left(\bw,\sigma \kappa \boldsymbol{\nu}\right)_{\Gamma(t)}
=&~ (\bw, \rho \mathbf{g})_\Omega, \label{weak form 1 mom}\\
 ( q, \nabla \cdot \bu)_\Omega =&~ 0, \label{weak form 1 cont}
\end{align}
\end{subequations}
with interface speed $V = \bu\cdot \boldsymbol{\nu}$. 
The weak formulation (\ref{eq: weak form standard 0}) is equivalent to the strong form (\ref{eq: strong form total}) for smooth solutions and the associated energy evolution relation coincides with that of the strong form (\ref{eq: strong form total}).
\begin{rmk}
   To show the energy evolution for the case of non-homogeneous boundary conditions one may enforce boundary conditions with a Lagrange multiplier construction \cite{HuEnMaLa00, HuWe05, EiAk17i, EiAk17ii} and subsequently use (\ref{eq: continuous energy surf}) to identify the surface energy contribution. 
\end{rmk}
\begin{rmk}\label{rmk: alternative form}
  In order to avoid evaluating second-derivatives the alternative form $+(\nabla \bw,\sigma \mathbf{P}_T)_\Gamma$ for the surface tension term in (\ref{eq: weak form standard 0}) with tangential projection $\mathbf{P}_T = \mathbf{I} - \boldsymbol{\nu}\otimes\boldsymbol{\nu}$ may be used. In \ref{sec: Surface tension evaluation} we provide the derivation of this alternative form.
\end{rmk}

\section{Diffuse-interface level-set model}\label{sec: diffuse}
In this Section we present the diffuse-interface level-set model and analyze its energy behavior.
To do so, in \cref{subsec: LS} we provide the level-set formulation of (\ref{eq: strong form total}) which we subsequently present in non-dimensional form \cref{subsec: non-dim}. Then in \cref{subsec: reg} we regularize the sharp-interface level-set formulation to obtain the diffuse-interface model. We conclude with a detailed study of the energy behavior of this level-set formulation in \cref{subsec: energy evo}.

\subsection{Sharp-interface level-set formulation}\label{subsec: LS}
We employ the interface capturing level-set method to reformulate model (\ref{eq: weak form standard 0}).
To this purpose we introduce the level-set function $\phi: \Omega(t) \rightarrow \mathbb{R}$ to describe the evolution of the interface $\Gamma(t)$.
The sub-domains and interface are identified as:
\begin{subequations}\label{eq: LS}
\begin{align}
 \Omega_1(t) \equiv &~ \left\{ \mathbf{x} \in \Omega(t) | \phi (\mathbf{x},t) > 0 \right\}, \\
 \Omega_2(t) \equiv &~ \left\{ \mathbf{x} \in \Omega(t) | \phi (\mathbf{x},t) < 0 \right\}, \\
 \Gamma(t)   \equiv &~ \left\{ \mathbf{x} \in \Omega(t) | \phi (\mathbf{x},t) = 0 \right\}. 
\end{align}
\end{subequations}
The motion of the interface $\Gamma(t)$ is governed by pure convection: 
\begin{align}
 \partial_t \phi + \bu \cdot \nabla \phi =  \partial_t \phi + V \|\nabla \phi\|  = 0.
\end{align}
This results from taking the temporal derivative of the zero-level set.
The domain indicator may now be written as: 
\begin{subequations}\label{eq: domain indicator Heaviside}
\begin{align}
  \chi_{\Omega_1} =&~ H(\phi), \\
  \chi_{\Omega_2} =&~ 1-H(\phi), 
\end{align}
\end{subequations}
where $H$ is the Heaviside function with the half-maximum convention:
\begin{align}\label{eq: Heaviside sharp}
 H(\phi) = \left\{ \begin{matrix} 0 & \phi <0 \\
                                  \frac{1}{2} & \phi =0 \\
                                  1 & \phi>0.
                   \end{matrix}
\right.
\end{align}
The resulting density and fluid viscosity are:
\begin{subequations}\label{eq: rho mu LS}
\label{eq:weak}
\begin{align}
 \rho(\phi) =&~ \rho_1 H(\phi) + \rho_2 (1-H(\phi)), \label{eq: rho H}\\
 \mu(\phi)  =&~ \mu_1  H(\phi) + \mu_2 (1-H(\phi)), \label{eq: mu H}
\end{align}
\end{subequations}
and the viscous stress now depends on $\bu$ and $\phi$:
\begin{align}
  \boldsymbol{\tau}(\bu, \phi) = 2 \mu(\phi) \nabla^s \bu. 
\end{align}
In order to write the surface term in (\ref{eq: weak form standard 0}) in the level-set context we need expressions for the surface normal, the curvature and require to convert the surface integral into a domain integral. This is how we proceed. We first define the regularized $2$-norm $\|\cdot \|_{\epsilon,2} : \mathbb{R} \rightarrow \mathbb{R}_+$ for dimensionless $\mathbf{b}\in\mathbb{R}^d$ and $\epsilon\geq 0$ as:
\begin{align}
  \|\mathbf{b} \|_{\epsilon,2}^2 = \mathbf{b}\cdot \mathbf{b} + \epsilon^2.
\end{align}
The surface normal is now continuously extended into the domain via
\begin{align}\label{eq: surface normal domain}
  \hat{\boldsymbol{\nu}}(\phi) := \dfrac{\nabla \phi}{\|\nabla \phi\|_{\epsilon,2}}.
\end{align}
The curvature results from taking the divergence of (\ref{eq: surface normal domain}):
\begin{align}
 \hat{\kappa}(\phi) \equiv \nabla \cdot \hat{\boldsymbol{\nu}}  = \nabla \cdot \left(\frac{\nabla \phi}{\|\nabla \phi\|_{\epsilon,2}}\right).
\end{align}
We may now convert the surface integral into
\begin{align}\label{eq: surf int}
    \displaystyle\int_{\Gamma(t)} \sigma \bw\cdot \boldsymbol{\nu}~  \kappa  ~{\rm d} \Gamma =&~  \displaystyle\int_\Omega \sigma \bw\cdot \hat{\boldsymbol{\nu}}(\phi)~  \hat{\kappa}(\phi)  \delta_{\Gamma}(\phi) ~{\rm d} \Omega.
\end{align}
Here $\delta_{\Gamma}=\delta_{\Gamma}(\phi)$ denotes the Dirac delta concentrated on the interface $\Gamma(t)$:
\begin{align}\label{eq: delta gamma}
 \delta_{\Gamma}(\phi) = \delta(\phi) \|\nabla \phi \|_{\epsilon,2}.
\end{align}
which extends the integral over boundary $\Gamma(t)$ to the domain $\Omega$ \cite{osher2001level}. In (\ref{eq: delta gamma}) $\delta(\phi)$ represents the Dirac delta distribution.
The expression in (\ref{eq: surf int}) is exact for $\epsilon=0$ and an approximation otherwise. We refer to Chang et al. \cite{chang1996level} for an insightful derivation. For more rigorous details the reader may consult \cite{hormander2015analysis}. Note that we have suppressed $\epsilon$ in (\ref{eq: surface normal domain})-(\ref{eq: delta gamma}).
The corresponding strong form writes in terms of the variables $\bu, p$ and $\phi$ as:
\begin{subequations}\label{eq: strong form phi}
\begin{align}
  \rho(\phi)  \left( \partial_t \bu+\bu\cdot\nabla\bu\right)  - \nabla \cdot \boldsymbol{\tau}(\bu, \phi) + \nabla p 
  + \sigma \delta_{\Gamma}(\phi) \hat{\kappa}(\phi) \hat{\boldsymbol{\nu}}(\phi) -\rho(\phi) \mathbf{g}&=~0 , \\
 \nabla \cdot \bu &=~ 0, \\
 \partial_t \phi + \bu \cdot \nabla \phi &=~0,
\end{align}
\end{subequations}
with $\bu(0) = \bu_0$ and $\phi(0)=\phi_0$ in $\Omega$. From this point onward we skip the hat symbols for simplicity.

\subsection{Non-dimensionalization}\label{subsec: non-dim}
  We now perform the non-dimensionalization of the incompressible Navier-Stokes equations with surface tension.
  Here we re-scale the system (\ref{eq: strong form total}) based on physical variables.
  The dimensionless variables are given by:
  \begin{align}
    \mathbf{x}^* =&~ \frac{\mathbf{x}}{L_0}, \quad \bu^* = \frac{\bu}{U_0}, \quad t^* = \frac{t U_0}{L_0}, \quad \rho^* = \frac{\rho}{\rho_1},  \quad \mu^* = \frac{\mu}{\mu_1}, \quad \phi^* =~ \frac{\phi}{L_0}, \quad p^* = \frac{p}{\rho_1 U_0^2},
  \end{align}
where $L_0$ is a characteristic length scale and $U_0$ is a characteristic velocity. A direct consequence is
  \begin{subequations}
\begin{align}
    \kappa^*(\phi^*) :=&~ \nabla^* \cdot\left(\dfrac{\nabla^* \phi^*}{\|\nabla^* \phi^*\|_{\epsilon,2}}\right)= L_0\kappa(\phi) , \\ \delta_\Gamma^*(\phi^*):=&~ \delta(\phi^*)\|\nabla^* \phi^*\|_{\epsilon,2} = L_0\delta_\Gamma(\phi) ,
  \end{align}
\end{subequations}
where we have used the scaling property of the Dirac delta:
\begin{align}
  \delta(\upalpha \phi) = \dfrac{1}{|\upalpha|}\delta(\phi), \quad \upalpha \neq 0.
\end{align}
The dimensionless system reads:
\begin{subequations}\label{eq: non-dimensional}
\begin{align}
     \rho^*(\phi^*) \left( \partial_{t^*} \bu^* +  \bu^* \cdot \nabla^*  \bu^*\right)  - \nabla^* \cdot \boldsymbol{\tau}^*(\bu^*, \phi^*) + \nabla^* p^* {\color{white}dawd}\nn\\
     + \frac{1}{\mathbb{W}{\rm e}} \delta^*_{\Gamma}(\phi^*) \kappa^*(\phi^*) \boldsymbol{\nu^*}(\phi^*) +\frac{1}{\mathbb{F}{\rm r}^2}\rho^*(\phi^*) \boldsymbol{\jmath}=~ 0, \\
 \nabla^* \cdot \bu^* =~ 0, \\
 \partial_{t^*} \phi^* + \bu^* \cdot \nabla^* \phi^* =~0,
\end{align}
\end{subequations}
where dimensionless viscous stress is given by:
\begin{align}
  \boldsymbol{\tau}^*=\boldsymbol{\tau}^*(\bu^*, \phi^*) =\frac{1}{\mathbb{R}{\rm e}}\mu^*(\phi^*) \left( \nabla^*\bu^* + {\nabla^*}^T\bu^*\right).
\end{align}
The used dimensionless coefficients are the Reynolds number (Re) which expresses relative strength of inertial forces and viscous forces, the Weber number (We) measuring the ratio of inertia to surface tension and the Froude number (Fr) which quantifies inertia with respect to gravity. The expressions are given by:
\begin{subequations}\label{eq: dimensionless quantities}
\begin{align}
     \mathbb{R}{\rm e} =&~ \frac{\rho_1 U_0 L_0}{\mu_1},\\
     \mathbb{W}{\rm e} =&~ \frac{\rho_1 U_0^2 L_0}{\sigma},\\
     \mathbb{F}{\rm r} =&~ \frac{U_0}{\sqrt{g L_0}}.     
\end{align}
\end{subequations}
\begin{rmk}
Other related dimensionless numbers are the Ohnesorge number $\mathbb{O}{\rm h} = \mathbb{W}{\rm e}^{1/2}/\mathbb{R}{\rm e}$, the capillarity number $\mathbb{C}{\rm a} = \mathbb{W}{\rm e}/\mathbb{R}{\rm e}$ and the E\"{o}tv\"{o}s number $\mathbb{E}{\rm o}=\mathbb{W}{\rm e}/\mathbb{F}{\rm r}^2$.
\end{rmk}
We supress the star symbols in the remainder of this paper.

\subsection{Regularization}\label{subsec: reg}
In the following we smear the interface over an interface-width of $\eps>0$ via replacing the (sharp) Heaviside function (\ref{eq: Heaviside sharp}) by a regularized differentiable Heaviside $H_\eps(\phi)$. We postpone the specific form of $H_\eps(\phi)$ to \cref{sec: Time-discrete formulation}. The regularized delta function is $\delta_{\Gamma,\eps}(\phi) = \delta_\eps(\phi) \|\nabla \phi\|_{\epsilon,2}$ with one-dimensional continuous regularized delta function $\delta_\eps(\phi) = H_\eps'(\phi)$. We refer to \cite{kublik2016integration} for details concerning the approximation of the Dirac delta.
The density and fluid viscosity are computed as
\begin{subequations}\label{eq: rho mu LS approximate 0}
\label{eq:weak}
\begin{align}
 \rho_\eps \equiv \rho_\eps(\phi) :=&~ \rho_1 H_\eps(\phi) + \rho_2 (1-H_\eps(\phi)), \label{eq: rho H eps}\\
 \mu_\eps  \equiv \mu_\eps(\phi)  :=&~ \mu_1  H_\eps(\phi) + \mu_2 (1-H_\eps(\phi)). \label{eq: mu H eps}
\end{align}
\end{subequations}

Our procedure to arrive at an energy-dissipative formulation, presented in \cref{sec: Energy-dissipative formulation}, requires a conservative model. Using the continuity and level-set equation, the associated conservative model follows straightforwardly:
\begin{subequations}\label{eq: strong form phi conservative}
\begin{align}
     \partial_t (\rho_\eps(\phi)\bu) +  \nabla \cdot \left( \rho_\eps(\phi) \bu\otimes \bu\right)  - \nabla \cdot \boldsymbol{\tau}_\eps(\bu, \phi) + \nabla p + \frac{1}{\mathbb{W}{\rm e}} \delta_{\Gamma,\eps}(\phi) \kappa (\phi)\boldsymbol{\nu}(\phi) +\frac{1}{\mathbb{F}{\rm r}^2}\rho_\eps(\phi) \boldsymbol{\jmath}&=~ 0, \label{eq: strong form phi conservative mom} \\
 \nabla \cdot \bu &=~ 0, \label{eq: strong form phi conservative cont} \\
 \partial_t \phi + \bu \cdot \nabla \phi &=~0,\label{eq: strong form phi conservative LS}
\end{align}
\end{subequations}
where $\boldsymbol{\tau}_\eps(\bu, \phi) = 2 \mu_\eps(\phi) \nabla^s \bu$ and $\bu(0) = \bu_0$ and $\phi(0)=\phi_0$ in $\Omega$. At this point we have assumed a constant interface width $\eps$. In the following we omit the $\eps$ for the sake of notational simplicity.
\begin{rmk}
 In case of a non-constant $\epsilon$ one requires to augment the right-hand side of (\ref{eq: strong form phi conservative mom}) with $\partial \rho_\eps/\partial \eps \left(\partial_t \eps + \bu \cdot \nabla \eps\right)$.
\end{rmk}
\begin{rmk}
  At this point we remark that as an alternative one may also employ a skew-symmetric form for the convective terms.
  Via a partial integration step,
\begin{align}\label{eq: skew}
 (\bw,  \nabla \cdot (\rho \bu \otimes \bu))_\Omega =\tfrac{1}{2}(\bw, \rho \bu \cdot \nabla \bu)_\Omega -\tfrac{1}{2}(\nabla \bw, \rho \bu \otimes \bu)_\Omega+\tfrac{1}{2}(\bw,\bu\bu\cdot \nabla \rho)_\Omega+\tfrac{1}{2}(\bw,\rho \bu \nabla \cdot\bu)_\Omega,
\end{align}
we may replace the convective term in (\ref{eq: strong form phi conservative}) by the first three terms on the right-hand side of (\ref{eq: skew}).
In the current situation the specific form of the convective terms (conservative or skew-symmetric) is not essential. This changes when the formulation is equipped with multiscale stabilization terms. In the single-fluid case (in absence of surface tension) the well-known multiscale discretization that represents an energy-stable system is the skew-symmetric form, see e.g. \cite{EiAk17i, EiAk17ii, evans2020variational}. In contrast to the current two-phase model, this property is for the single-fluid case directly inherited by the fully-discrete case when employing the mid-point rule for time integration. 
\end{rmk}

\subsection{Energy evolution}\label{subsec: energy evo}
In the following we show the energy balance of the level-set formulation (\ref{eq: strong form phi conservative}). 
The kinetic, gravitational and surface energy associated with system (\ref{eq: strong form phi conservative}) are:
\begin{subequations}
\begin{align}
 \mathscr{E}^{\text{K}}(\bu, \phi) :=&~\left( \tfrac{1}{2}\rho(\phi) \bu, \bu \right)_\Omega,\\
 \mathscr{E}^{\text{G}}(\phi) :=&~ \frac{1}{\mathbb{F}{\rm r}^2}\left( \rho(\phi), y\right)_\Omega,\\
 \mathscr{E}^{\text{S}}(\phi) :=&~\frac{1}{\mathbb{W}{\rm e}}\left( 1,  \delta_{\Gamma}(\phi)\right)_\Omega.
\end{align}
\end{subequations}
The total energy is the superposition of the separate energies:
\begin{align}\label{eq: total energy LS}
 \mathscr{E}(\bu, \phi) :=&~\mathscr{E}^{\text{K}}(\bu, \phi)+ \mathscr{E}^{\text{G}}(\phi) + \mathscr{E}^{\text{S}}(\phi).
\end{align}
The local energy is given by:
  \begin{align}\label{eq: loc energy}
 \mathcal{H} = \tfrac{1}{2} \rho(\phi) \|\bu \|_2^2 + \frac{1}{\mathbb{F}{\rm r}^2} \rho(\phi) y + \frac{1}{\mathbb{W}{\rm e}} \delta_\Gamma(\phi).
   \end{align}
We present the local energy balance and subsequently the global balance.
To that purpose we first need to introduce some notation and Lemmas associated with the surface energy.
Let us define the normal projection operator:
\begin{align}
 \mathbf{P}_N(\phi) = \frac{\nabla \phi}{\|\nabla \phi \|_{\epsilon,2}} \otimes \frac{\nabla \phi}{\|\nabla \phi \|_{\epsilon,2}}.
\end{align}
and the tangential projection operator:
\begin{align}
 \mathbf{P}_T(\phi) =&~ \mathbf{I} - \mathbf{P}_N(\phi).
\end{align}
The associated gradient operators are the gradient along the direction normal to the interface:
\begin{align}
 \nabla_N =&~ \mathbf{P}_N(\phi) \nabla.
\end{align}
and the gradient tangent to the interface:
\begin{align}
  \nabla_\Gamma =&~ \mathbf{P}_T(\phi) \nabla = \nabla - \nabla_N.
\end{align}
\begin{lem}\label{lem: norm grad phi}
 The term $\|\nabla \phi \|_{\epsilon,2}$ evolves in time according to:
 \begin{align}\label{eq: VE 1}
   \partial_t \|\nabla \phi \|_{\epsilon,2} + \nabla \cdot \left( \|\nabla \phi \|_{\epsilon,2}\bu\right) - \|\nabla \phi \|_{\epsilon,2} \nabla_\Gamma \bu  = 0.
 \end{align}
\end{lem}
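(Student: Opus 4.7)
The plan is to derive the identity directly from the level-set transport equation \eqref{eq: strong form phi conservative LS} by taking a gradient and contracting with $\nabla\phi$. I would first take $\nabla$ of $\partial_t\phi + \bu\cdot\nabla\phi = 0$, producing the pointwise identity $\partial_t\nabla\phi + (\nabla\bu)^T\nabla\phi + (\bu\cdot\nabla)\nabla\phi = 0$ (indices: $\partial_t\partial_i\phi + (\partial_i u_k)(\partial_k\phi) + u_k\partial_k\partial_i\phi = 0$).

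Next I would take the inner product of this vector equation with $\nabla\phi$ and use the chain-rule identities $\tfrac12\partial_t(\nabla\phi\cdot\nabla\phi) = \|\nabla\phi\|_{\epsilon,2}\,\partial_t\|\nabla\phi\|_{\epsilon,2}$ and $\tfrac12\bu\cdot\nabla(\nabla\phi\cdot\nabla\phi) = \|\nabla\phi\|_{\epsilon,2}\,\bu\cdot\nabla\|\nabla\phi\|_{\epsilon,2}$ (both valid because $\|\nabla\phi\|_{\epsilon,2}^2 = \nabla\phi\cdot\nabla\phi + \epsilon^2$ with $\epsilon$ constant in space and time). After dividing through by $\|\nabla\phi\|_{\epsilon,2}$, I obtain
\begin{align*}
\partial_t\|\nabla\phi\|_{\epsilon,2} + \bu\cdot\nabla\|\nabla\phi\|_{\epsilon,2} + \frac{(\partial_i\phi)(\partial_i u_k)(\partial_k\phi)}{\|\nabla\phi\|_{\epsilon,2}} = 0.
\end{align*}

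The remaining step is to recognize the last term as a projection onto the normal direction. By the definition of $\mathbf{P}_N(\phi)$, the fraction equals $\|\nabla\phi\|_{\epsilon,2}\,\mathbf{P}_N(\phi):\nabla\bu$. Using the complementary decomposition $\mathbf{P}_N = \mathbf{I}-\mathbf{P}_T$ gives $\mathbf{P}_N:\nabla\bu = \nabla\cdot\bu - \mathbf{P}_T:\nabla\bu$, which I read as $\nabla\cdot\bu - \nabla_\Gamma\bu$ in the paper's notation for the tangential divergence. Substituting and combining $\|\nabla\phi\|_{\epsilon,2}\nabla\cdot\bu + \bu\cdot\nabla\|\nabla\phi\|_{\epsilon,2} = \nabla\cdot(\|\nabla\phi\|_{\epsilon,2}\bu)$ via the product rule yields exactly \eqref{eq: VE 1}.

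The only mildly tricky step is the projection-algebra identification of the $(\partial_i\phi)(\partial_i u_k)(\partial_k\phi)/\|\nabla\phi\|_{\epsilon,2}$ term; everything else is straightforward transport calculus. It is worth noting that incompressibility is not used, so the identity is purely kinematic — a feature that will be convenient when this lemma is invoked inside the energy balance for $\mathscr{E}^{\text{S}}$.
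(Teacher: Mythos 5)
Your proof is correct and follows essentially the same route as the paper's: the paper takes the gradient of the level-set equation and contracts it directly with the unit normal $\boldsymbol{\nu}(\phi)=\nabla\phi/\|\nabla\phi\|_{\epsilon,2}$, whereas you contract with $\nabla\phi$ and divide by $\|\nabla\phi\|_{\epsilon,2}$ afterwards — an algebraically identical manoeuvre — and both arguments then identify the remaining term as $\|\nabla\phi\|_{\epsilon,2}\,\mathbf{P}_N(\phi):\nabla\bu$ and add the same partition of zero to reach the divergence form. Your closing observation that the identity is purely kinematic (no use of $\nabla\cdot\bu=0$) is accurate and consistent with the paper's proof.
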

\begin{proof}
This follows when evaluating the normal derivative of the level-set equation. Taking the gradient of the level-set equation and subsequently evaluating the inner product of the result with $\boldsymbol{\nu}(\phi)$ yields:
\begin{align}
 \dfrac{\nabla \phi}{\|\nabla \phi\|_{\epsilon,2}}\cdot \nabla \left( \partial_t \phi + \bu \cdot \nabla \phi \right) = 0.
\end{align}
Applying the gradient operator to each of the members provides
\begin{align}\label{eq: apply grad}
 \dfrac{\nabla \phi}{\|\nabla \phi\|_{\epsilon,2}}\cdot \nabla \left( \partial_t \phi \right)
 +\bu\cdot \left(\nabla \left(  \nabla \phi \right)\dfrac{\nabla \phi}{\|\nabla \phi\|_{\epsilon,2}}\right)    + \nabla  \bu:\left(\dfrac{\nabla \phi}{\|\nabla \phi\|_{\epsilon,2}}\otimes\nabla \phi\right) = 0.
\end{align}
The first term in (\ref{eq: apply grad}) coincides with the first member in expression (\ref{eq: VE 1}). For the second term in (\ref{eq: apply grad}) we note that the term in brackets equals the gradient of $\|\nabla \phi \|_{\epsilon,2}$. Finally, one recognizes the normal projection operator in the latter term of (\ref{eq: apply grad}). This delivers:
\begin{align}
   \partial_t \|\nabla \phi \|_{\epsilon,2} + \bu\cdot \nabla  \|\nabla \phi \|_{\epsilon,2} + \|\nabla \phi \|_{\epsilon,2} \nabla_N \bu  = 0.
\end{align}
Adding a suitable partition of zero completes the proof.
 \end{proof}
\begin{rmk}
  The evolution (\ref{eq: VE 1}) may be linked to the recently proposed variation entropy theory \cite{ten2019variation}.
  Variation entropy is local continuous generalization of the celebrated TVD (total variation diminishing) property derived from entropy principles.
  It serves as a derivation of discontinuity capturing mechanisms \cite{ten2020theoretical}.
  Using the continuity equation (\ref{eq: strong form phi conservative cont}) we obtain an alternative form of (\ref{eq: VE 1}):
    \begin{align}\label{eq: alternative form}
    \partial_t \eta(\nabla \phi) + \nabla \cdot \left( \eta(\nabla \phi) \dfrac{\partial \mathbf{f}}{ \partial \phi}\right) + \eta(\nabla \phi) \nabla_N \dfrac{\partial \mathbf{f}}{ \partial \phi}  =0,
  \end{align}
  with $\eta(\nabla \phi) = \|\nabla \phi \|_{\epsilon,2}$ and $\mathbf{f}(\phi, \bu) = \mathbf{u}\phi$. In the stationary case, i.e. when the term $\nabla_N (\partial \mathbf{f}/ \partial \phi)$ is absent, relation (\ref{eq: alternative form}) represents the evolution of variation entropy $\eta(\nabla \phi)$. This occurs when the velocity normal to the interface is constant.
\end{rmk}
 \begin{lem}\label{lem: surface Dirac}
 The surface Dirac $\delta_\Gamma(\phi)$ evolves in time according to:
 \begin{align}\label{eq: surface Dirac}
   \partial_t \delta_\Gamma(\phi) + \nabla \cdot \left( \delta_\Gamma(\phi) \bu \right)  - \delta_\Gamma(\phi) \nabla_\Gamma \bu  = 0.
 \end{align}
\end{lem}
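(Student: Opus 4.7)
The plan is to proceed by a direct computation, using the product structure of $\delta_\Gamma(\phi) = \delta(\phi)\|\nabla \phi\|_{\epsilon,2}$ together with the level-set equation (\ref{eq: strong form phi conservative LS}) and the previously established \Cref{lem: norm grad phi}. Since the surface Dirac is treated as a limit of smooth regularizations $\delta_\eps(\phi) = H_\eps'(\phi)$ (as introduced in \cref{subsec: reg}), the chain rule on $\delta(\phi)$ is justified at the regularized level and we may manipulate it formally as a smooth function of $\phi$.

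First, I would apply the product rule to $\partial_t \delta_\Gamma(\phi)$:
\begin{align*}
  \partial_t \delta_\Gamma(\phi) = \delta'(\phi)\,\partial_t\phi\,\|\nabla\phi\|_{\epsilon,2} + \delta(\phi)\,\partial_t \|\nabla\phi\|_{\epsilon,2}.
\end{align*}
Then I would substitute $\partial_t \phi = -\bu\cdot\nabla\phi$ from the level-set equation into the first term, and invoke \Cref{lem: norm grad phi} to rewrite the second term as $-\delta(\phi)\nabla\cdot(\|\nabla\phi\|_{\epsilon,2}\bu) + \delta(\phi)\|\nabla\phi\|_{\epsilon,2}\nabla_\Gamma\bu$.

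Next, I would expand the flux term by the product rule:
\begin{align*}
  \nabla\cdot\bigl(\delta_\Gamma(\phi)\bu\bigr) = \delta'(\phi)(\nabla\phi\cdot\bu)\|\nabla\phi\|_{\epsilon,2} + \delta(\phi)\,\nabla\cdot(\|\nabla\phi\|_{\epsilon,2}\bu).
\end{align*}
Adding this to the expression for $\partial_t \delta_\Gamma(\phi)$ above, the two $\delta'(\phi)$ contributions cancel exactly, and the two $\delta(\phi)\nabla\cdot(\|\nabla\phi\|_{\epsilon,2}\bu)$ terms cancel as well. What survives is precisely $\delta(\phi)\|\nabla\phi\|_{\epsilon,2}\nabla_\Gamma\bu = \delta_\Gamma(\phi)\nabla_\Gamma\bu$, which upon rearrangement yields the claimed identity (\ref{eq: surface Dirac}).

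The only subtle point is handling $\delta'(\phi)$ rigorously, since $\delta$ is a distribution. I would sidestep this by viewing the identity at the regularized level (replacing $\delta$ by $\delta_\eps$), where all manipulations are classical, and then passing to the limit; this is consistent with the regularized framework already adopted in \cref{subsec: reg}. Apart from that, the proof is a purely algebraic bookkeeping exercise, and \Cref{lem: norm grad phi} does the heavy lifting by supplying the transport equation for $\|\nabla\phi\|_{\epsilon,2}$.
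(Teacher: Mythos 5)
Your proposal is correct and is essentially the paper's own argument: the paper likewise multiplies the level-set equation by $\delta'(\phi)$ to obtain a transport equation for $\delta(\phi)$ and then superposes $\delta(\phi)$ times \cref{lem: norm grad phi} with $\|\nabla\phi\|_{\epsilon,2}$ times that transport equation, which is the same product-rule bookkeeping you carry out. Your remark about working at the regularized level $\delta_\eps=H_\eps'$ is consistent with the paper's framework and adds nothing that conflicts with it.
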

\begin{proof}
Multiplying the level-set equation by $\delta'(\phi)$ provides:
 \begin{align}\label{eq: dirac 1D}
  \partial_t \delta(\phi) + \mathbf{u}\cdot \nabla \delta(\phi) =0.
 \end{align}
The superposition of (\ref{eq: VE 1}) multiplied by $\delta(\phi)$ and (\ref{eq: dirac 1D}) multiplied by $\|\nabla \phi\|_{\epsilon,2}$ provides the result.
In other words, the operator 
\begin{align}\label{eq: operator}
  \delta(\phi) \dfrac{\nabla \phi}{\|\nabla \phi \|_{\epsilon,2}} \cdot \nabla + \|\nabla \phi\|_{\epsilon,2} \delta'(\phi) \mathcal{I},
\end{align}
in which $\mathcal{I}$ denotes the identity operator, applied to the level-set equation delivers the evolution of the surface Dirac (\ref{eq: surface Dirac}).
\end{proof}
To derive the local energy balance we introduce the following identity.
\begin{prop}\label{prop: id1 main}
It holds:
  \begin{align}
  -\nabla \cdot \left( \mathbf{P}_T(\phi) \delta_\Gamma(\phi) \right) = \delta_\Gamma(\phi)\boldsymbol{\nu}(\phi) \kappa(\phi)- \epsilon^2\delta'(\phi)\boldsymbol{\nu}(\phi).
  \end{align}
\end{prop}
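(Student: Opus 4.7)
I would begin from the split
\[
\mathbf{P}_T(\phi)\delta_\Gamma(\phi) = \delta_\Gamma(\phi)\mathbf{I} - \delta_\Gamma(\phi)\,\boldsymbol{\nu}(\phi)\otimes\boldsymbol{\nu}(\phi),
\]
and take the divergence using the standard product rule for a scalar times a rank-one tensor. Writing $a := \|\nabla\phi\|_{\epsilon,2}$ for brevity, this produces
\[
\nabla\cdot(\mathbf{P}_T\delta_\Gamma) = \nabla\delta_\Gamma - \boldsymbol{\nu}\,\nabla\cdot(\delta_\Gamma\boldsymbol{\nu}) - \delta_\Gamma(\boldsymbol{\nu}\cdot\nabla)\boldsymbol{\nu},
\]
where $\nabla\cdot(\delta_\Gamma\boldsymbol{\nu}) = \boldsymbol{\nu}\cdot\nabla\delta_\Gamma + \delta_\Gamma\,\kappa$. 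Rearranging yields
\[
-\nabla\cdot(\mathbf{P}_T\delta_\Gamma) = \delta_\Gamma\,\boldsymbol{\nu}\,\kappa \;-\; \mathbf{P}_T\nabla\delta_\Gamma \;+\; \delta_\Gamma(\boldsymbol{\nu}\cdot\nabla)\boldsymbol{\nu},
\]
so that the first term on the right already matches. It remains to prove that the last two contributions collapse to $-\epsilon^2\delta'(\phi)\boldsymbol{\nu}$.

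For this I would compute each of the two pieces explicitly using $\nabla\phi = a\boldsymbol{\nu}$ and $\nabla a = (\nabla^2\phi)\boldsymbol{\nu}$ (the latter follows by differentiating $a^2 = \nabla\phi\cdot\nabla\phi + \epsilon^2$). The chain rule then gives
\[
\nabla\delta_\Gamma = a\,\delta'(\phi)\nabla\phi + \delta(\phi)\nabla a = a^2\delta'(\phi)\boldsymbol{\nu} + \delta(\phi)(\nabla^2\phi)\boldsymbol{\nu},
\]
while a direct calculation of $(\boldsymbol{\nu}\cdot\nabla)\boldsymbol{\nu}$ (again using $\nabla a = (\nabla^2\phi)\boldsymbol{\nu}$) yields
\[
\delta_\Gamma(\boldsymbol{\nu}\cdot\nabla)\boldsymbol{\nu} = \delta(\phi)\,\mathbf{P}_T(\nabla^2\phi)\boldsymbol{\nu}.
\]

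The key subtlety, which I expect to be the main obstacle because it is easy to overlook, is that $\mathbf{P}_T\boldsymbol{\nu}$ does \emph{not} vanish in the regularized setting. Indeed
\[
\mathbf{P}_T\boldsymbol{\nu} = \boldsymbol{\nu} - \|\boldsymbol{\nu}\|^2\boldsymbol{\nu} = \bigl(1 - (a^2-\epsilon^2)/a^2\bigr)\boldsymbol{\nu} = \tfrac{\epsilon^2}{a^2}\boldsymbol{\nu},
\]
so applying $\mathbf{P}_T$ to the expression for $\nabla\delta_\Gamma$ produces the genuinely new contribution $a^2\delta'(\phi)\cdot(\epsilon^2/a^2)\boldsymbol{\nu} = \epsilon^2\delta'(\phi)\boldsymbol{\nu}$, on top of $\delta(\phi)\mathbf{P}_T(\nabla^2\phi)\boldsymbol{\nu}$. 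Substituting,
\[
-\mathbf{P}_T\nabla\delta_\Gamma + \delta_\Gamma(\boldsymbol{\nu}\cdot\nabla)\boldsymbol{\nu}
= -\epsilon^2\delta'(\phi)\boldsymbol{\nu} - \delta(\phi)\mathbf{P}_T(\nabla^2\phi)\boldsymbol{\nu} + \delta(\phi)\mathbf{P}_T(\nabla^2\phi)\boldsymbol{\nu}
= -\epsilon^2\delta'(\phi)\boldsymbol{\nu},
\]
which combines with $\delta_\Gamma\boldsymbol{\nu}\kappa$ to give the claimed identity. As a sanity check I would also verify the result by a direct index-notation computation of $\partial_j[\delta(\phi)a\,\delta_{ij} - \delta(\phi)(\partial_i\phi)(\partial_j\phi)/a]$, where the $\epsilon^2\delta'(\phi)\nu_i$ term emerges from the difference $a^2 - \|\nabla\phi\|^2 = \epsilon^2$ and the remaining Hessian terms reassemble into $-a\kappa = \boldsymbol{\nu}^T(\nabla^2\phi)\boldsymbol{\nu} - \Delta\phi$ via the definition $\kappa = \nabla\cdot(\nabla\phi/a)$.
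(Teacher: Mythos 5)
Your proof is correct and follows essentially the same route as the paper's (\cref{prop: id1} in \ref{appendix: surface tension derivation LS}): a direct product-rule computation of $\nabla\cdot(\mathbf{P}_T\delta_\Gamma)$ in which the $\epsilon^2\delta'(\phi)\boldsymbol{\nu}$ term arises from the discrepancy $\|\nabla\phi\|_{\epsilon,2}^2-\|\nabla\phi\|^2=\epsilon^2$, i.e.\ from $\mathbf{P}_T\boldsymbol{\nu}=(\epsilon^2/\|\nabla\phi\|_{\epsilon,2}^2)\boldsymbol{\nu}\neq\mathbf{0}$. The only difference is organizational: you work coordinate-free with the split $\mathbf{I}-\boldsymbol{\nu}\otimes\boldsymbol{\nu}$ and cancel the Hessian contributions $\delta(\phi)\mathbf{P}_T(\nabla^2\phi)\boldsymbol{\nu}$ explicitly, whereas the paper carries out the same cancellation in index notation.
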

\begin{proof}
 See \ref{appendix: surface tension derivation LS}.
\end{proof}

We now present the local energy balance.
\begin{lem}\label{lem: local energy balance}
  The local energy balance associated with system (\ref{eq: strong form phi conservative}) takes the form:
  \begin{align}\label{eq: local energy balance}
    \partial_t \mathcal{H}  + \nabla \cdot \left(\left(\left(\mathcal{H}+p\right)\mathbf{I}-\boldsymbol{\tau}(\bu,\phi)\right) \bu\right) -\frac{1}{\mathbb{W}{\rm e}}\nabla \cdot \left(  \delta_\Gamma(\phi) \mathbf{P}_T \bu\right)+ \boldsymbol{\tau}(\bu,\phi):\nabla \bu + \epsilon^2\frac{1}{\mathbb{W}{\rm e}}\delta'(\phi)u_\nu = 0.
  \end{align}
\end{lem}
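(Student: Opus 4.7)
The plan is to derive a local version of the energy identity underlying \cref{theorem: energy dissipation strong original}, adapted to the regularized conservative level-set system \eqref{eq: strong form phi conservative}. I will take the dot product of the momentum equation \eqref{eq: strong form phi conservative mom} with $\bu$, which yields a pointwise kinetic-energy balance, and then add back the local time evolutions of the gravitational and surface energy densities, so that the left-hand side assembles into $\partial_t\mathcal{H}$. Throughout, the level-set equation \eqref{eq: strong form phi conservative LS} and the incompressibility constraint \eqref{eq: strong form phi conservative cont} imply the continuity-type identity $\partial_t\rho(\phi) + \nabla\cdot(\rho(\phi)\bu)=0$, which I will use repeatedly.

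First I would handle the kinetic part: dotting the conservative momentum equation with $\bu$ and using the above continuity relation gives
\begin{align*}
\bu\cdot\partial_t(\rho\bu)+\bu\cdot\nabla\cdot(\rho\bu\otimes\bu)=\partial_t\!\left(\tfrac{1}{2}\rho\|\bu\|_2^2\right)+\nabla\cdot\!\left(\tfrac{1}{2}\rho\|\bu\|_2^2\bu\right).
\end{align*}
The pressure and viscous terms are treated with standard product-rule identities, $\bu\cdot\nabla p=\nabla\cdot(p\bu)$ (by incompressibility) and $\bu\cdot\nabla\cdot\boldsymbol{\tau}=\nabla\cdot(\boldsymbol{\tau}\bu)-\boldsymbol{\tau}:\nabla\bu$ (using symmetry of $\boldsymbol{\tau}$). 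For gravity, the identity $\partial_t(\rho y)=-\nabla\cdot(\rho y\bu)+\rho u_y$ reproduces $-\tfrac{1}{\mathbb{F}\text{r}^2}\nabla\cdot(\rho y\bu)$ in the flux while cancelling the $\tfrac{1}{\mathbb{F}\text{r}^2}\rho u_y$ contribution of the body force.

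The nontrivial step is the surface-tension part, which I expect to be the main obstacle. The task is to reconcile the capillary force $\tfrac{1}{\mathbb{W}\text{e}}\delta_\Gamma\kappa u_\nu$ appearing in the momentum balance with the time derivative $\tfrac{1}{\mathbb{W}\text{e}}\partial_t\delta_\Gamma$ that must show up once we identify $\partial_t\mathcal{H}$. Using \cref{prop: id1 main} I rewrite
\begin{align*}
\tfrac{1}{\mathbb{W}\text{e}}\delta_\Gamma\kappa u_\nu=-\tfrac{1}{\mathbb{W}\text{e}}\bu\cdot\nabla\cdot(\mathbf{P}_T\delta_\Gamma)+\tfrac{1}{\mathbb{W}\text{e}}\epsilon^2\delta'(\phi)u_\nu,
\end{align*}
and since $\mathbf{P}_T\delta_\Gamma$ is symmetric, the product rule yields $\bu\cdot\nabla\cdot(\mathbf{P}_T\delta_\Gamma)=\nabla\cdot(\delta_\Gamma\mathbf{P}_T\bu)-\delta_\Gamma\mathbf{P}_T{:}\nabla\bu$. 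Simultaneously, \cref{lem: surface Dirac} gives $\tfrac{1}{\mathbb{W}\text{e}}\partial_t\delta_\Gamma=-\tfrac{1}{\mathbb{W}\text{e}}\nabla\cdot(\delta_\Gamma\bu)+\tfrac{1}{\mathbb{W}\text{e}}\delta_\Gamma\mathbf{P}_T{:}\nabla\bu$. The two $\delta_\Gamma\mathbf{P}_T{:}\nabla\bu$ contributions enter with opposite signs and cancel, which is the key algebraic event making the local balance clean.

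Finally I collect all pieces. The transport flux combines via $-\nabla\cdot(\tfrac{1}{2}\rho\|\bu\|^2\bu)-\tfrac{1}{\mathbb{F}\text{r}^2}\nabla\cdot(\rho y\bu)-\tfrac{1}{\mathbb{W}\text{e}}\nabla\cdot(\delta_\Gamma\bu)=-\nabla\cdot(\mathcal{H}\bu)$, and together with $-\nabla\cdot(p\bu)$ and $\nabla\cdot(\boldsymbol{\tau}\bu)$ reproduces the momentum/energy flux $\nabla\cdot(((\mathcal{H}+p)\mathbf{I}-\boldsymbol{\tau})\bu)$. What remains on the left is $\boldsymbol{\tau}{:}\nabla\bu$, the surface flux $-\tfrac{1}{\mathbb{W}\text{e}}\nabla\cdot(\delta_\Gamma\mathbf{P}_T\bu)$, and the $\epsilon$-defect $\tfrac{1}{\mathbb{W}\text{e}}\epsilon^2\delta'(\phi)u_\nu$, precisely the claimed identity \eqref{eq: local energy balance}.
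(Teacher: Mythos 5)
Your proposal is correct and follows essentially the same route as the paper's proof: both dot the momentum equation with $\bu$, use the level-set/continuity relations for the kinetic and gravitational densities, and resolve the surface-tension part by combining \cref{lem: surface Dirac} with \cref{prop: id1 main} so that the $\delta_\Gamma(\phi)\,\mathbf{P}_T{:}\nabla\bu$ contributions cancel, leaving the flux $-\tfrac{1}{\mathbb{W}{\rm e}}\nabla\cdot(\delta_\Gamma(\phi)\mathbf{P}_T\bu)$ and the $\epsilon^2$-defect. The only difference is cosmetic ordering (you invoke $\partial_t\rho+\nabla\cdot(\rho\bu)=0$ up front, whereas the paper carries the $\nabla\cdot\bu$ terms to the end and cancels them by incompressibility), so no further comment is needed.
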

The divergence terms represent the redistribution of energy over the domain and the second to last term accounts for energy dissipation due to diffusion. The last term that emanates from the regularization is unwanted. We return to this issue in \cref{sec: Energy-dissipative formulation}.
\begin{proof}
  First we consider the local kinetic energy of the system (\ref{eq: strong form phi conservative}). By straightforwardly applying the chain-rule we find:
  \begin{align}\label{eq: kin}
    \partial_t \left(\rho \frac{1}{2}\|\bu\|_2^2\right) =&~ \bu \cdot \partial_t (\rho \bu) - \tfrac{1}{2}\|\bu\|_2^2 \dfrac{\partial \rho}{\partial \phi}\partial_t \phi.
  \end{align}
  From the momentum and level-set equations, i.e. (\ref{eq: strong form phi conservative mom}) and (\ref{eq: strong form phi conservative LS}), we deduce:
  \begin{align}\label{eq: kin2}
    \bu \cdot \partial_t (\rho \bu) - \tfrac{1}{2}\|\bu\|_2^2 \dfrac{\partial \rho}{\partial \phi}\partial_t \phi  =&~ -\bu \cdot \nabla \cdot \left( \rho \bu \otimes \bu \right)+\bu^T\nabla \cdot\boldsymbol{\tau}(\bu, \phi) - \bu \cdot \nabla p - \frac{1}{\mathbb{W}{\rm e}} \kappa \delta_\Gamma u_\nu- \frac{1}{\mathbb{F}{\rm r}^2}\rho\bu \cdot \boldsymbol{\jmath} \nn\\
                                                        &~+ \tfrac{1}{2}\|\bu\|_2^2 \dfrac{\partial \rho}{\partial \phi}\bu \cdot \nabla \phi.
  \end{align}
  For the energetic contribution due the gravitational force, the chain-rule and the level-set equation (\ref{eq: strong form phi conservative LS}) convey that:
  \begin{align}\label{eq: grav}
    \partial_t \left(\frac{1}{\mathbb{F}{\rm r}^2} \rho y \right) =&~  \frac{1}{\mathbb{F}{\rm r}^2} y \dfrac{\partial \rho}{\partial \phi}\partial_t\phi =  -\frac{1}{\mathbb{F}{\rm r}^2} y \dfrac{\partial \rho}{\partial \phi}\bu \cdot \nabla \phi.
  \end{align}
  And for the local surface energy evolution we invoke \cref{lem: surface Dirac}:
  \begin{align}\label{eq: surf}
    \partial_t \left(\frac{1}{\mathbb{W}{\rm e}} \delta_\Gamma(\phi) \right) =&~  \frac{1}{\mathbb{W}{\rm e}}\left(\delta(\phi) \dfrac{\nabla \phi}{\|\nabla \phi \|_{\epsilon,2}} \cdot \nabla + \|\nabla \phi\|_{\epsilon,2} \delta'(\phi) \mathcal{I} \right) \partial_t \phi \nn\\
    =&~- \nabla \cdot \left(\frac{1}{\mathbb{W}{\rm e}} \delta_\Gamma(\phi) \bu \right)  + \frac{1}{\mathbb{W}{\rm e}}\delta_\Gamma(\phi) \nabla_\Gamma \bu.
  \end{align}
  Superposition of (\ref{eq: kin2})-(\ref{eq: surf}) yields:
    \begin{align}\label{eq: superpos}
    \partial_t \mathcal{H} =&~ -\bu \cdot \nabla \cdot \left( \rho \bu \otimes \bu \right) + \tfrac{1}{2}\|\bu\|_2^2 \dfrac{\partial \rho}{\partial \phi}\bu \cdot \nabla \phi \nn\\
                                                        &~- \frac{1}{\mathbb{F}{\rm r}^2}\rho\bu \cdot \boldsymbol{\jmath}-\frac{1}{\mathbb{F}{\rm r}^2} y \dfrac{\partial \rho}{\partial \phi}\bu \cdot \nabla \phi \nn\\
                                                        &~- \frac{1}{\mathbb{W}{\rm e}} \kappa \delta_\Gamma u_\nu- \nabla \cdot \left(\frac{1}{\mathbb{W}{\rm e}} \delta_\Gamma(\phi) \bu \right)  + \frac{1}{\mathbb{W}{\rm e}}\delta_\Gamma(\phi) \nabla_\Gamma \bu \nn\\
                                                        &~+\bu^T\nabla \cdot\boldsymbol{\tau}(\bu, \phi) - \bu \cdot \nabla p.
  \end{align}
  With the aim of simplifying (\ref{eq: superpos}) we introduce the identities:  
  \begin{subequations}\label{eq: identity cont}
  \begin{align}
    -\bu^T \nabla \cdot \left( \rho \bu \otimes \bu \right) + \tfrac{1}{2}\|\bu\|_2^2 \dfrac{\partial \rho}{\partial \phi}\bu \cdot \nabla \phi =&~ -  \nabla \cdot \left( \tfrac{1}{2}\rho\|\bu\|^2 \bu\right)- \tfrac{1}{2}\rho\|\bu\|^2 \nabla \cdot \bu, \\
    -\frac{1}{\mathbb{F}{\rm r}^2}\rho \bu \cdot  \boldsymbol{\jmath} -\frac{1}{\mathbb{F}{\rm r}^2} y \dfrac{\partial \rho}{\partial \phi}\bu \cdot \nabla \phi  =&~  - \nabla \cdot \left(\frac{1}{\mathbb{F}{\rm r}^2} \rho y \bu \right)+ \frac{1}{\mathbb{F}{\rm r}^2} \rho y \nabla \cdot\bu, \\
    \frac{1}{\mathbb{W}{\rm e}}\delta_\Gamma(\phi) \nabla_\Gamma \bu =&~ \nabla \cdot \left(\frac{1}{\mathbb{W}{\rm e}}\delta_\Gamma(\phi) \mathbf{P}_T \mathbf{u} \right) + \frac{1}{\mathbb{W}{\rm e}}\delta_\Gamma \kappa u_\nu- \epsilon^2\frac{1}{\mathbb{W}{\rm e}}\delta'(\phi)u_\nu.
  \end{align}
  \end{subequations}  
  The first and the second identity follow from expanding the gradient and divergence operators.
  To obtain the third we note
  \begin{align}
    \delta_\Gamma(\phi) \nabla_\Gamma \bu = \nabla \cdot \left( \delta_\Gamma(\phi) \mathbf{P}_T \mathbf{u} \right) - \bu \cdot \nabla \left( \delta_\Gamma(\phi) \mathbf{P}_T \right)
  \end{align}
  and apply \cref{prop: id1 main} on the second term.
  Invoking (\ref{eq: identity cont}) into (\ref{eq: superpos}) and adding a suitable partition of zero yields:
    \begin{align}\label{eq: superpos2}
    \partial_t \mathcal{H} + \nabla \cdot \left(\left(\left(\mathcal{H}+p\right)\mathbf{I}-\boldsymbol{\tau}(\bu,\phi)\right) \bu\right) -    \nabla \cdot \left(\frac{1}{\mathbb{W}{\rm e}}\delta_\Gamma(\phi) \mathbf{P}_T \mathbf{u} \right)=&~  -\boldsymbol{\tau}(\bu,\phi):\nabla \bu - \epsilon^2\frac{1}{\mathbb{W}{\rm e}}\delta'(\phi)u_\nu\nn\\
    &~+\left(- \tfrac{1}{2}\rho\|\bu\|^2 + p+ \frac{1}{\mathbb{F}{\rm r}^2} \rho y\right) \nabla \cdot \bu.
  \end{align}  
  With the aid of the continuity equation (\ref{eq: strong form phi conservative cont}) the latter member on the right-hand side of (\ref{eq: superpos2}) vanishes. This completes the proof.
\end{proof}
\begin{rmk}
The energy balance of \ref{lem: local energy balance} may also be written as:
 \begin{align}
    \partial_t \mathcal{H}  + \nabla \cdot \left(\left(\mathcal{H}+p\right) \bu\right) &-\frac{1}{\mathbb{R}{\rm e}} \nabla \cdot \left(2 \mu(\phi) \nabla \left( \tfrac{1}{2} \|\bu\|^2_2 \right)\right)~\nn\\
   & -\frac{1}{\mathbb{W}{\rm e}}\nabla \cdot \left(  \delta_\Gamma(\phi) \mathbf{P}_T \bu\right) + \boldsymbol{\tau}(\bu,\phi):\nabla \bu  + \epsilon^2\frac{1}{\mathbb{W}{\rm e}}\delta'(\phi)u_\nu =0.
 \end{align}
 In this form we clearly see that the second divergence term represents the diffusion of kinetic energy density.
\end{rmk}

We can now present the global energy evolution.
\begin{thm}\label{thm: LS energy balance}
 Let $\bu, p$ and $\phi$ be smooth solutions of the strong form (\ref{eq: strong form phi conservative}). The associated total energy $\mathscr{E}$, given in (\ref{eq: total energy LS}), satisfies the dissipation inequality:
 \begin{align}
   \frac{{\rm d}}{{\rm d}t} \mathscr{E}(\bu, \phi)=- (\boldsymbol{\tau}(\bu, \phi),\nabla \bu)_\Omega + {\rm bnd} \leq 0 + {\rm bnd},
 \end{align}
 where we have set $\epsilon = 0$.
\end{thm}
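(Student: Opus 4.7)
The plan is to derive the global energy evolution by integrating the local energy balance from \cref{lem: local energy balance} over the spatial domain $\Omega$, then using the divergence theorem to absorb the divergence terms into the boundary contribution.

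First I would note that the global total energy satisfies $\mathscr{E}(\bu,\phi) = (1,\mathcal{H})_\Omega$, so that $\tfrac{\rm d}{{\rm d}t}\mathscr{E} = (1,\partial_t\mathcal{H})_\Omega$. Substituting the local balance (\ref{eq: local energy balance}) yields
\begin{align*}
\dt \mathscr{E}(\bu,\phi) =&~ -(1,\boldsymbol{\tau}(\bu,\phi):\nabla\bu)_\Omega - \epsilon^2\frac{1}{\mathbb{W}{\rm e}}(1,\delta'(\phi) u_\nu)_\Omega \\
&~-\left(1,\nabla\cdot\left(\left(\left(\mathcal{H}+p\right)\mathbf{I}-\boldsymbol{\tau}(\bu,\phi)\right)\bu\right)\right)_\Omega + \frac{1}{\mathbb{W}{\rm e}}\left(1,\nabla\cdot(\delta_\Gamma(\phi)\mathbf{P}_T\bu)\right)_\Omega.
\end{align*}
Applying the divergence theorem to the two transport terms recasts them as surface integrals over $\partial\Omega$, which are absorbed into the proxy ${\rm bnd}$.

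Next, setting $\epsilon=0$ as stipulated in the theorem eliminates the spurious regularization contribution $\epsilon^2\mathbb{W}{\rm e}^{-1}(1,\delta'(\phi)u_\nu)_\Omega$. What remains in the interior is precisely the viscous dissipation term $-(\boldsymbol{\tau}(\bu,\phi),\nabla\bu)_\Omega$, which matches the right-hand side of the claimed identity. The non-positivity follows immediately from $\boldsymbol{\tau}(\bu,\phi) = 2\mu(\phi)\nabla^s\bu$ with $\mu(\phi)\geq 0$: since $\boldsymbol{\tau}$ is symmetric, its contraction with $\nabla\bu$ equals $2\mu(\phi)\|\nabla^s\bu\|^2 \geq 0$, giving $-(\boldsymbol{\tau},\nabla\bu)_\Omega \leq 0$.

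No step is truly difficult here because the heavy lifting has already been done in \cref{lem: norm grad phi}, \cref{lem: surface Dirac}, \cref{prop: id1 main}, and \cref{lem: local energy balance}. The only subtlety worth commenting on is the role of $\epsilon$: the regularization of the normal in $\|\nabla\phi\|_{\epsilon,2}$ produces an extra $\epsilon^2\delta'(\phi)u_\nu$ source in the local balance which, in general, spoils dissipativity; setting $\epsilon=0$ recovers the clean dissipation statement, and this is exactly the defect that motivates the energy-dissipative reformulation in \cref{sec: Energy-dissipative formulation}.
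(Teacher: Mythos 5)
Your proposal is correct and follows essentially the same route as the paper: integrate the local balance of \cref{lem: local energy balance} over $\Omega$, apply the divergence theorem to push the transport terms to $\partial\Omega$ (collected in ${\rm bnd}$), and set $\epsilon=0$ to remove the regularization term, leaving only the viscous dissipation. Your added remark that $\boldsymbol{\tau}:\nabla\bu = 2\mu(\phi)\|\nabla^s\bu\|^2\geq 0$ by symmetry is a correct justification of the sign that the paper leaves implicit.
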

\begin{proof}
This follows from integrating the energy balance of \cref{lem: local energy balance} over $\Omega$ and using the divergence theorem:
\begin{align}
  \displaystyle\int_\Omega \partial_t \mathcal{H} ~{\rm d}\Omega &~+ \displaystyle\int_\Omega \boldsymbol{\tau}(\bu,\phi):\nabla \bu ~{\rm d}\Omega+ \displaystyle\int_{\partial \Omega} u_n \left(\mathcal{H}+p\right) -\bn^T \boldsymbol{\tau}(\bu,\phi)\bu~ {\rm d}\Omega 
  \nn\\
  &~ + \displaystyle\int_\Omega  \epsilon^2\frac{1}{\mathbb{W}{\rm e}}\delta'(\phi)u_\nu ~{\rm d}\Omega - \displaystyle\int_{\partial \Omega} \frac{1}{\mathbb{W}{\rm e}} \delta_\Gamma(\phi)  \bn^T\mathbf{P}_T(\phi)\bu  ~{\rm d S} = 0.
\end{align}
We discard the line force terms on the right-hand side and reorganize to get:
\begin{align}\label{eq: agrg}
  \dt \mathscr{E}(\bu, \phi) =&~ - \displaystyle\int_\Omega \boldsymbol{\tau}(\bu,\phi):\nabla \bu ~{\rm d}\Omega- \displaystyle\int_\Omega  \epsilon^2\frac{1}{\mathbb{W}{\rm e}}\delta'(\phi)u_\nu ~{\rm d}\Omega\nn\\
  &~+\displaystyle\int_{\partial \Omega} \bn^T \boldsymbol{\tau}(\bu,\phi)\bu-u_n \left(\rho \tfrac{1}{2}\|\bu\|_2^2+ \frac{1}{\mathbb{F}{\rm r}^2}\rho y +p\right) ~ {\rm d S} = 0.
\end{align}
Using the homogeneous boundary condition and setting $\epsilon =0$ finalizes the proof.
\end{proof}

The energy balance associated with the original model (\ref{eq: strong form total}) and that of the level-set formulation (\ref{eq: strong form phi conservative}) comply.
\begin{corol}
The energetic balance associated with diffuse model (\ref{eq: strong form phi conservative}) (\cref{thm: LS energy balance}) is consistent that of the original model (\ref{eq: strong form total}) (\cref{theorem: energy dissipation strong original}).
\end{corol}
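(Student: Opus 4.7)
The plan is to observe that with $\epsilon = 0$ both energy identities collapse to the same structural form
$\dt \mathscr{E} = -(\boldsymbol{\tau}(\bu), \nabla \bu)_\Omega + \mathrm{bnd}$, so that consistency reduces to a term-by-term matching of (i) the three energy contributions and (ii) the bulk dissipation integrand, with the sharp-interface quantities first placed in non-dimensional form as in \cref{subsec: non-dim}. Since both \cref{theorem: energy dissipation strong original} and \cref{thm: LS energy balance} absorb surface/boundary contributions into the symbol $\mathrm{bnd}$, no boundary-wise comparison is required.

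For the kinetic and gravitational contributions I would argue that the sharp limit $H_\eps \to H$ reduces (\ref{eq: rho H eps}) to (\ref{eq: rho}) and likewise (\ref{eq: mu H eps}) to (\ref{eq: mu}) pointwise on $\Omega \setminus \Gamma(t)$. Hence $\mathscr{E}^{\text{K}}(\bu,\phi) \to \mathscr{E}^{\text{K}}(\bu)$ and $\mathscr{E}^{\text{G}}(\phi) \to \mathscr{E}^{\text{G}}$ (in their dimensionless versions), and the integrand $\boldsymbol{\tau}(\bu,\phi){:}\nabla \bu$ becomes $\boldsymbol{\tau}(\bu){:}\nabla \bu$, matching the dissipation term of \cref{theorem: energy dissipation strong original}. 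For the surface energy I would invoke the defining property of the surface Dirac $\delta_\Gamma(\phi) = \delta(\phi)\|\nabla \phi\|_{\epsilon,2}$: for smooth $\phi$ and $\epsilon = 0$ the coarea-type identity
\begin{align*}
  (1, \delta_\Gamma(\phi))_\Omega \;=\; \int_\Omega \delta(\phi)\|\nabla \phi\|\,{\rm d}\Omega \;=\; \int_{\Gamma(t)} {\rm d}\Gamma,
\end{align*}
see \cite{chang1996level, osher2001level}, identifies $\mathscr{E}^{\text{S}}(\phi)$ with the non-dimensional $\mathscr{E}^{\text{S}}$ after multiplication by $1/\mathbb{W}\text{e}$. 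The spurious term $\int_\Omega \epsilon^2 \frac{1}{\mathbb{W}\text{e}}\delta'(\phi) u_\nu \,{\rm d}\Omega$ in (\ref{eq: agrg}) vanishes identically at $\epsilon = 0$, as already noted just below \cref{lem: local energy balance}.

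The only mildly delicate point is the surface-energy identification, since it is a distributional rather than pointwise statement. However, under the smoothness hypothesis on $\phi$ maintained throughout \cref{sec: diffuse}, this is a standard coarea argument that does not require any new analysis. All remaining identifications follow directly from the regularized definitions (\ref{eq: rho mu LS approximate 0}) and the non-dimensionalization of \cref{subsec: non-dim}, so the proof is essentially a cross-reference between the two theorems and \cref{lem: surface Dirac}.
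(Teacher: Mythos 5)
Your proposal is correct and follows essentially the same route as the paper, which simply takes the limit $\eps \rightarrow 0$ in (\ref{eq: agrg}) to recover the sharp-interface balance of \cref{theorem: energy dissipation strong original}; your version merely fills in the details the paper leaves implicit (pointwise convergence $H_\eps \to H$ for the kinetic, gravitational and viscous terms, the coarea identification of the surface energy, and the vanishing of the $\epsilon^2$ term). No substantive difference in approach.
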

\begin{proof}
In the limit $\eps \rightarrow 0$ we may transform (\ref{eq: agrg}) back to get:
\begin{align}
  \dt \mathscr{E}(\bu) =&~ \displaystyle\int_{\partial \Omega} \bn^T \boldsymbol{\tau}(\bu)\bu-u_n \left(\rho \tfrac{1}{2}\|\bu\|_2^2+\frac{1}{\mathbb{F}{\rm r}^2} \rho y +p\right) ~ {\rm d S} 
  \nn\\
  &~- \displaystyle\int_\Omega \boldsymbol{\tau}(\bu):\nabla \bu ~{\rm d}\Omega.
\end{align}
\end{proof}

To close this Section we note that one may avoid evaluating second derivatives appearing in the surface tension term.
This holds for the original model (\ref{eq: strong form total}) which we have addressed with briefly in \cref{rmk: alternative form}.
In the following Proposition we note that this alternative form directly converts to the diffuse model (\ref{eq: strong form phi conservative}).
\begin{prop}\label{lem: equivalence}
We have the identity:
  \begin{align}
  \displaystyle\int_\Omega \frac{1}{\mathbb{W}{\rm e}} \delta_\Gamma(\phi) \kappa(\phi) \boldsymbol{\nu}(\phi) \cdot  \bw ~{\rm d}\Omega =&~\displaystyle\int_\Omega \frac{1}{\mathbb{W}{\rm e}} \delta_\Gamma(\phi) \nabla \bw: \mathbf{P}_T(\phi)~{\rm d}\Omega + \frac{1}{\mathbb{W}{\rm e}}\displaystyle\int_\Omega \epsilon^2 \delta'(\phi)\boldsymbol{\nu}(\phi) \bw~{\rm d}\Omega.
  \end{align}
\end{prop}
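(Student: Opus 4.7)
The plan is to invoke the earlier \cref{prop: id1 main}, which already provides a divergence representation of $\delta_\Gamma(\phi)\kappa(\phi)\boldsymbol{\nu}(\phi)$, and then transfer that divergence off via integration by parts. Concretely, \cref{prop: id1 main} rearranges to
\begin{align*}
\delta_\Gamma(\phi)\boldsymbol{\nu}(\phi)\kappa(\phi) = -\nabla\cdot\left(\mathbf{P}_T(\phi)\delta_\Gamma(\phi)\right) + \epsilon^{2}\delta'(\phi)\boldsymbol{\nu}(\phi),
\end{align*}
so the first step is simply to substitute this expression into the left-hand side of the claimed identity (with the $1/\mathbb{W}{\rm e}$ factored out).

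Next, I would take the $L^2$-inner product with $\bw$ and apply the divergence theorem to the first piece. This produces
\begin{align*}
-\int_\Omega \bw\cdot \nabla\cdot\left(\delta_\Gamma(\phi)\mathbf{P}_T(\phi)\right){\rm d}\Omega \;=\; \int_\Omega \delta_\Gamma(\phi)\mathbf{P}_T(\phi):\nabla\bw~{\rm d}\Omega \;-\; \int_{\partial\Omega} \delta_\Gamma(\phi)\,\bw^{T}\mathbf{P}_T(\phi)\mathbf{n}~{\rm d}S,
\end{align*}
while the $\epsilon^{2}\delta'(\phi)\boldsymbol{\nu}(\phi)$ contribution is carried through unchanged. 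Reassembling these two pieces and multiplying by $1/\mathbb{W}{\rm e}$ gives precisely the right-hand side of the proposition, modulo the boundary integral.

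The only non-routine point is dealing with that boundary term, which is a line-force contribution along $\partial\Omega\cap\Gamma(t)$ of the same type already encountered in \cref{eq: continuous energy surf} and in the discussion surrounding \cref{subsec: standard weak}. In keeping with the standing assumption stated just after \cref{eq: strong tangential} (and reiterated at the start of \cref{subsec: standard weak}) that such line-force terms vanish due to boundary or auxiliary conditions, this boundary integral drops out. With that convention in place, the identity follows immediately. I expect the only subtlety to be bookkeeping the sign in \cref{prop: id1 main} and confirming that the homogeneous-boundary-condition convention already adopted in the paper suffices to kill the surface contribution; no new analysis is required.
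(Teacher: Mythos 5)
Your proposal is correct and follows essentially the same route as the paper's own proof in \ref{appendix: surface tension derivation LS}: the paper likewise combines the divergence identity of \cref{prop: id1 main} (stated there in index notation as \cref{prop: id1}) with a single integration by parts and suppresses the resulting boundary integral under the standing line-force assumption. The only cosmetic difference is that you start from the left-hand side and substitute the identity, whereas the paper integrates the right-hand side by parts first; the content is identical.
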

\begin{proof}
 See \ref{appendix: surface tension derivation LS}.
\end{proof}
With the aid of \cref{lem: equivalence} one can directly evaluate the surface tension term and does not require any additional procedure such as the one from \cite{jansen1999better}.

\section{Energy-dissipative formulation}\label{sec: Energy-dissipative formulation}
We aim to develop an energetically stable Galerkin-type finite element method for the diffuse level-set model (\ref{eq: strong form phi conservative}).
In \cref{sec: Standard continuous formulations,sec: diffuse} we have in great detail depicted the procedure to arrive at the energy dissipative statement.
This procedure involves several steps that are not valid when dealing with standard finite element discretization spaces.
For instance the operator (\ref{eq: operator}) associated with the surface energy is not permittable in a standard discrete setting .
Independently, the temporal discretization also gives rise to issues.
Standard second-order semi time-discrete formulations of (\ref{eq: strong form phi conservative}) are also equipped with an energy-dissipative structure.
We demonstrate this in \ref{appendix: Standard mid-point Galerkin discretizations}.
Lastly, we note that the standard diffuse-interface model contains an unwanted term stemming from the regularization.

The first two issues arise from the fact that the standard model is too restrictive with regard to the function spaces.
Enlarging the standard function spaces introduces many complications and as such we do not further look into this strategy.
The alternative is modify the diffuse model (\ref{eq: strong form phi conservative}).
This is the road we pursue.
We employ the concept of \textit{functional entropy variables} proposed by Liu et al. \cite{liu2013functional}.
Liu and co-workers introduce the concept of functional entropy variables for the isothermal Navier-Stokes-Korteweg equations \cite{liu2013functional} and for the Navier-Stokes-Korteweg equations including the interstitial working flux term \cite{liu2015liquid}.
Here we apply the formalism to the level-set formulation of the incompressible Navier-Stokes equations with surface tension.
This creates the extra space to resolve both discrepancies mentioned above.
Additionally, the unwanted regularization term also vanishes.

\subsection{Functional entropy variables}
Energetic stability for the incompressible Navier-Stokes equations with surface tension coincides with stability with respect to a mathematical entropy function.
Thus to construct an energy-dissipative formulation for the incompressible Navier-Stokes equations the natural approach seems to adopt entropy principles.
For systems of conservation laws classical entropy variables are defined as the partial derivatives of an entropy with respect to the conservation variables.
The Clausius-Duhem inequality plays the role of energetic stability and this results from pre-multiplication of the system of conservation laws by the entropy variables.
The standard approach of constructing an entropy stable discretization as in Hughes et al. \cite{hughes1986new,Sha1091} is not applicable since the mathematical entropy is not an algebraic function of the conservation variables.
In the situation of a general mathematical entropy functional the derivatives should be taken in the functional setting.
The corresponding Clausius-Duhem inequality is then the result from the action of the entropy variables on the system of conservation laws.

In the current study we wish to inherit the notion of energetic stability for the incompressible model with surface tension.
To this purpose we use as mathematical entropy functional the energy density (\ref{eq: loc energy}) which we recall here:
\begin{align}\label{eq: math entropy}
 \mathcal{H} = \tfrac{1}{2}\rho \|\bu\|_2^2 + \frac{1}{\mathbb{F}{\rm r}^2}\rho y +\frac{1}{\mathbb{W}{\rm e}} \delta_\Gamma.
\end{align}
Following the approach described above, energetic stability results from the action of the entropy variables on the system of equations.
In contrast to \cite{liu2013functional} and \cite{liu2015liquid} the notion of conservation variables does not exist.
Instead, the derivatives of $\mathcal{H}$ should here be taken with respect to the model variables $\mathbf{U}=(\phi, \rho \bu)$.
Remark that (\ref{eq: math entropy}) is a functional of the model variables $\mathbf{U}$:
\begin{align}\label{eq: math entropy conservation var}
 \mathcal{H} = \mathcal{H}(\mathbf{U})= \frac{\|\rho \bu\|_2^2}{2\rho(\phi)} +\frac{1}{\mathbb{F}{\rm r}^2}\rho(\phi) y + \frac{1}{\mathbb{W}{\rm e}} \delta(\phi) \| \nabla \phi \|_{\epsilon,2}.
\end{align}
Note that $\mathcal{H}$ contains a gradient term $\|\nabla \phi\|$ which is non-local and thus the appropriate derivative is the functional derivative.
We define the entropy variables as functional derivatives:
\begin{align}
 \mathbf{V} = [V_1; V_2; V_3; V_4]^T= \dfrac{\updelta \mathcal{H}}{\updelta \mathbf{U}}  = \left[\dfrac{\updelta \mathcal{H}}{\updelta \phi}; \dfrac{\updelta \mathcal{H}}{\updelta (\rho u_1)}; \dfrac{\updelta \mathcal{H}}{\updelta (\rho u_2)}; \dfrac{\updelta \mathcal{H}}{\updelta (\rho u_3)}\right]^T.
\end{align}
The resulting functional derivatives are for test functions $\updelta \bv= [\updelta v_1, \updelta v_2, \updelta v_3, \updelta v_4]^T$:
\begin{subequations}\label{eq: functional derivatives}
\begin{align}
 \dfrac{\updelta \mathcal{H}}{\updelta \phi}[\updelta v_1] 
                                                     =&~ -\tfrac{1}{2} \|\bu\|_2^2\rho'(\phi)\updelta v_1 + \frac{1}{\mathbb{F}{\rm r}^2} \rho'(\phi) y \updelta v_1 \nn\\
                                                      &~ +\frac{1}{\mathbb{W}{\rm e}} \delta(\phi) \dfrac{\nabla \phi}{\|\nabla \phi\|_{\epsilon,2}}\cdot \nabla \updelta v_1 + \frac{1}{\mathbb{W}{\rm e}} \|\nabla \phi \|_{\epsilon,2} \delta'(\phi) \updelta v_1,\\
 \dfrac{\updelta \mathcal{H}}{\updelta (\rho u_1)}[\updelta v_2] =&~u_1 \updelta v_2, \\
 \dfrac{\updelta \mathcal{H}}{\updelta (\rho u_2)}[\updelta v_3] =&~u_2 \updelta v_3, \\
 \dfrac{\updelta \mathcal{H}}{\updelta (\rho u_3)}[\updelta v_4] =&~u_3 \updelta v_4.
\end{align}
\end{subequations}
We emphasize that it is essential to use the expression in term of the model variables (\ref{eq: math entropy conservation var}) to evaluate (\ref{eq: functional derivatives}). 
The associated explicit form of (\ref{eq: functional derivatives}) reads:
\begin{subequations}\label{eq: functional derivatives explicit}
\begin{align}
 \dfrac{\updelta \mathcal{H}}{\updelta \phi} =&~ -\tfrac{1}{2} \|\bu\|_2^2\rho'(\phi) + \frac{1}{\mathbb{F}{\rm r}^2} \rho'(\phi) y  -\frac{1}{\mathbb{W}{\rm e}} \delta(\phi) \nabla \cdot \left(\dfrac{\nabla \phi}{\|\nabla \phi\|_{\epsilon,2}}\right) + \frac{1}{\mathbb{W}{\rm e}} \delta'(\phi) \dfrac{\epsilon^2}{\|\nabla \phi\|_{\epsilon,2}},\\
 \dfrac{\updelta \mathcal{H}}{\updelta (\rho \bu)} =&~\bu^T.
\end{align}
\end{subequations}
We may use the functional entropy variables to systematically recover the energy balance (\ref{eq: local energy balance}).
\begin{thm}\label{thm: apply entropy var}
 Applying the functional entropy variables to the incompressible two-phase Navier-Stokes equations with surface tension recovers the energy balance (\ref{eq: local energy balance}):
  \begin{align}
    \partial_t \mathcal{H}  + \nabla \cdot \left(\left(\left(\mathcal{H}+p\right)\mathbf{I}-\boldsymbol{\tau}(\bu,\phi)\right) \bu\right) + \boldsymbol{\tau}(\bu,\phi):\nabla \bu -\frac{1}{\mathbb{W}{\rm e}} \nabla \cdot \left(  \delta_\Gamma(\phi) \mathbf{P}_T \bu\right) + \epsilon^2\frac{1}{\mathbb{W}{\rm e}}\delta'(\phi)u_\nu = 0.
  \end{align}
\end{thm}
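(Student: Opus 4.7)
The plan is to multiply each governing equation in the conservative system (\ref{eq: strong form phi conservative}) by its associated functional entropy variable and sum the contributions: the momentum equation is pre-multiplied by $\bu$ (the entropy variables conjugate to $\rho\bu$, cf.\ (\ref{eq: functional derivatives explicit})$_2$), and the level-set equation by the explicit form of $V_1=\delta \mathcal{H}/\delta \phi$ from (\ref{eq: functional derivatives explicit})$_1$. Since no conservation variable is conjugate to pressure, the continuity equation is used as a supplementary tool (much as in the proof of \cref{theorem: energy dissipation strong original}) to eliminate $\nabla\cdot\bu$ terms. The ultimate objective is to show that the resulting expression matches the statement of \cref{lem: local energy balance} termwise.

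First, I would reconstruct $\partial_t\mathcal{H}$ from the temporal parts. Writing $\mathcal{H}$ in the form (\ref{eq: math entropy conservation var}) and applying the chain rule gives
\begin{align*}
\partial_t\mathcal{H}=\bu\cdot\partial_t(\rho\bu)+\bigl(-\tfrac12\|\bu\|_2^2\rho'(\phi)+\tfrac{1}{\mathbb{F}{\rm r}^2}\rho'(\phi)y+\tfrac{1}{\mathbb{W}{\rm e}}\delta'(\phi)\|\nabla\phi\|_{\epsilon,2}\bigr)\partial_t\phi+\tfrac{1}{\mathbb{W}{\rm e}}\delta(\phi)\tfrac{\nabla\phi\cdot\nabla\partial_t\phi}{\|\nabla\phi\|_{\epsilon,2}}.
\end{align*}
The first three brackets coincide with the local (non-gradient) part of $V_1$. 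The remaining gradient term can be recast using the identity $\delta(\phi)\tfrac{\nabla\phi\cdot\nabla\partial_t\phi}{\|\nabla\phi\|_{\epsilon,2}}=\nabla\!\cdot\!\bigl(\delta(\phi)\tfrac{\nabla\phi}{\|\nabla\phi\|_{\epsilon,2}}\partial_t\phi\bigr)-\partial_t\phi\,\nabla\!\cdot\!\bigl(\delta(\phi)\tfrac{\nabla\phi}{\|\nabla\phi\|_{\epsilon,2}}\bigr)$; the second piece exactly supplies the curvature contribution in $V_1$, so that $V_1\partial_t\phi+\bu\cdot\partial_t(\rho\bu)=\partial_t\mathcal{H}-\tfrac{1}{\mathbb{W}{\rm e}}\nabla\!\cdot\!\bigl(\delta(\phi)\boldsymbol{\nu}(\phi)\,\partial_t\phi\bigr)$ after folding in the regularization correction.

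Next, I would dispose of the spatial contributions. Substituting $\partial_t\phi=-\bu\cdot\nabla\phi$ into the leftover divergence produces $\tfrac{1}{\mathbb{W}{\rm e}}\nabla\!\cdot\!(\delta_\Gamma(\phi)u_\nu\boldsymbol{\nu})$ since $\bu\cdot\nabla\phi=u_\nu\|\nabla\phi\|_{\epsilon,2}$. Feeding $V_1$ into the convective term $\bu\cdot\nabla\phi$ yields three scalar contributions: the $\rho'$ pieces recombine with $\bu\cdot\nabla\!\cdot\!(\rho\bu\otimes\bu)$ and the gravity term $\tfrac{1}{\mathbb{F}{\rm r}^2}\rho\,\bu\cdot\boldsymbol{\jmath}$ from the momentum equation to form $\nabla\!\cdot\!\bigl((\tfrac12\rho\|\bu\|^2+\tfrac{1}{\mathbb{F}{\rm r}^2}\rho y)\bu\bigr)$ up to $\nabla\!\cdot\!\bu$ remainders; the curvature-trace piece $-\tfrac{1}{\mathbb{W}{\rm e}}\delta(\phi)\kappa(\phi)\,\bu\cdot\nabla\phi$ exactly cancels the surface tension contribution $\bu\cdot(\tfrac{1}{\mathbb{W}{\rm e}}\delta_\Gamma\kappa\boldsymbol{\nu})$; finally the regularization piece $\tfrac{1}{\mathbb{W}{\rm e}}\delta'(\phi)\tfrac{\epsilon^2}{\|\nabla\phi\|_{\epsilon,2}}\bu\cdot\nabla\phi$ reduces to the desired $\epsilon^2\tfrac{1}{\mathbb{W}{\rm e}}\delta'(\phi)u_\nu$ term. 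Handling the viscous and pressure terms via integration of $\bu\cdot(\nabla p-\nabla\!\cdot\!\boldsymbol{\tau})$ contributes $\nabla\!\cdot\!((p\mathbf{I}-\boldsymbol{\tau})\bu)+\boldsymbol{\tau}(\bu,\phi)\!:\!\nabla\bu$ after applying (\ref{eq: strong form phi conservative cont}).

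The main obstacle will be the surface tension bookkeeping, specifically matching the divergence $-\tfrac{1}{\mathbb{W}{\rm e}}\nabla\!\cdot\!(\delta_\Gamma \mathbf{P}_T \bu)$ that appears in the claimed balance against the $\tfrac{1}{\mathbb{W}{\rm e}}\nabla\!\cdot\!(\delta_\Gamma u_\nu \boldsymbol{\nu})$ produced by my time-derivative reduction. The bridge is exactly \cref{prop: id1 main}, which relates $\nabla\!\cdot\!(\delta_\Gamma \mathbf{P}_T)$ to $\delta_\Gamma\kappa\boldsymbol{\nu}-\epsilon^2\delta'\boldsymbol{\nu}$; combining this with $\mathbf{P}_T\bu=\bu-u_\nu\boldsymbol{\nu}$ and using $\nabla\!\cdot\!\bu=0$ converts one divergence into the other (up to the $\epsilon^2\delta'(\phi)u_\nu$ residual already isolated above). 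Collecting all pieces then yields the claimed local balance.
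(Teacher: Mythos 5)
Your proposal is correct and shares the paper's overall skeleton: pair the momentum equation with $\bu$ and the level-set equation with $V_1$, use the continuity equation to dispose of the $\nabla\cdot\bu$ remainders, and reassemble \cref{lem: local energy balance}. Where you genuinely diverge is in the surface-energy bookkeeping. The paper never differentiates $\mathcal{H}$ explicitly; it takes $\mathbf{V}[\partial_t\mathbf{U}]=\partial_t\mathcal{H}$ as a formal identity, pushes the operator (\ref{eq: operator}) through the convective flux $\bu\cdot\nabla\phi$ via \cref{lem: surface Dirac} to produce $\nabla\cdot(\delta_\Gamma\bu)-\delta_\Gamma\nabla_\Gamma\bu$, and then needs \cref{prop: id1 main} (through the third identity of (\ref{eq: identity cont})) to convert $\delta_\Gamma\nabla_\Gamma\bu$ into $\nabla\cdot(\delta_\Gamma\mathbf{P}_T\bu)+\delta_\Gamma\kappa u_\nu-\epsilon^2\delta'(\phi)u_\nu$, whose curvature piece then cancels the surface-tension force. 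You instead compute $\partial_t\mathcal{H}$ by an explicit chain rule, integrate the non-local term by parts to generate the flux $\nabla\cdot\left(\delta(\phi)\boldsymbol{\nu}(\phi)\,\partial_t\phi\right)$ while simultaneously manufacturing the curvature part of $V_1$, and let the curvature cancellation happen directly between $V_1(\bu\cdot\nabla\phi)$ and $\bu\cdot\left(\tfrac{1}{\mathbb{W}{\rm e}}\delta_\Gamma\kappa\boldsymbol{\nu}\right)$. In your organization the final divergence rearrangement is purely algebraic, $\delta_\Gamma u_\nu\boldsymbol{\nu}=\delta_\Gamma\bu-\delta_\Gamma\mathbf{P}_T\bu$ with $\nabla\cdot(\delta_\Gamma\bu)$ absorbed into $\nabla\cdot(\mathcal{H}\bu)$, so the appeal to \cref{prop: id1 main} and to $\nabla\cdot\bu=0$ in your last step is actually superfluous --- a small economy over the paper. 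The price is that your route leans on a pointwise chain-rule computation of $\partial_t\mathcal{H}$ rather than on the reusable evolution law of \cref{lem: surface Dirac}, which the paper also exploits elsewhere.
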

\begin{proof}
  Application of the functional entropy variables on the time-derivatives provides:
  \begin{align}\label{eq: entropy var 0}
    \mathbf{V}\left[\dfrac{\partial \mathbf{U} }{\partial t}\right] = \dfrac{\updelta \mathcal{H}}{\updelta \mathbf{U}}\left[\dfrac{\partial \mathbf{U} }{\partial t}\right] = \dfrac{\partial \mathcal{H} }{\partial t}.
  \end{align}
  Next we apply the entropy variables on the fluxes to get:
  \begin{align}\label{eq: entropy var 1}
    \mathbf{V}\begin{bmatrix}
                \bu \cdot \nabla \phi \\
                \nabla \cdot \left(\rho \bu \otimes \bu \right) + \nabla p 
              \end{bmatrix} =&~ -\left(\tfrac{1}{2}\|\bu\|^2_2 - \frac{1}{\mathbb{F}{\rm r}^2}y\right) \dfrac{\partial \rho}{\partial \phi} \bu \cdot \nabla \phi + \bu^T \nabla \cdot \left(\rho \bu \otimes \bu \right) \nn\\
              &~+ \nabla \cdot \left( p \bu\right) - p \nabla \cdot \bu \nn\\
                             &~ +\frac{1}{\mathbb{W}{\rm e}} \delta(\phi) \dfrac{\nabla \phi}{\|\nabla \phi\|_{\epsilon,2}}\cdot \nabla (\bu \cdot \nabla \phi) + \frac{1}{\mathbb{W}{\rm e}} \|\nabla \phi \|_{\epsilon,2} \delta'(\phi) (\bu \cdot \nabla \phi)
  \end{align}
  Testing the entropy variables with the surface tension term gives:
  \begin{align}\label{eq: entropy var 2a}
    \mathbf{V}\begin{bmatrix}
                0 \\
               \dfrac{1}{\mathbb{W}{\rm e}} \delta_\Gamma (\phi)\boldsymbol{\nu}(\phi) \kappa(\phi) 
              \end{bmatrix} =&~ \frac{1}{\mathbb{W}{\rm e}} \delta_\Gamma (\phi) \kappa(\phi) u_\nu(\phi).                            
  \end{align}
  Testing the entropy variables with the viscous stress yields:
  \begin{align}\label{eq: entropy var 2}
    \mathbf{V}\begin{bmatrix}
                0 \\
               -\nabla \cdot \boldsymbol{\tau}(\bu, \phi)
              \end{bmatrix} 
                            =&~ -\nabla\cdot \left( \boldsymbol{\tau}(\bu, \phi)\bu\right) + \boldsymbol{\tau}(\bu, \phi): \nabla \bu
  \end{align}  
  And finally testing with the body force yields:
  \begin{align}\label{eq: entropy var 3}
    \mathbf{V}\begin{bmatrix}
                0 \\
               \dfrac{1}{\mathbb{F}{\rm r}^2}\rho \boldsymbol{\jmath}
              \end{bmatrix} =  \frac{1}{\mathbb{F}{\rm r}^2}\rho \bu\cdot\boldsymbol{\jmath}.
  \end{align}  
  Addition of (\ref{eq: entropy var 1}), (\ref{eq: entropy var 2a}), (\ref{eq: entropy var 2}) and (\ref{eq: entropy var 3}) gives:
  \begin{align}\label{eq: entropy var end}
   & \mathbf{V}\begin{bmatrix}
                \bu \cdot \nabla \phi \\[8pt]
                \nabla \cdot \left(\rho \bu \otimes \bu \right) + \nabla p-\nabla \cdot \boldsymbol{\tau}+\dfrac{1}{\mathbb{F}{\rm r}^2}\rho \boldsymbol{\jmath}
                + \dfrac{1}{\mathbb{W}{\rm e}} \delta_\Gamma (\phi)\boldsymbol{\nu}(\phi) \kappa(\phi)
              \end{bmatrix} {\color{white}afafdasdfafsefsegssegsgsgseg}\nn\\
              & \quad \quad\quad \quad\quad \quad \quad \quad = -\tfrac{1}{2}\|\bu\|^2_2  \dfrac{\partial \rho}{\partial \phi} \bu \cdot \nabla \phi + \bu^T \nabla \cdot \left(\rho \bu \otimes \bu \right) \nn\\              
              &\quad \quad \quad\quad \quad\quad \quad\quad \quad+ \nabla \cdot \left( p \bu\right) - p \nabla \cdot \bu \nn\\
                              &\quad \quad \quad\quad \quad\quad \quad\quad \quad+ \frac{1}{\mathbb{F}{\rm r}^2}\rho \bu\cdot\boldsymbol{\jmath} +\frac{1}{\mathbb{F}{\rm r}^2}y \bu \cdot \nabla \rho\nn\\
                              &\quad \quad \quad\quad \quad\quad \quad\quad \quad+\frac{1}{\mathbb{W}{\rm e}} \delta(\phi) \dfrac{\nabla \phi}{\|\nabla \phi\|_{\epsilon,2}}\cdot \nabla (\bu \cdot \nabla \phi) + \frac{1}{\mathbb{W}{\rm e}} \|\nabla \phi \|_{\epsilon,2} \delta'(\phi) (\bu \cdot \nabla \phi) +\frac{1}{\mathbb{W}{\rm e}} \boldsymbol{\nu}(\phi) \kappa(\phi) u_\nu(\phi)\nn\\
                              &\quad \quad \quad\quad \quad\quad \quad\quad \quad-\nabla\cdot \left( \boldsymbol{\tau}(\bu, \phi)\bu\right) + \boldsymbol{\tau}(\bu, \phi): \nabla \bu.
  \end{align}
Recognize the operator (\ref{eq: operator}) on the fourth line of the right-hand side of (\ref{eq: entropy var end}). We may thus use \cref{lem: surface Dirac} and write:
\begin{align}\label{eq: id2}
\frac{1}{\mathbb{W}{\rm e}} \delta(\phi) \dfrac{\nabla \phi}{\|\nabla \phi\|_{\epsilon,2}}\cdot \nabla (\bu \cdot \nabla \phi) + \frac{1}{\mathbb{W}{\rm e}} \|\nabla \phi \|_{\epsilon,2} \delta'(\phi) (\bu \cdot \nabla \phi) =  \nabla \cdot \left( \frac{1}{\mathbb{W}{\rm e}} \delta_\Gamma(\phi) \bu \right)  - \frac{1}{\mathbb{W}{\rm e}}\delta_\Gamma(\phi) \nabla_\Gamma \bu.
\end{align}
  Invoking the identities (\ref{eq: identity cont}) and (\ref{eq: id2}) the expression (\ref{eq: entropy var end}) collapses to
  \begin{align}\label{eq: entropy var end2}
   & \mathbf{V}\begin{bmatrix}
                \bu \cdot \nabla \phi \\[8pt]
                \nabla \cdot \left(\rho \bu \otimes \bu \right) + \nabla p-\nabla \cdot \boldsymbol{\tau}(\bu, \phi)+\dfrac{1}{\mathbb{F}{\rm r}^2}\rho \boldsymbol{\jmath}
                + \dfrac{1}{\mathbb{W}{\rm e}} \delta_\Gamma (\phi)\boldsymbol{\nu}(\phi) \kappa(\phi)
              \end{bmatrix} {\color{white}afafdasdfafsefsegssegsgsgseg}\nn\\
              & \quad \quad\quad \quad\quad \quad \quad \quad =   \nabla \cdot \left( \tfrac{1}{2}\rho\|\bu\|^2 \bu\right)+ \tfrac{1}{2}\rho\|\bu\|^2 \nabla \cdot \bu \nn\\    
                              &\quad \quad \quad\quad \quad\quad \quad\quad \quad+ \nabla \cdot \left( p \bu\right) - p \nabla \cdot \bu \nn\\
                              &\quad \quad \quad\quad \quad\quad \quad\quad \quad+ \nabla \cdot \left(\frac{1}{\mathbb{F}{\rm r}^2} \rho y \bu \right)- \frac{1}{\mathbb{F}{\rm r}^2} \rho y \nabla \cdot\bu\nn\\
                              &\quad \quad \quad\quad \quad\quad \quad\quad \quad +\nabla \cdot \left( \frac{1}{\mathbb{W}{\rm e}} \delta_\Gamma(\phi) \bu \right)  - \nabla \cdot \left(\frac{1}{\mathbb{W}{\rm e}}\delta_\Gamma(\phi) \mathbf{P}_T \mathbf{u} \right)+ \epsilon^2\frac{1}{\mathbb{W}{\rm e}}\delta'(\phi)u_\nu = 0\nn\\
                              &\quad \quad \quad\quad \quad\quad \quad\quad \quad-\nabla\cdot \left( \boldsymbol{\tau}(\bu, \phi)\bu\right) + \boldsymbol{\tau}(\bu, \phi): \nabla \bu.
  \end{align}
  We merge the terms in (\ref{eq: entropy var end2}) and use the continuity equation (\ref{eq: strong form phi conservative cont}) to cancel the terms containing the divergence of velocity. Taking the superposition of (\ref{eq: entropy var 0}) and (\ref{eq: entropy var end2}) while recognizing $\mathcal{H}$ on the right-hand side of (\ref{eq: entropy var end2}) completes the proof.
\end{proof}

\subsection{Modified formulation}
\cref{thm: apply entropy var} implies that an energy-dissipative relation may be recovered when the functional entropy variables are available as test functions.
For standard test function spaces we can not select the weight $V_1$.
We circumvent this issue, similar as in \cite{liu2013functional}, by explicitly adding $V_1$ as a new unknown $v$ to the system of equations.
Thus we introduce the extra variable:
\begin{align}\label{eq: extra var}
 v = -\frac{\varrho}{2} \|\bu\|_2^2 + \frac{1}{\mathbb{F}{\rm r}^2} \varrho y -\frac{1}{\mathbb{W}{\rm e}} \delta(\phi) \nabla \cdot \left(\dfrac{\nabla \phi}{\|\nabla \phi\|_{\epsilon,2}}\right)+ \frac{1}{\mathbb{W}{\rm e}} \delta'(\phi) \dfrac{\epsilon^2}{\|\nabla \phi\|_{\epsilon,2}}.
\end{align}
where we use the notation $\varrho = \varrho(\phi): = \rho'(\phi)$.
The question arises how to couple the extra variable (\ref{eq: extra var}) to the diffuse-interface model (\ref{eq: strong form phi conservative}).
Note that a direct consequence of (\ref{eq: extra var}) is:
\begin{align}\label{eq: new term}
  -\left(v +\frac{ \varrho}{2} \|\bu\|_2^2-\frac{1}{\mathbb{F}{\rm r}^2}\varrho  y \right)\nabla \phi=&~ \frac{1}{\mathbb{W}{\rm e}} \delta(\phi) \nabla \cdot \left(\dfrac{\nabla \phi}{\|\nabla \phi\|_{\epsilon,2}}\right)\nabla \phi - \frac{1}{\mathbb{W}{\rm e}} \delta'(\phi) \dfrac{\epsilon^2}{\|\nabla \phi\|_{\epsilon,2}}\nabla \phi\nn\\
                =&~ \frac{1}{\mathbb{W}{\rm e}} \nabla \cdot \left(\dfrac{\nabla \phi}{\|\nabla \phi\|_{\epsilon,2}}\right)\dfrac{\nabla \phi}{\|\nabla \phi\|_{\epsilon,2}} \delta_\Gamma(\phi)- \epsilon^2\frac{1}{\mathbb{W}{\rm e}} \delta'(\phi) \boldsymbol{\nu}(\phi).
\end{align}
Recall that the diffuse-interface model (\ref{eq: strong form phi conservative}) is only associated with an energy-dissipative structure for $\epsilon = 0$, see \cref{thm: LS energy balance}.
This dissipative structure does not change when performing a consistent modification.
Thus adding a suitable partition of zero based on (\ref{eq: new term}) to the momentum equation (\ref{eq: strong form phi conservative mom}) keeps the same energy behavior.
Instead, we suggest to replace the surface tension term in (\ref{eq: strong form phi conservative}), i.e.
\begin{align}
 \frac{1}{\mathbb{W}{\rm e}} \nabla \cdot \left(\dfrac{\nabla \phi}{\|\nabla \phi\|_{\epsilon,2}}\right)\dfrac{\nabla \phi}{\|\nabla \phi\|_{\epsilon,2}} \delta_\Gamma(\phi),
\end{align}
by the left-hand side of (\ref{eq: new term}), i.e.
\begin{align}
 -\left(v +\frac{ \varrho}{2} \|\bu\|_2^2-\frac{1}{\mathbb{F}{\rm r}^2}\varrho  y \right)\nabla \phi.
\end{align}
In this way we eliminate the unwanted regularization term. The new strong form writes in terms of the variables $\bu, p, \phi$ and $v$ as:
\begin{subequations}\label{eq: strong form phi v}
\begin{align}
    \partial_t (\rho(\phi)\bu) + \nabla \cdot \left(\rho(\phi) \bu \otimes \bu\right) - \nabla \cdot \boldsymbol{\tau}(\bu, \phi) + \nabla p 
  -\left(v+\frac{\varrho}{2} \|\bu\|_2^2 -\frac{1}{\mathbb{F}{\rm r}^2} \varrho y \right)\nabla \phi +\frac{1}{\mathbb{F}{\rm r}^2}\rho(\phi) \boldsymbol{\jmath}&=~ 0, \\
 \nabla \cdot \bu &=~ 0, \\
 \partial_t \phi + \bu \cdot \nabla \phi &=~0,\\
 v + \varrho\frac{\|\bu\|_2^2}{2}- \frac{1}{\mathbb{F}{\rm r}^2} \varrho y +\frac{1}{\mathbb{W}{\rm e}} \delta(\phi) \nabla \cdot \left(\dfrac{\nabla \phi}{\|\nabla \phi\|_{\epsilon,2}}\right)- \frac{1}{\mathbb{W}{\rm e}} \delta'(\phi) \dfrac{\epsilon^2}{\|\nabla \phi\|_{\epsilon,2}}&=~0,
\end{align}
\end{subequations}
with $\bu(0) = \bu_0$ and $\phi(0)=\phi_0$ in $\Omega$.
\begin{rmk}
  Even in absence of surface tension effects the substitution (\ref{eq: new term}) is essential to arrive at an energy-dissipative system.
\end{rmk}

The corresponding weak formulation reads:\\

\textit{Find $(\bu,p, \phi, v) \in \mathcal{W}_T$ such that for all $(\bw, q, \psi, \zeta) \in \mathcal{W}$:}
\begin{subequations}\label{eq: weak form LS cont v}
\begin{align}
(\bw,  \partial_t (\rho \bu))_\Omega  -(\nabla \bw, \rho \bu \otimes \bu)_\Omega - (\nabla \cdot \bw, p)_\Omega+ (\nabla \bw, \boldsymbol{\tau}(\bu,\phi))_\Omega +\frac{1}{\mathbb{F}{\rm r}^2}(\bw, \rho \boldsymbol{\jmath})_\Omega &\nn \\
-\left(\bw, v \nabla \phi\right)_\Omega - \left(\bw, \left(\frac{\varrho}{2} \|\bu\|_2^2 -\frac{1}{\mathbb{F}{\rm r}^2}\varrho y \right) \nabla \phi \right)_\Omega &=~ 0, \label{weak form LS mom cont}\\
 ( q, \nabla \cdot \bu)_\Omega &=~ 0, \label{weak form LS continuity}\\
  ( \psi, \partial_t \phi + \bu \cdot \nabla \phi )_\Omega &=~ 0, \label{weak form LS phi cont}\\
  \left(\zeta, v+\varrho\frac{\|\bu\|_2^2}{2}-\frac{1}{\mathbb{F}{\rm r}^2}\varrho y \right)_\Omega   
  -\left(\frac{1}{\mathbb{W}{\rm e}} \delta(\phi) \dfrac{\nabla \phi}{\|\nabla \phi\|_{\epsilon,2}}, \nabla \zeta\right)_\Omega - \left(\frac{1}{\mathbb{W}{\rm e}} \|\nabla \phi \|_{\epsilon,2} \delta'(\phi), \zeta\right)_\Omega  
  &=~0 , \label{weak form v cont}
\end{align}
\end{subequations}
where we recall $\varrho = \partial \rho/\partial \phi$ and have $\bu(0) = \bu_0$ and $\phi(0)=\phi_0$ in $\Omega$. Here $\mathcal{W}_T$ denotes a divergence-compatible space-time space and $\mathcal{W}$ is the test-function space. For details about the divergence-compatible space we refer to \cite{Evans13unsteadyNS}.
\begin{thm}
\label{thm: energy dissipation modified model cont}
 Let $(\bu, p, \phi)$ be a smooth solution of the weak form (\ref{eq: weak form LS cont v}). 
 The formulation (\ref{eq: weak form LS cont v}) has the properties:
 \begin{enumerate}
  \item The formulation satisfies the maximum principle for the density, i.e. without loss of generality we assume that $\rho_2 \leq \rho_1$ and then have:
  \begin{align}
     \rho_2 \leq \rho(\phi) \leq \rho_1,.
  \end{align}
  \item The formulation is divergence-free as a distribution:
  \begin{align}\label{prop 2 cont}
    \nabla \cdot \bu \equiv 0.
  \end{align}
  \item The formulation satisfies the dissipation inequality:
    \begin{align}\label{thm: cont diss}
   \frac{{\rm d}}{{\rm d}t} \mathscr{E}(\bu, \phi)=- (\nabla \bu, \boldsymbol{\tau}(\bu, \phi))_\Omega \leq 0.
 \end{align}
 \end{enumerate}
\end{thm}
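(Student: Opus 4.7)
The plan is to address the three properties in the order listed. For property (1), I would simply observe that the regularized Heaviside $H_\eps$ maps $\mathbb{R}$ into $[0,1]$ by construction, so $\rho(\phi)=\rho_1 H_\eps(\phi)+\rho_2(1-H_\eps(\phi))$ is a pointwise convex combination of $\rho_1$ and $\rho_2$; the bound $\rho_2\le\rho(\phi)\le\rho_1$ then follows immediately, independently of the evolution of $\phi$. For property (2), I would use the divergence-compatibility of the space $\mathcal{W}_T$, which ensures that $\nabla\cdot\bu$ lies in the pressure test space; testing (\ref{weak form LS continuity}) with $q=\nabla\cdot\bu$ then yields $\|\nabla\cdot\bu\|_{L^2(\Omega)}^2=0$, i.e.\ (\ref{prop 2 cont}).

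Property (3) is the main content, and the idea is to test with the functional entropy variables of (\ref{eq: functional derivatives explicit}). Concretely I would take $\bw=\bu$ in (\ref{weak form LS mom cont}), $\psi=v+\tfrac{\varrho}{2}\|\bu\|_2^2-\tfrac{1}{\mathbb{F}{\rm r}^2}\varrho y$ in (\ref{weak form LS phi cont}), and $\zeta=\partial_t\phi$ in (\ref{weak form v cont}). Using property (2), the pressure term in the momentum equation drops and the convective term reduces, after integration by parts, to $\tfrac12(\|\bu\|_2^2,\varrho\,\bu\cdot\nabla\phi)_\Omega$ via $\nabla\cdot(\rho\bu)=\varrho\,\bu\cdot\nabla\phi$. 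Combining this with the chain-rule identity $\tfrac{d}{dt}\mathscr{E}^{\text{K}}=(\bu,\partial_t(\rho\bu))_\Omega-\tfrac12(\|\bu\|_2^2,\varrho\,\partial_t\phi)_\Omega$ and the identity $\bu\cdot\nabla\phi=-\partial_t\phi$ from (\ref{weak form LS phi cont}), the kinetic-energy evolution becomes a linear combination of the viscous dissipation, the gravitational work, and the cross terms $(v,\partial_t\phi)_\Omega$, $(\varrho\|\bu\|_2^2/2,\partial_t\phi)_\Omega$ and $(\varrho y/\mathbb{F}{\rm r}^2,\partial_t\phi)_\Omega$ originating from the surface-tension-like forcing.

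Next, testing (\ref{weak form v cont}) with $\zeta=\partial_t\phi$ reproduces exactly $\tfrac{d}{dt}\mathscr{E}^{\text{S}}$, since
\begin{align*}
\partial_t\delta_\Gamma(\phi)
 = \delta'(\phi)\,\partial_t\phi\,\|\nabla\phi\|_{\epsilon,2}
 + \delta(\phi)\,\tfrac{\nabla\phi\cdot\nabla\partial_t\phi}{\|\nabla\phi\|_{\epsilon,2}},
\end{align*}
whose integral matches the two gradient-type terms in (\ref{weak form v cont}) after the homogeneous boundary conditions are applied. The resulting identity couples the surface-energy rate to exactly the same cross terms produced by the momentum test, which then telescope in pairs. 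Finally, $\tfrac{d}{dt}\mathscr{E}^{\text{G}}=\tfrac{1}{\mathbb{F}{\rm r}^2}(y,\varrho\,\partial_t\phi)_\Omega=\tfrac{1}{\mathbb{F}{\rm r}^2}(\rho\boldsymbol{\jmath},\bu)_\Omega$ by an integration by parts that uses property (2) and $\nabla y=\boldsymbol{\jmath}$; this absorbs the gravitational work from the momentum balance.

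The main obstacle will be the bookkeeping of all sign-sensitive cross terms and, in particular, verifying that the regularization artefact $\epsilon^2\delta'(\phi)/\|\nabla\phi\|_{\epsilon,2}$ embedded in the definition (\ref{eq: extra var}) of $v$ is exactly cancelled by its counterpart in the momentum substitution (\ref{eq: new term}). This cancellation is precisely what distinguishes the present theorem from \cref{thm: LS energy balance}, whose analogue carried an unwanted $\epsilon^2\delta'(\phi)u_\nu$ contribution; it is also the purpose of introducing the functional entropy variable $v$. Once all cross terms telescope, only $-(\nabla\bu,\boldsymbol{\tau}(\bu,\phi))_\Omega$ survives, which is non-positive since $\boldsymbol{\tau}(\bu,\phi)=2\mu(\phi)\nabla^s\bu$ with $\mu(\phi)\ge 0$, giving (\ref{thm: cont diss}).
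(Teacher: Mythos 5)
Your proposal is correct and follows essentially the same route as the paper: properties (1) and (2) are handled identically, and for (3) you test with the functional entropy variables, use the pointwise divergence-free property to remove the pressure and convective contributions, and let the auxiliary equation reproduce $\tfrac{\rm d}{{\rm d}t}\mathscr{E}^{\text{S}}$ so that the $v$-cross terms telescope against the momentum equation. The only (immaterial) difference is your choice $\psi=v+\tfrac{\varrho}{2}\|\bu\|_2^2-\tfrac{1}{\mathbb{F}{\rm r}^2}\varrho y$ and $\zeta=\partial_t\phi$, whereas the paper takes $\psi=v$ and $\zeta=-\partial_t\phi$; the two combinations differ only by a multiple of the level-set residual, which vanishes for smooth solutions.
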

Dissipation inequality (\ref{thm: cont diss}) is not equipped with terms supported on the outer boundary $\partial \Omega$ since these vanish due to assumed boundary conditions.
\begin{proof}

1. This is a direct consequence of the definition of $\rho=\rho(\phi)$.\\

2. The divergence-conforming space allows to take $q = \nabla \cdot \bu$ in (\ref{weak form LS continuity}) and hence we find:
\begin{align}
 0= \left( \nabla \cdot \bu, \nabla \cdot \bu\right)_\Omega \quad \Rightarrow \quad \nabla \cdot \bu \equiv 0.
\end{align}

3. Selection of the weights
$\psi=v$ in (\ref{weak form LS phi cont}) and $\zeta = -\partial_t \phi$ in (\ref{weak form v cont}) yields:
\begin{subequations}\label{eq: cont proof weights 1 cont}
\begin{align}
   (v, \partial_t \phi+ \bu \cdot \nabla  \phi)_\Omega =&~ 0,\\
  -\left( \partial_t \phi, v+\varrho\frac{\|\bu\|_2^2}{2}- \frac{1}{\mathbb{F}{\rm r}^2}\varrho y\right)_\Omega +\left(\frac{1}{\mathbb{W}{\rm e}} \delta(\phi) \dfrac{\nabla \phi}{\|\nabla \phi\|_{\epsilon,2}}, \nabla \partial_t \phi\right)_\Omega + \left(\frac{1}{\mathbb{W}{\rm e}} \|\nabla \phi \|_{\epsilon,2} \delta'(\phi), \partial_t \phi\right)_\Omega=&~0.
\end{align}
\end{subequations}
We add the equations (\ref{eq: cont proof weights 1 cont}) and find:
\begin{align}\label{eq: cont combining}
 (v, \bu \cdot \nabla  \phi)_\Omega- \left( \frac{\varrho}{2}\|\bu\|_2^2, \partial_t \phi \right)_\Omega 
 +\frac{1}{\mathbb{F}{\rm r}^2}\left( \partial_t \phi,  \varrho y\right)_\Omega  +\left(\frac{1}{\mathbb{W}{\rm e}} \delta(\phi) \dfrac{\nabla \phi}{\|\nabla \phi\|_{\epsilon,2}}, \nabla \partial_t \phi\right)_\Omega &\nn\\ + \left(\frac{1}{\mathbb{W}{\rm e}} \|\nabla \phi \|_{\epsilon,2} \delta'(\phi), \partial_t \phi\right)_\Omega &~= 0.
\end{align}
Performing integration by parts yields:
\begin{align}\label{eq: cont combining IP}
  \left(\partial_t \phi, -\frac{\varrho}{2}\|\bu\|_2^2 +\varrho \frac{1}{\mathbb{F}{\rm r}^2} y -\frac{1}{\mathbb{W}{\rm e}} \delta(\phi) \nabla \cdot \left(\dfrac{\nabla \phi}{\|\nabla \phi\|_{\epsilon,2}}\right)+ \frac{1}{\mathbb{W}{\rm e}} \delta'(\phi) \dfrac{\epsilon^2}{\|\nabla \phi\|_{\epsilon,2}} \right)_\Omega =&~ -(v, \bu \cdot \nabla  \phi)_\Omega.
\end{align}
Recall that the line integral terms vanish due to auxiliary boundary conditions.
Noting that $ \dfrac{\updelta \mathcal{H}}{\updelta \phi} = -\dfrac{\varrho}{2} \|\bu\|_2^2 + \varrho \dfrac{1}{\mathbb{F}{\rm r}^2}y -\dfrac{1}{\mathbb{W}{\rm e}} \delta(\phi) \nabla \cdot \left(\dfrac{\nabla \phi}{\|\nabla \phi\|_{\epsilon,2}}\right)+ \dfrac{1}{\mathbb{W}{\rm e}} \delta'(\phi) \dfrac{\epsilon^2}{\|\nabla \phi\|_{\epsilon,2}} $ we arrive at:
\begin{align}\label{eq: cont combining IP 3}
 \dfrac{\updelta \mathscr{E}}{\updelta \phi}\left[\dfrac{\partial \phi}{\partial t}\right]:=\left(\dfrac{\partial \phi}{\partial t},\dfrac{\updelta \mathcal{H}}{\updelta \phi} \right)_\Omega  =&~ -(v, \bu \cdot \nabla  \phi)_\Omega .
\end{align}
Next we take $\bw = \bu$ in (\ref{weak form LS mom cont}) to get:
\begin{align}\label{eq: cont combining 2}
 (\bu,  \partial_t (\rho  \bu))_\Omega   -(\nabla \bu, \rho \bu \otimes \bu)_\Omega - (\bw, \tfrac{1}{2}\|\bu\|_2^2\varrho(\phi)\nabla \phi)_\Omega  - (\nabla \cdot \bu, p)_\Omega & \nn\\
 + (\nabla \bu, \boldsymbol{\tau}(\bu,\phi))_\Omega  -\left(\bu, v \nabla \phi\right)_\Omega+ \frac{1}{\mathbb{F}{\rm r}^2}\left(\bu, \varrho y \nabla \phi \right)_\Omega + \frac{1}{\mathbb{F}{\rm r}^2}(\bu, \rho \boldsymbol{\jmath})_\Omega &=0. 
\end{align}
From the identities (\ref{eq: identity cont}), the continuity equation (\ref{prop 2 cont}), homogeneous boundary conditions and integration by parts we extract the identities:
\begin{subequations}\label{eq: props cont}
\begin{align}
 -(\nabla \bu, \rho \bu \otimes \bu)_\Omega - (\bu, \tfrac{1}{2}\|\bu\|_2^2\varrho(\phi)\nabla \phi)_\Omega &~=0 \\[6 pt]
 - (\nabla \cdot \bu, p)_\Omega &~=0,\\
   \frac{1}{\mathbb{F}{\rm r}^2}\left(\bu, \varrho y \nabla \phi \right)_\Omega+\frac{1}{\mathbb{F}{\rm r}^2}(\bu, \rho \boldsymbol{\jmath})_\Omega &~= 0.
\end{align}
\end{subequations}
Noting that $ \dfrac{\updelta \mathcal{H}}{\updelta (\rho \bu)} = \bu^T$ and employing (\ref{eq: props cont}) we arrive at:
\begin{align}\label{eq: cont combining 2 a cont}
\dfrac{\updelta \mathscr{E}}{\updelta (\rho\bu)}\left[\dfrac{\partial (\rho \bu)}{\partial t}\right]:=\left(\dfrac{\partial (\rho\bu)}{\partial t},\dfrac{\updelta \mathcal{H}}{\updelta (\rho\bu)} \right)_\Omega =&~ - (\nabla \bu, \boldsymbol{\tau}(\bu,\phi))_\Omega  +\left(\bu, v \nabla \phi\right)_\Omega.
\end{align}
Addition of (\ref{eq: cont combining IP 3}) and (\ref{eq: cont combining 2 a cont}) yields:
\begin{align}\label{eq: cont combining 3}
\dt \mathscr{E} = \dfrac{\updelta \mathscr{E}}{\updelta \phi}\left[\dfrac{\partial \phi}{\partial t}\right] + \dfrac{\updelta \mathscr{E}}{\updelta (\rho\bu)}\left[\dfrac{\partial (\rho \bu)}{\partial t}\right]
= - (\nabla \bu, \boldsymbol{\tau}(\bu,\phi))_\Omega.
\end{align}
\end{proof}

\section{Energy-dissipative spatial discretization}\label{sec: Energy-dissipative discretization}
In this Section we present the spatial discretization of the modified model (\ref{eq: weak form LS cont v}). First we introduce some notation, then discuss the stabilization mechanisms and subsequently provide the semi-discrete formulation.

\subsection{Notation}
We employ an isogeometric analysis discretization.
To provide the appropriate setting, we introduce the parametric domain denoted as $\hat{\Omega} := (-1,1)^d \subset \mathbb{R}^d$ with corresponding mesh $\mathcal{M}$. 
The element size $h_Q = \text{diag}(Q)$ of an element $Q$ in $\mathcal{M}$ is its diagonal length.
The physical domain $\Omega \subset \mathbb{R}^d$ follows as usual via the continuously differentiable geometrical map (with continuously differentiable inverse) $\mathbf{F}:\hat{\Omega} \rightarrow \Omega$ and the corresponding physical mesh reads:
\begin{align}
  \mathcal{K} = \mathbf{F}(\mathcal{M}):= \left\{\Omega_K: \Omega_K= \mathbf{F}(Q), Q \in \mathcal{M} \right\}.
\end{align}
The Jacobian mapping is $\mathbf{J} = \partial \mathbf{x}/\partial \boldsymbol{\xi}$. The physical mesh size $h_K$ is given by
\begin{align}
  h_{K}^2 = \frac{h_Q^2}{d} \| \mathbf{J} \|_F^2,
\end{align}
with the subscript $F$ referring to the Frobenius norm. Note that on a Cartesian mesh it reduces to the diagonal-length of an element.
The element metric tensor reads
\begin{align}
  \mathbf{G} = \dfrac{\partial \boldsymbol{\xi}}{\partial \mathbf{x}}^T \dfrac{\partial \boldsymbol{\xi}}{\partial \mathbf{x}} = \mathbf{J}^{-T} \mathbf{J}^{-1},
\end{align}
with inverse
\begin{align}
  \mathbf{G}^{-1} = \dfrac{\partial \mathbf{x}}{\partial \boldsymbol{\xi}} \dfrac{\partial \mathbf{x}}{\partial \boldsymbol{\xi}}^T = \mathbf{J} \mathbf{J}^{T}.
\end{align}
Using the metric tensor we see that the Frobenius norm is objective:
\begin{align}
  \| \mathbf{J} \|_F^2 = {\rm Tr}\left(\mathbf{G}^{-1}\right),
\end{align}
where ${\rm Tr}$ denotes the trace operator.

We define approximation spaces $\WW^h_T \subset \WW_T, \WW^h \subset \WW$ spanned by finite element or NURBS basis functions. Recall that we utilize the div-conforming function spaces proposed by Evans et al. \cite{Evans13steadyNS, Evans13unsteadyNS}. Furthermore, we use the conventional notation superscript $h$ to indicate the discretized (vector) field of the corresponding quantity. 

\subsection{Stabilization}
It is well-known that a plain Galerkin discretization is prone to the development of numerical instabilities. This motivates the use of stabilization mechanisms. We employ the standard SUPG stabilization \cite{BroHug82} for the level-set convection, i.e. we augment the discrete level-set equation with
\begin{align}
 +  \displaystyle\sum_K \left(\tau_K \bu^h \cdot \nabla \psi^h, \mathscr{R}_{I} \phi^h\right)_{\Omega_K},
\end{align}
with residual
\begin{align}
 \mathscr{R}_{I} \phi^h :=\partial_t \phi^h + \bu^h \cdot \nabla \phi^h.
\end{align}
We use the standard definition for stabilization parameter $\tau$ as also given in \cite{EiAk17i}.
To ensure that the stabilization term does not upset the energetic stability property we balance it with the term:
\begin{align}
-  \displaystyle\sum_K \left(\tau_K \bw^h \cdot \nabla v^h, \mathscr{R}_{I} \phi^h\right)_{\Omega_K}
\end{align}
in the momentum equation.

\begin{rmk}
In the current paper we focus on an energy-dissipative method without multiscale stabilization contributions in the momentum equation such as \cite{BaCaCoHu07}.
Standard stabilized methods are not directly associated with an energy dissipative property and thus specific techniques are required to establish such a property, see e.g. \cite{principe2010dissipative, EiAk17ii, evans2020variational}.
We note that these methods are developed for the single-fluid case.
An extension to the current two-fluid case may be the topic of another paper.
\end{rmk}

A popular method to stabilize the momentum equation is to use discontinuity capturing devices.
We follow this road and augment the momentum equation with the discontinuity capturing term:
\begin{align}
 +  \displaystyle\sum_K \left(\nabla \bw^h, \theta_K \nabla \bu^h \right)_{\Omega_K}.
\end{align}
The discontinuity capturing viscosity is given by:
\begin{align}
 \theta_K = \mathcal{C} h_K \dfrac{\|\pmb{\mathscr{R}}_M(\rho^h \bu^h)\|_{\epsilon,2}}{\|\nabla \bu^h\|_{\epsilon,2}},
\end{align}
with conservative momentum residual
\begin{align}
   \pmb{\mathscr{R}}_M (\rho^h \bu^h):= \partial_t (\rho^h \bu^h) + \nabla \cdot (\rho^h \bu^h\otimes \bu^h) + \nabla \cdot \boldsymbol{\tau}(\bu^h, \phi^h) + \nabla p^h + \dfrac{1}{\mathbb{W}{\rm e}} \delta (\phi^h) \kappa \nabla \phi^h + \dfrac{1}{\mathbb{F}{\rm r}^2} \rho^h \boldsymbol{\jmath},
\end{align}
and $\mathcal{C}$ a user-defined constant.
The term clearly dissipates energy.
\begin{rmk}
  In order to avoid evaluating second derivatives in the surface tension contribution, one may project the residual onto the mesh and subsequently use \cref{prop: id1 main}.
\end{rmk}
\begin{rmk}
 Even though we present the stabilization and discontinuity capturing terms in an \textit{ad hoc} fashion, we wish to emphasize that these may be derived with the aid of the multiscale framework. The natural derivation for discontinuity capturing terms can be found in \cite{ten2020theoretical}.
\end{rmk}

\subsection{Semi-discrete formulation}\label{sec: Semi-discrete formulation}
The semi-discrete approximation of (\ref{eq: weak form LS cont v}) is stated as follows:\\

\textit{Find $(\bu^h, p^h, \phi^h, v^h) \in \WW^h_T$ such that for all $(\bw^h, q^h, \psi^h, \zeta^h) \in \WW^h$:}
\begin{subequations}\label{eq: weak form semi-discrete0}
\begin{align}
(\bw^h, \partial_t (\rho^h \bu^h))_\Omega -(\nabla \bw^h, \rho^h \bu^h \otimes \bu^h)_\Omega
- (\nabla \cdot \bw^h, p^h)_\Omega+ \left(\nabla \bw^h, \boldsymbol{\tau}(\bu^h,\phi^h)\right)_\Omega &\nn\\   
 +\frac{1}{\mathbb{F}{\rm r}^2}(\bw^h, \rho^h \boldsymbol{\jmath})_\Omega-\left(\bw^h, v^h \nabla \phi^h\right)_\Omega - \left(\bw^h, \varrho(\phi^h)\left(\frac{\|\bu^h\|_2^2}{2}-\frac{1}{\mathbb{F}{\rm r}^2} y\right) \nabla \phi^h \right)_\Omega&\nn\\
 +  \displaystyle\sum_K \left(\nabla \bw^h, \theta_K \nabla \bu^h \right)_{\Omega_K}-  \displaystyle\sum_K \left(\tau_K \bw^h \cdot \nabla v^h, \mathscr{R}_{I} \phi^h \right)_{\Omega_K} &=~ 0, \label{weak form LS mom cont 2 semi}\\
( q^h, \nabla \cdot \bu^h)_\Omega &=~ 0, \label{weak form LS continuity semi}\\
  \left(\psi^h, \partial_t \phi^h + \bu^h \cdot \nabla \phi^h\right)_\Omega +  \displaystyle\sum_K \left(\tau_K \bu^h \cdot \nabla \psi^h, \mathscr{R}_{I} \phi^h\right)_{\Omega_K}  &=~0, \label{weak form LS phi cont semi}\\
  \left(\zeta^h, v^h+\varrho(\phi^h)\left(\frac{\|\bu^h\|_2^2}{2}-\frac{1}{\mathbb{F}{\rm r}^2} y\right)\right)_\Omega -\left(\frac{1}{\mathbb{W}{\rm e}} \delta(\phi^h) \dfrac{\nabla \phi^h}{\|\nabla \phi^h\|_{\epsilon,2}}, \nabla \zeta^h\right)_\Omega &\nn\\
  - \left(\frac{1}{\mathbb{W}{\rm e}} \|\nabla \phi^h \|_{\epsilon,2} \delta'(\phi^h), \zeta^h\right)_\Omega  &=~0 . \label{weak form v cont semi}
\end{align}
\end{subequations}
where we recall $\varrho^h=\varrho(\phi^h)$ and $\bu^h(0) = \bu_0$ and $\phi^h(0)=\phi_0$ in $\Omega$. The initial fields $\bu_0$ and $\phi_0$ are obtained via standard $L^2$-projections onto the mesh.
The density and fluid viscosity are computed as
\begin{subequations}\label{eq: rho mu LS approximate}
\label{eq:weak}
\begin{align}
 \rho^h\equiv \rho(\phi^h), \label{eq: rho H eps}\\
 \mu^h\equiv \mu(\phi^h). \label{eq: mu H eps}
\end{align}
\end{subequations}
The discrete counterparts of the kinetic, gravitational and surface energy are:
\begin{subequations}\label{eq: discrete energies}
\begin{align}
 \mathscr{E}^{\text{K},h}\equiv&~ \mathscr{E}^{\text{K}}(\bu^h; \phi^h) ,\\
 \mathscr{E}^{\text{G},h}\equiv&~ \mathscr{E}^{\text{G}}(\phi^h) ,\\
 \mathscr{E}^{\text{S},h}\equiv&~ \mathscr{E}^{\text{S}}(\phi^h) .
\end{align}
\end{subequations}
The total energy is the superposition of the separate energies:
\begin{align}
 \mathscr{E}^h :=&~\mathscr{E}^{\text{K},h} + \mathscr{E}^{\text{G},h} + \mathscr{E}^{\text{S},h}.
\end{align}
Similarly, the semi-discrete local energy reads
\begin{align}
  \mathcal{H}^h \equiv \mathcal{H}(\mathbf{U}^h).
\end{align}
The semi-discrete formulation (\ref{eq: weak form semi-discrete0}) inherits to a large extend \cref{thm: energy dissipation modified model cont}. The notable difference lies in the usage of stabilization terms.
\begin{thm}
\label{thm: energy dissipation modified model semi}
 Let $(\bu^h, p^h, \phi^h, v^h)$ be a smooth solution of the weak form of incompressible Navier-Stokes equations with surface tension (\ref{eq: weak form semi-discrete0}).
 The formulation (\ref{eq: weak form semi-discrete0}) has the properties:
 \begin{enumerate}
  \item The formulation satisfies the maximum principle for the density, i.e. without loss of generality we assume that $\rho_2 \leq \rho_1$ and then have:
  \begin{align}
     \rho_2 \leq \rho(\phi^h) \leq \rho_1.
  \end{align}
  \item The formulation is divergence-free as a distribution:
  \begin{align}\label{prop 2 semi-discrete}
    \nabla \cdot \bu^h \equiv 0.
  \end{align}
  \item The formulation satisfies the dissipation inequality:
 \begin{align}
   \frac{{\rm d}}{{\rm d}t} \mathscr{E}^h =- \left(\nabla \bu^h,\boldsymbol{\tau}\left(\bu^h, \phi^h\right)\right)_\Omega - \displaystyle\sum_K \left(\nabla \bu^h, \theta_K \nabla \bu^h\right)_{\Omega_K}  \leq 0.
 \end{align}
 \end{enumerate}
\end{thm}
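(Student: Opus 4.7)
The plan is to mimic the proof of \cref{thm: energy dissipation modified model cont}, taking care of the two stabilization contributions. Parts 1 and 2 are essentially immediate: the maximum principle follows from the regularized definition $\rho(\phi^h) = \rho_1 H_\eps(\phi^h) + \rho_2(1-H_\eps(\phi^h))$ with $H_\eps\in[0,1]$, and the pointwise divergence-free property is obtained by selecting $q^h = \nabla\cdot\bu^h$ in (\ref{weak form LS continuity semi}), which is admissible because $\nabla\cdot\bu^h$ belongs to the pressure space of the divergence-conforming pair.

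For the dissipation inequality I would follow the same sequence of test-function selections as in the continuous case. First, I take $\psi^h = v^h$ in (\ref{weak form LS phi cont semi}) and $\zeta^h = -\partial_t \phi^h$ in (\ref{weak form v cont semi}). Adding the two equations, the algebraic $v^h \partial_t \phi^h$ terms cancel and, after an integration by parts on the surface-tension piece (permissible entirely within the discrete space), one recovers exactly the identity
\begin{align*}
\dt\left(\mathscr{E}^{\text{G},h}+\mathscr{E}^{\text{S},h}\right)
-\left(\tfrac{\varrho(\phi^h)}{2}\|\bu^h\|_2^2,\partial_t\phi^h\right)_\Omega
=-(v^h,\bu^h\cdot\nabla\phi^h)_\Omega+\sum_K\left(\tau_K\bu^h\cdot\nabla v^h,\mathscr{R}_I\phi^h\right)_{\Omega_K},
\end{align*}
where now the residual SUPG contribution is kept for subsequent cancellation rather than discarded.

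Next I pick $\bw^h = \bu^h$ in (\ref{weak form LS mom cont 2 semi}) and $q^h = p^h$ in (\ref{weak form LS continuity semi}). Using the identities (\ref{eq: props cont}), which carry over verbatim since $\nabla\cdot\bu^h\equiv 0$ and homogeneous boundary conditions are assumed, the convective, pressure and gravitational terms collapse exactly as in the continuous proof, leaving
\begin{align*}
\dt\mathscr{E}^{\text{K},h}
+\left(\tfrac{\varrho(\phi^h)}{2}\|\bu^h\|_2^2,\partial_t\phi^h\right)_\Omega
=&-\bigl(\nabla\bu^h,\boldsymbol{\tau}(\bu^h,\phi^h)\bigr)_\Omega
+\left(\bu^h,v^h\nabla\phi^h\right)_\Omega \\
&-\sum_K\bigl(\nabla\bu^h,\theta_K\nabla\bu^h\bigr)_{\Omega_K}
-\sum_K\left(\tau_K\bu^h\cdot\nabla v^h,\mathscr{R}_I\phi^h\right)_{\Omega_K}.
\end{align*}
Adding this to the previous identity, the $(v^h,\bu^h\cdot\nabla\phi^h)_\Omega$ coupling disappears, the $\tfrac{\varrho}{2}\|\bu^h\|_2^2\partial_t\phi^h$ contributions cancel, and — the key point — the SUPG residual contribution from the momentum equation is the exact negative of the one produced by the level-set equation, so the two also cancel. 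What survives is precisely the claimed dissipation identity, with the discontinuity-capturing piece providing an additional non-negative sink because $\theta_K\geq 0$ by construction.

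The main obstacle I anticipate is bookkeeping the SUPG signs: the balancing term in (\ref{weak form LS mom cont 2 semi}) was introduced with a minus sign specifically so that the choice $\bw^h=\bu^h$ produces the opposite of the SUPG term generated by $\psi^h=v^h$ in (\ref{weak form LS phi cont semi}). Checking this cancellation, together with verifying that the discrete integration by parts on $\delta(\phi^h)\nabla\phi^h/\|\nabla\phi^h\|_{\epsilon,2}$ produces no surviving boundary contributions under the assumed conditions, are the only places where the argument differs substantively from the continuous proof.
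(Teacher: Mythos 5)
Your proposal is correct and follows the paper's own proof essentially verbatim: the same test-function selections ($\psi^h=v^h$, $\zeta^h=-\partial_t\phi^h$, $\bw^h=\bu^h$), the same discrete analogues of the identities (\ref{eq: props cont}) enabled by $\nabla\cdot\bu^h\equiv 0$, and the same pairwise cancellation of the SUPG contributions, with the discontinuity-capturing term surviving as an additional non-negative sink. The only blemish is that the SUPG terms in both of your intermediate displays carry the opposite sign from what the computation actually yields (the level-set/auxiliary pair produces $-\sum_K\left(\tau_K\bu^h\cdot\nabla v^h,\mathscr{R}_I\phi^h\right)_{\Omega_K}$ and the momentum equation produces $+\sum_K\left(\tau_K\bu^h\cdot\nabla v^h,\mathscr{R}_I\phi^h\right)_{\Omega_K}$); since both of your signs are flipped consistently the two terms still cancel in the sum, so the final dissipation identity is unaffected.
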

The proof of \cref{thm: energy dissipation modified model semi} goes along the same lines as that of \cref{thm: energy dissipation modified model cont}.
\begin{proof}
1 $\&$ 2. The first two properties are directly inherited from the continuous case. Note that the weighting function choice for the second property is in general not permitted. The specific NURBS function spaces proposed by Evans et al. \cite{Evans13steadyNS, Evans13unsteadyNS} do allow this selection.\\

3. Selection of the weights 
$\psi^h=v^h$ in (\ref{weak form LS phi cont semi}) and $\zeta^h = -\partial_t \phi^h$ in (\ref{weak form v cont semi}) gives:
\begin{subequations}\label{eq: cont proof weights 1}
\begin{align}
   \left(v^h, \partial_t \phi^h + \bu^h \cdot \nabla \phi^h\right)_\Omega +  \displaystyle\sum_K \left(\tau_K \bu^h \cdot \nabla v^h, \mathscr{R}_I\phi^h\right)_{\Omega_K}  &~= 0,\\
  -\left( \partial_t \phi^h, v^h+\varrho^h\frac{\|\bu\|_2^2}{2}- \frac{1}{\mathbb{F}{\rm r}^2}\varrho^h y\right)_\Omega +\left(\frac{1}{\mathbb{W}{\rm e}} \delta(\phi^h) \dfrac{\nabla \phi^h}{\|\nabla \phi^h\|_{\epsilon,2}}, \nabla \partial_t \phi^h\right)_\Omega &\nn\\
  + \left(\frac{1}{\mathbb{W}{\rm e}} \|\nabla \phi^h \|_{\epsilon,2} \delta'(\phi^h), \partial_t \phi^h\right)_\Omega &~=0.
\end{align}
\end{subequations}
Addition of the equations (\ref{eq: cont proof weights 1}) results in:
\begin{align}\label{eq: cont combining}
 (v^h, \bu^h \cdot \nabla  \phi^h)_\Omega- \left( \frac{\varrho^h}{2}\|\bu^h\|_2^2, \partial_t \phi^h \right)_\Omega 
 +\frac{1}{\mathbb{F}{\rm r}^2}\left( \partial_t \phi^h,  \varrho^h y\right)_\Omega  +\left(\frac{1}{\mathbb{W}{\rm e}} \delta(\phi^h) \dfrac{\nabla \phi^h}{\|\nabla \phi^h\|_{\epsilon,2}}, \nabla \partial_t \phi^h\right)_\Omega & \nn\\
 + \left(\frac{1}{\mathbb{W}{\rm e}} \|\nabla \phi^h \|_{\epsilon,2} \delta'(\phi^h), \partial_t \phi^h\right)_\Omega-  \displaystyle\sum_K \left(\tau_K \bu^h \cdot \nabla v^h, \mathscr{R}_{I} \phi^h\right)_{\Omega_K} &~= 0.
\end{align}
By performing integration by parts we obtain:
\begin{align}\label{eq: cont combining IP}
  \left(\partial_t \phi^h, -\frac{\varrho^h}{2}\|\bu^h\|_2^2 +\varrho^h \frac{1}{\mathbb{F}{\rm r}^2} y -\frac{1}{\mathbb{W}{\rm e}} \delta(\phi^h) \nabla \cdot \right. &\left. \left(\dfrac{\nabla \phi^h}{\|\nabla \phi^h\|_{\epsilon,2}}\right) + \frac{1}{\mathbb{W}{\rm e}} \delta'(\phi) \dfrac{\epsilon^2}{\|\nabla \phi\|_{\epsilon,2}}\right)_\Omega \nn\\[8pt]
  =& -(v^h, \bu^h \cdot \nabla  \phi^h)_\Omega -  \displaystyle\sum_K \left(\tau_K \bu^h \cdot \nabla v^h, \mathscr{R}_{I} \phi^h \right)_{\Omega_K}.
\end{align}
Recognize $ \dfrac{\updelta \mathcal{H}^h}{\updelta \phi^h}$ on the left-hand side to arrive at:
\begin{align}\label{eq: cont combining IP}
 \dfrac{\updelta \mathscr{E}^h}{\updelta \phi^h}\left[\dfrac{\partial \phi^h}{\partial t}\right]:=\left(\dfrac{\partial \phi^h}{\partial t},\dfrac{\updelta \mathcal{H}^h}{\updelta \phi^h} \right)_\Omega  =&~ -(v^h, \bu^h \cdot \nabla  \phi^h)_\Omega\nn\\
 &~-  \displaystyle\sum_K \left(\tau_K \bu^h \cdot \nabla v^h, \mathscr{R}_{I} \phi^h\right)_{\Omega_K}.
\end{align}
Next we take $\bw^h = \bu^h$ in (\ref{weak form LS mom cont}) to get:
\begin{align}\label{eq: cont combining 2}
 (\bu^h,  \partial_t (\rho^h  \bu^h))_\Omega   -(\nabla \bu^h, \rho^h \bu^h \otimes \bu^h)_\Omega - (\bu^h, \tfrac{1}{2}\|\bu^h\|_2^2\varrho^h\nabla \phi^h)_\Omega  - (\nabla \cdot \bu^h, p^h)_\Omega & \nn\\
 + (\nabla \bu^h, \boldsymbol{\tau}(\bu^h,\phi^h))_\Omega  -\left(\bu^h, v^h \nabla \phi^h\right)_\Omega+ \frac{1}{\mathbb{F}{\rm r}^2}\left(\bu^h, \varrho^h y \nabla \phi^h \right)_\Omega + \frac{1}{\mathbb{F}{\rm r}^2}(\bu^h, \rho^h \boldsymbol{\jmath})_\Omega & \nn\\
 +  \displaystyle\sum_K \left(\nabla \bu^h, \theta_K \nabla \bu^h \right)_{\Omega_K} -  \displaystyle\sum_K \left(\tau_K \bu^h \cdot \nabla v^h, \mathscr{R}_{I} \phi^h\right)_{\Omega_K} &=0. 
\end{align}
Similar as in the continuous case, we have the identities:
\begin{subequations}\label{eq: props}
\begin{align}
 -(\nabla \bu^h, \rho^h \bu^h \otimes \bu^h)_\Omega - (\bu^h, \tfrac{1}{2}\|\bu^h\|_2^2\varrho^h\nabla \phi^h)_\Omega &~=0 \\[6pt]
 - (\nabla \cdot \bu^h, p^h)_\Omega &~=0, \\
   \frac{1}{\mathbb{F}{\rm r}^2}\left(\bu^h, \varrho^h y \nabla \phi^h \right)_\Omega+\frac{1}{\mathbb{F}{\rm r}^2}(\bu^h, \rho^h \boldsymbol{\jmath})_\Omega &~= 0.
\end{align}
\end{subequations}
Noting that $ \dfrac{\updelta \mathcal{H}^h}{\updelta (\rho^h \bu^h)} = (\bu^h)^T$ and employing (\ref{eq: props}) we arrive at:
\begin{align}\label{eq: cont combining 2 a}
\dfrac{\updelta \mathscr{E}^h}{\updelta (\rho^h\bu^h)}\left[\dfrac{\partial (\rho^h \bu^h)}{\partial t}\right]:=\left(\dfrac{\partial (\rho^h\bu^h)}{\partial t},\dfrac{\updelta \mathcal{H}^h}{\updelta (\rho^h\bu^h)} \right)_\Omega =&~ - (\nabla \bu^h, \boldsymbol{\tau}(\bu^h,\phi^h))_\Omega  +\left(\bu^h, v^h \nabla \phi^h\right)_\Omega \nn\\
&~- \displaystyle\sum_K \left(\nabla \bw^h, \theta_K \nabla \bu^h \right)_{\Omega_K}\nn\\
&~+ \displaystyle\sum_K \left(\tau_K \bu^h \cdot \nabla v^h, \mathscr{R}_{I} \phi^h\right)_{\Omega_K}.
\end{align}
The superposition of (\ref{eq: cont combining IP}) and (\ref{eq: cont combining 2 a}) yields:
\begin{align}\label{eq: cont combining 3}
\dt \mathscr{E}^h = \dfrac{\updelta \mathscr{E}^h}{\updelta \phi^h}\left[\dfrac{\partial \phi^h}{\partial t}\right] + \dfrac{\updelta \mathscr{E}^h}{\updelta (\rho^h\bu^h)}\left[\dfrac{\partial (\rho^h \bu^h)}{\partial t}\right]
= - (\nabla \bu^h, \boldsymbol{\tau}(\bu^h,\phi^h))_\Omega-  \displaystyle\sum_K \left(\nabla \bu^h, \theta_K \nabla \bu^h \right)_{\Omega_K}.
\end{align}
\end{proof}

\section{Energy-dissipative temporal discretization}\label{sec: Time-discrete formulation}
In this Section we present the energy-stable time-integration methodology. We present a modified version of the mid-point time-discretization method. First we introduce some required notation in \cref{subsec: time not} and then explain the time-discretization of the terms that differ from the standard midpoint rule in \cref{subsec: id temp1,subsec: id temp2}. The eventual method is presented in \cref{subsec: fully disc}.

The simplest fully-discrete algorithm would be to start from the semi-discrete version of (\ref{eq: weak form semi-discrete0}) and then discretize in time using the second-order mid-point time-discretization.
An important observation is that this approach does not lead to a provable energy-dissipative formulation, see \ref{appendix: Standard mid-point Galerkin discretizations}. We note that this is in contrast to the single-fluid case (in absence of surface tension effects).

In the following we present our strategy to arrive at a provable energy-dissipative formulation.
Our approach is to mirror the semi-discrete case as closely as possible. We first focus on the terms that are directly associated with temporal derivatives of the energies and then treat the remaining terms.

\subsection{Notation}\label{subsec: time not}
Let us divide the time-interval $\mathcal{T}$ into sub-intervals $\mathcal{T}_n=(t_n,t_{n+1})$ (with $n=0,1,...,N$) and denote the size of interval $\mathcal{T}_n$ as time-step $\Delta t_n=t_{n+1}-t_n$. We use subscripts to indicate the time-level of the unknown quantities, i.e. the unknowns at time-level $n$ are $\bu^h_n, p^h_n, \phi^h_n$ and $v^h_n$. 
Lastly, we denote the intermediate time-levels and associated time derivatives as:
\begin{subequations}\label{eq:  intermediate time-levels}
\label{eq:weak}
\begin{align}
  \bu^h_{n+1/2} =&~ \tfrac{1}{2}(\bu^h_n+\bu^h_{n+1}), & \frac{1}{\Delta t_n}[\![\bu^h]\!]_{n+1/2} =&~ \frac{1}{\Delta t_n}(\bu^h_{n+1}-\bu^h_{n}),\\
  \phi^h_{n+1/2} =&~ \tfrac{1}{2}(\phi^h_n+\phi^h_{n+1}), \\
 \rho^h_{n+1/2} =&~ \rho(\phi^h_{n+1/2}), & \frac{1}{\Delta t_n}[\![\rho^h]\!]_{n+1/2} =&~ \frac{1}{\Delta t_n}(\rho^h_{n+1}-\rho^h_{n}),\\
 &&\frac{1}{\Delta t_n}[\![\rho^h\bu^h]\!]_{n+1/2} =&~ \frac{1}{\Delta t_n}\left(\rho^h_{n+1}  \bu^h_{n+1}-\rho^h_{n}  \bu^h_{n}\right),\\
 \mu^h_{n+1/2}  =&~ \mu(\phi^h_{n+1/2}),&
\end{align}
\end{subequations}
where $\rho^h_{n} = \rho(\phi^h_{n})$.

\subsection{Identification energy evolution terms}\label{subsec: id temp1}
In order to identify the energy evolution terms we wish to have the fully discrete version of
\begin{subequations}\label{eq: identify}
\begin{align}
  \dt \mathscr{E}^{K,h} =&~ (\bw^h, \partial_t (\rho^h \bu^h))_\Omega + (\zeta^h, \varrho(\phi^h)\tfrac{1}{2}\|\bu^h\|_2^2)_\Omega, \quad &\text{with }\bw^h = \bu^h, \text{ and }\zeta^h = -\partial_t \phi^h,\\
  \dt \mathscr{E}^{G,h} =&~ -\frac{1}{\mathbb{F}{\rm r}^2}(\zeta^h, \varrho(\phi^h) y)_\Omega, \quad &\text{with }\zeta^h = -\partial_t \phi^h,\\
  \dt \mathscr{E}^{S,h} =&~ -\left(\frac{1}{\mathbb{W}{\rm e}} \delta(\phi^h) \dfrac{\nabla \phi^h}{\|\nabla \phi^h\|_{\epsilon,2}}, \nabla \zeta^h\right)_\Omega \nn\\
  &~- \left(\frac{1}{\mathbb{W}{\rm e}} \|\nabla \phi^h \|_{\epsilon,2} \delta'(\phi^h), \zeta^h\right)_\Omega , \quad &\text{with }\zeta^h = -\partial_t \phi^h,\\
\end{align}
\end{subequations}

Three issues arise: (i) the approximation of the internal energy density $\tfrac{1}{2}\|\bu^h\|^2_2$ in the additional equation (\ref{weak form v cont semi}), (ii) the approximation of the interface density jump term $\varrho^h$ and (iii) the approximation of the surface tension contribution.\\
In the following we discuss the considerations for their time-discretization.

(i) The first matter is resolved when taking a shift in the time-levels in the energy density, analogously as in Liu et al. \cite{liu2013functional}, i.e. we take $\tfrac{1}{2}\bu^h_n\cdot \bu^h_{n+1}$ in the additional equation.\\

(ii) Concerning the second problem, we require a stable time-discretization of $\varrho^h$ such that the approximation of $\varrho^h \partial_t \phi^h$ equals that of $\partial_t \rho^h$. This suggests to approximate $\varrho^h$ at the intermediate time level $t_{n+1/2}$ as
\begin{align}\label{eq: approx varsigma}
 \varrho^h(t_{n+1/2})  \approx \varrho_{F,n+1/2}^h :=\dfrac{\rho(\phi_{n+1}^h)-\rho(\phi_{n}^h)}{\phi_{n+1}^h-\phi_{n}^h},
\end{align}
such that
\begin{align}\label{eq: relation varsigma}
  \dfrac{[\![\rho^h]\!]_{n+1}}{\Delta t_n} = \varrho_{F,n+1/2}^h \dfrac{[\![\phi^h]\!]_{n+1/2}}{\Delta t_n}.
\end{align}
Unfortunately, the approximation (\ref{eq: approx varsigma}) is not defined when $\phi_{n+1}^h = \phi_{n}^h$.
If $\varrho$ is a polynomial function of $\phi$ we may use truncated Taylor expansions around $\phi^h_{n+1/2}$ to find:
\begin{align}\label{eq: truncated series}
 \varrho_{F,n+1/2}^h = \sum_{j=0}^M \dfrac{1}{2^{2j}(2j+1)!}\varrho^{(2j)}(\phi^h_{n+1/2}) [\![\phi^h]\!]_{n+1/2}^{2j},
\end{align}
where $M$ chosen such that latter terms in the sum vanish and where we use the notation $h^{(m)}(x) = {\rm d}^mh/{\rm d} x^m$ for the $m$-th derivative of function $h=h(x)$.
This motivates to use a (piece-wise) higher-order polynomial for $\varrho^h$.
We define the regularized Heaviside as
\begin{align}\label{def reg heavi}
 H_\eps(\phi^h_n) = H^p(\phi^h_n/\eps)
\end{align}
where $H^p=H^p(\upphi)$ is the piece-wise polynomial regularization:
\begin{align}
 H^p=H^p(\upphi) = \left\{ \begin{matrix} 0 & ~~~~~~~~~~~~~\upphi < -1, \\[6pt]
                                                 -\frac{3}{4} \upphi^5 - \frac{5}{2}\upphi^4 - \frac{5}{2} \upphi^3 +\frac{5}{4}\upphi +\frac{1}{2}   &~~ -1 \leq \upphi < 0, \\[6pt]
                                                 -\frac{3}{4} \upphi^5 + \frac{5}{2}\upphi^4 - \frac{5}{2} \upphi^3 +\frac{5}{4}\upphi +\frac{1}{2}  & ~~~~~0 \leq \upphi < 1,\\[6pt]
                                                    1 & 1 \leq \upphi.
                   \end{matrix} 
\right.
\end{align}
This function is $\mathcal{C}^3$-continuous at $\upphi = 0$ and $\mathcal{C}^3$-continuous at $\upphi = -1, \upphi = 1$. Furthermore, we base the regularization of Dirac on the Heaviside, i.e. we have $\delta_\eps(\phi^h) = H_\eps^{(1)}(\phi^h)$.
\begin{rmk}
  The regularized Dirac delta $\delta_\eps(\phi^h)$ has area $1$.
\end{rmk}
\begin{rmk}
  If $\varrho^h$ is non-polynomial one may use perturbed trapezoidal rules. In case of positive higher-order derivatives this leads to a stable approximation for $\varrho^h$.
\end{rmk}
\begin{rmk}
 This regularization closely resembles the popular goniometric regularization:
\begin{align}
 H^g=H^g(\upphi) = \left\{ \begin{matrix} 0 & ~~~~~~~~~~~~~\upphi < -1, \\[6pt]
                                                 \frac{1}{2} \left( 1 + \upphi + \frac{1}{\pi} \sin(\pi \upphi) \right)    &~~ -1 \leq \upphi < 1, \\[6pt]
                                                    1 & 1 \leq \upphi.
                   \end{matrix} 
\right.
\end{align}
\cref{fig:Heavisides} shows the polynomial regularization $H^p=H^p(\upphi)$, the goniometric regularization $H^g=H^g(\upphi)$ and their first two derivatives. At $\upphi = -1$ and $\upphi = 1$ the goniometric regularization is $\mathcal{C}^2$-continuous where $H^p=H^p(\upphi)$ is $\mathcal{C}^3$-continuous.

\begin{figure}[h!]
\begin{subfigure}{0.33\textwidth}
\centering
\includegraphics[width=0.95\textwidth]{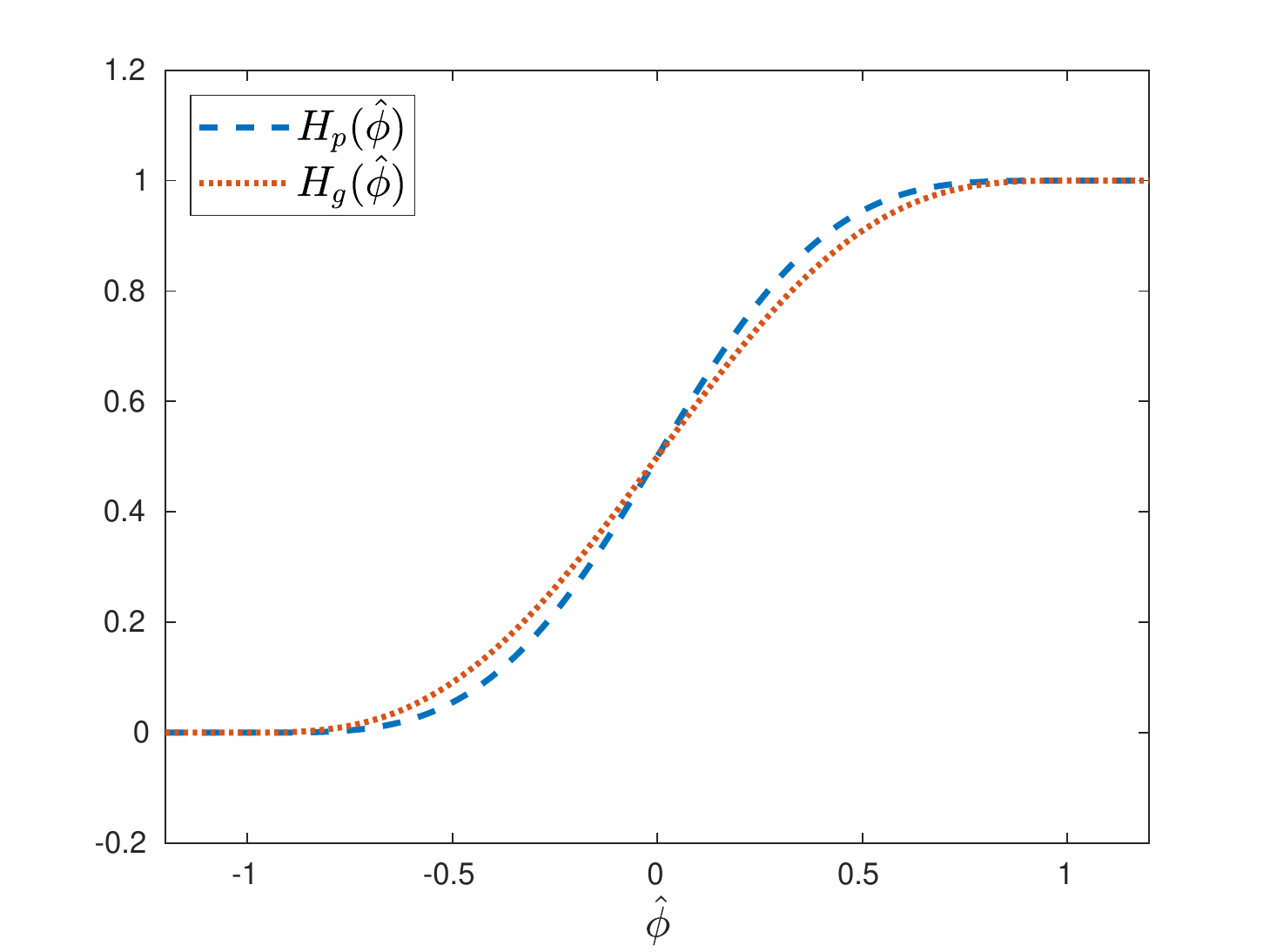}
\caption{Regularized heaviside}
\end{subfigure}
\begin{subfigure}{0.33\textwidth}
\centering
\includegraphics[width=0.95\textwidth]{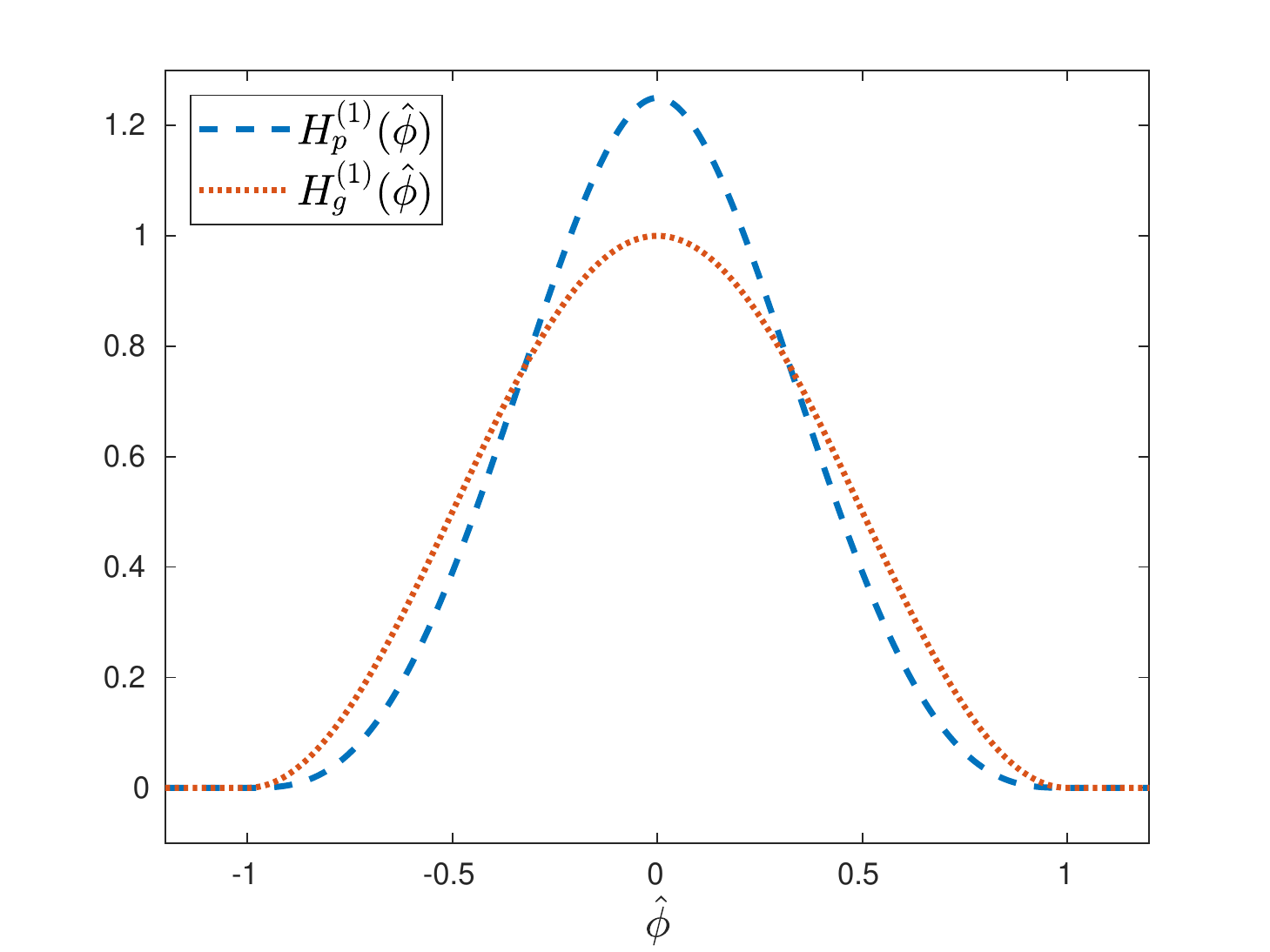}
\caption{Regularized Dirac delta}
\end{subfigure}
\begin{subfigure}{0.33\textwidth}
\centering
\includegraphics[width=0.95\textwidth]{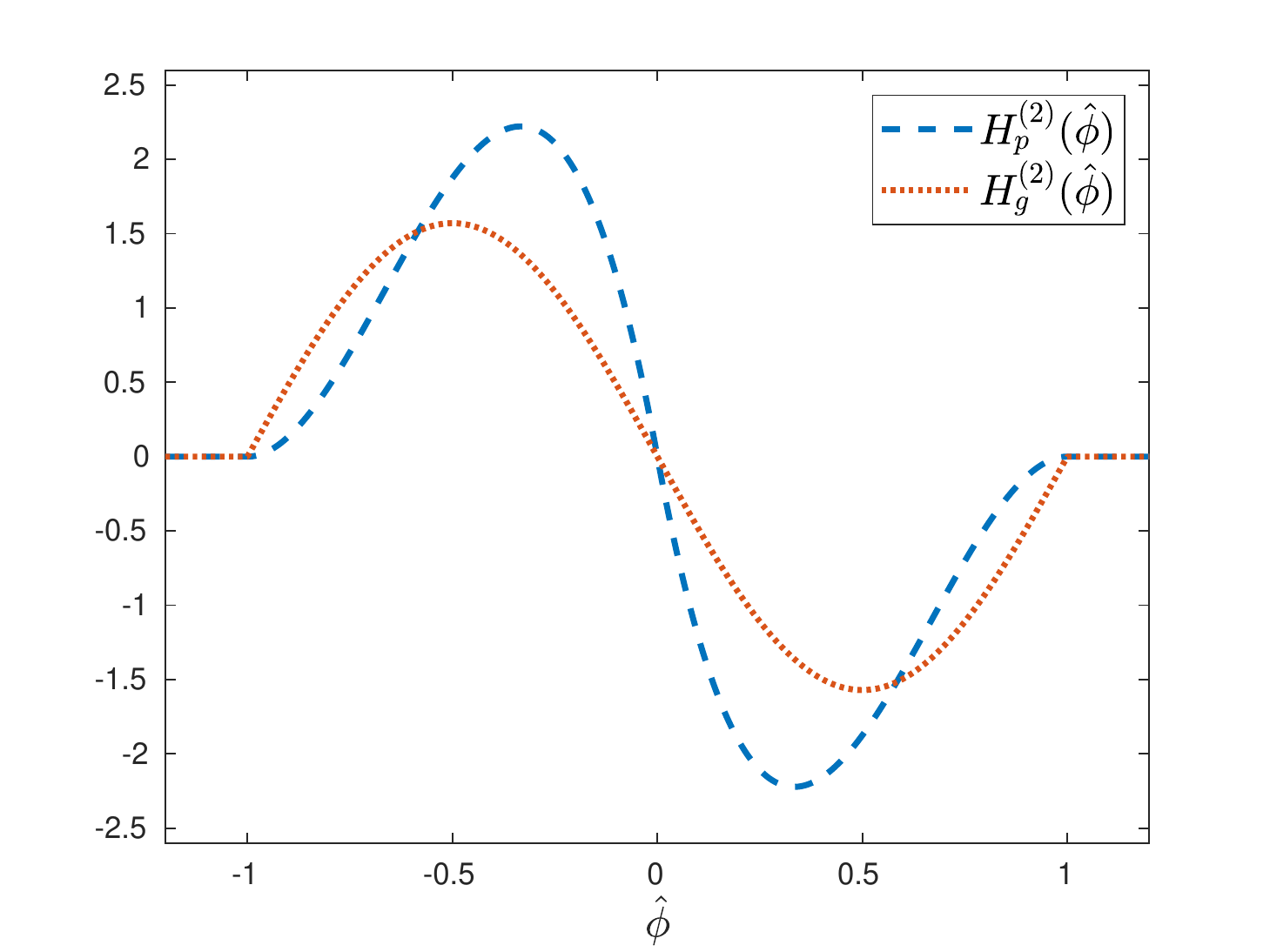}
\caption{Derivative regularized heaviside}
\end{subfigure}
\caption{Comparison of polynomial and goniometric regularization of the Heaviside.}
\label{fig:Heavisides}
\end{figure}
\end{rmk}

Since $\varrho^h(t_{n+1/2})$ is a piece-wise polynomial (\ref{eq: truncated series}) only holds if $\phi^h_n$ and $\phi^h_{n+1}$ are in the same `piece'. In the other case we have $\phi^h_n \neq \phi^h_{n+1}$ and thus we may use $\varrho_{F,n+1/2}^h$.  
Thus, to define $\varrho^h(t_{n+1/2})$ in the auxiliary equation we distinguish the cases
\begin{enumerate}
 \item [1.] $\phi^h_{n}$ and $\phi^h_{n+1}$ are in the same `piece' of the polynomial $H_\eps$
 \item [2.] $\phi^h_{n}$ and $\phi^h_{n+1}$ are in another piece of the polynomial $H_\eps$.
\end{enumerate}
In the first case employ the truncated series (\ref{eq: truncated series}) whereas in the second case we directly employ the left-hand side of (\ref{eq: truncated series}):
\begin{align}\label{eq: def varsigma}
  \varrho^h(t_{n+1/2})\approx \varrho^h_{a,n+1/2} := \left\{\begin{matrix}
                          \varrho_{T, n+1/2}^h \quad & \text{in case 1}\\[6pt]
                          \varrho_{F, n+1/2}^h \quad & ~\text{in case 2,}
                          \end{matrix}
                        \right.
\end{align}
with Taylor series representation:
\begin{align}
 \varrho_{T, n+1/2}^h := [\![\![\rho]\!]\!]\left( H_\eps^{(1)}(\phi^h_{n+1/2}) + \frac{1}{24}H_\eps^{(3)}(\phi^h_{n+1/2}) [\![\phi^h]\!]_{n+1/2}^{2} + \frac{1}{1920}H_\eps^{(5)}(\phi^h_{n+1/2}) [\![\phi^h]\!]_{n+1/2}^{4}\right).
\end{align}
Definition (\ref{eq: def varsigma}) satisfies condition (\ref{eq: relation varsigma}):
\begin{align}\label{eq: condition varsigma}
 \dfrac{[\![\rho^h]\!]_{n+1}}{\Delta t_n} = \varrho_{a,n+1/2}^h \dfrac{[\![\phi^h]\!]_{n+1/2}}{\Delta t_n}.
\end{align}

(iii) We now turn our focus to the surface tension contribution, which writes in semi-discrete form:
\begin{align}\label{eq: chain rule surface term}
-\left(\frac{1}{\mathbb{W}{\rm e}} \delta(\phi^h) \dfrac{\nabla \phi^h}{\|\nabla \phi^h\|_{\epsilon,2}}, \nabla \zeta^h\right)_\Omega - \left(\frac{1}{\mathbb{W}{\rm e}} \|\nabla \phi^h \|_{\epsilon,2} \delta'(\phi^h), \zeta^h\right)_\Omega.
\end{align}
Recall that in the semi-discrete form the surface energy evolution follows when substituting $\zeta^h = - \partial_t \phi^h$:
\begin{align}\label{eq: surface energy semi-discrete}
  \left(\frac{1}{\mathbb{W}{\rm e}} \|\nabla \phi^h \|_{\epsilon,2} \delta'(\phi^h), \partial_t \phi^h\right)_\Omega + \left(\frac{1}{\mathbb{W}{\rm e}} \delta(\phi^h) \dfrac{\nabla \phi^h}{\|\nabla \phi^h\|_{\epsilon,2}}, \nabla \partial_t \phi^h\right)_\Omega =&~ \nn\\
 \frac{1}{\mathbb{W}{\rm e}}  \left( \partial_t \delta(\phi^h), \|\nabla\phi^h\|_{\epsilon,2}\right)_\Omega + \frac{1}{\mathbb{W}{\rm e}} \left(\delta(\phi^h),  \partial_t \|\nabla \phi^h\|_{\epsilon,2} \right)_\Omega =&~  \nn\\
 \dt \left(\delta(\phi^h), \frac{1}{\mathbb{W}{\rm e}} \|\nabla \phi^h\|_{\epsilon,2} \right)_\Omega =&~
 \dt \mathscr{E}_S^h.
\end{align}
Here we have utilized following identities:\\
\begin{subequations}
\textbullet~ for the first term:
\begin{align}
 &{\rm (I)} \quad \partial_t \phi^h \delta'(\phi^h) = \partial_t \delta(\phi^h), \quad \quad \quad \quad &&&&&&&&&& \label{eq: identities: 1} 
\end{align}
\textbullet~ for the second term:
\begin{align}
  &{\rm (II)} \quad \nabla \partial_t \phi^h \cdot \dfrac{\nabla\phi^h}{\|\nabla \phi^h\|_{\epsilon,2}} = \partial_t \|\nabla \phi^h\|_{\epsilon,2},\quad \quad \quad \quad &&&&&&&&&& \label{eq: identities: 3}
\end{align}
\textbullet~ and for combining the terms:
\begin{align}
  &{\rm (III)} \quad \|\nabla \phi^h\|_{\epsilon,2} \partial_t \delta(\phi^h) + \delta(\phi) \partial_t \|\nabla \phi^h\|_{\epsilon,2} = \partial_t \left(\delta(\phi^h) \|\nabla \phi^h\|_{\epsilon,2}\right).\quad  &&&&&& \label{eq: identities: 4}
\end{align}
\end{subequations}
We wish to follow the same steps in the fully-discrete sense.
However, these identities are not directly guaranteed in a fully discrete sense.
In the following we describe the fully-discrete approximation of each of the three terms in (\ref{eq: chain rule surface term}), i.e. $\delta'(\phi^h), \delta(\phi^h)$ and $\nabla\phi^h/\|\nabla \phi^h\|_{\epsilon,2}$, that complies with these identities.
To that purpose we introduce the mid-point approximation of the time-derivative.
\begin{prop}\label{prop: product-rule midpoint}
  The mid-point approximation of the time-derivative satisfies the product-rule in the following sense:
  \begin{align}
    \dfrac{[\![\mathbf{a}^h\cdot\mathbf{b}^h]\!]_{n+1/2}}{\Delta t_n} = \mathbf{a}^h_{n+1/2} \cdot \dfrac{[\![\mathbf{b}^h]\!]_{n+1/2}}{\Delta t_n} + \dfrac{[\![\mathbf{a}^h]\!]_{n+1/2}}{\Delta t_n} \cdot \mathbf{b}^h_{n+1/2},
  \end{align}
  where $\mathbf{a}^h$ and $\mathbf{b}^h$ are scalar or vector fields.
\end{prop}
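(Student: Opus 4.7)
The plan is to verify this identity by direct algebraic substitution, exploiting the defining relations of the midpoint and jump operators. Specifically, from the notation introduced in \cref{subsec: time not}, we have $\mathbf{a}^h_{n+1/2} = \tfrac{1}{2}(\mathbf{a}^h_n + \mathbf{a}^h_{n+1})$ and $[\![\mathbf{a}^h]\!]_{n+1/2} = \mathbf{a}^h_{n+1} - \mathbf{a}^h_n$, and analogously for $\mathbf{b}^h$. Since $\Delta t_n$ appears linearly in every denominator, it suffices to prove the equivalent statement without the factor $1/\Delta t_n$, namely
\begin{align*}
  [\![\mathbf{a}^h \cdot \mathbf{b}^h]\!]_{n+1/2} = \mathbf{a}^h_{n+1/2}\cdot [\![\mathbf{b}^h]\!]_{n+1/2} + [\![\mathbf{a}^h]\!]_{n+1/2}\cdot \mathbf{b}^h_{n+1/2}.
\end{align*}

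Next I would substitute the definitions of the averages and jumps into the right-hand side and expand the two inner products. The cross terms $\mathbf{a}^h_n\cdot \mathbf{b}^h_{n+1}$ and $\mathbf{a}^h_{n+1}\cdot \mathbf{b}^h_n$ generated by the first summand cancel exactly against their counterparts generated by the second summand (this is where the factor $\tfrac12$ is consumed), leaving only the ``diagonal'' contributions $\mathbf{a}^h_{n+1}\cdot \mathbf{b}^h_{n+1} - \mathbf{a}^h_n\cdot \mathbf{b}^h_n$, which by definition equal $[\![\mathbf{a}^h \cdot \mathbf{b}^h]\!]_{n+1/2}$. Dividing through by $\Delta t_n$ then delivers the claim.

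The argument is purely algebraic and is independent of whether $\mathbf{a}^h$ and $\mathbf{b}^h$ are scalar or vector-valued: in both cases the pairing $\mathbf{a}^h \cdot \mathbf{b}^h$ is bilinear and symmetric, which is precisely what drives the cancellation of the cross terms. There is no real obstacle; the only point that warrants explicit checking is that the identity also applies when $\mathbf{a}^h$ and $\mathbf{b}^h$ refer to the same field (e.g.\ obtaining $[\![\|\mathbf{a}^h\|_2^2]\!]_{n+1/2} = 2\,\mathbf{a}^h_{n+1/2}\cdot [\![\mathbf{a}^h]\!]_{n+1/2}$ as a special case), which follows immediately by setting $\mathbf{b}^h = \mathbf{a}^h$.
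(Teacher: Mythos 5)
Your verification is correct: expanding $\mathbf{a}^h_{n+1/2}\cdot[\![\mathbf{b}^h]\!]_{n+1/2} + [\![\mathbf{a}^h]\!]_{n+1/2}\cdot\mathbf{b}^h_{n+1/2}$ with $\mathbf{a}^h_{n+1/2}=\tfrac{1}{2}(\mathbf{a}^h_n+\mathbf{a}^h_{n+1})$ and $[\![\mathbf{a}^h]\!]_{n+1/2}=\mathbf{a}^h_{n+1}-\mathbf{a}^h_n$ cancels the cross terms by bilinearity and symmetry and leaves exactly $\mathbf{a}^h_{n+1}\cdot\mathbf{b}^h_{n+1}-\mathbf{a}^h_n\cdot\mathbf{b}^h_n$. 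The paper states this proposition without proof, treating it as an elementary identity, and the direct algebraic expansion you give is precisely the argument it implicitly relies on.
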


(III) We start off with the last identity (\ref{eq: identities: 4}).
The fully-discrete version of the product rule in (\ref{eq: identities: 4}) follows from \cref{prop: product-rule midpoint}:
  \begin{align}\label{eq: mid point surface energy}
    \dfrac{[\![\delta(\phi^h)\|\nabla \phi^h \|_{\epsilon,2}]\!]_{n+1/2}}{\Delta t_n} =&~ \dfrac{[\![\delta(\phi^h)]\!]_{n+1/2}}{\Delta t_n} \left(\|\nabla \phi^h \|_{\epsilon,2}\right)_{n+1/2} +  \left(\delta(\phi^h)\right)_{n+1/2}  \dfrac{[\![\|\nabla \phi^h \|_{\epsilon,2}]\!]_{n+1/2}}{\Delta t_n}.
  \end{align}
This implies that we require the approximation:
\begin{subequations}\label{eq: identify2}
\begin{align}
 \delta(\phi^h)(t_{n+1/2}) \approx&~ (\delta(\phi^h))_{n+1/2}, \nn\\
  \|\nabla \phi^h\|_{\epsilon,2}(t_{n+1/2}) \approx&~  \left(\|\nabla \phi^h\|_{\epsilon,2}\right)_{n+1/2}.
 \end{align}
 \end{subequations}
We now aim to identify the first and the second term on the right-hand side of (\ref{eq: mid point surface energy}) with first and second term on the right-hand side of (\ref{eq: surface energy semi-discrete}) respectively.

(I) To identify the first term we require, in a similar fashion as for $\varrho$, the approximation $\varsigma_{n+1/2}^h\approx \delta'(\phi^h)(t_{n+1/2})$ to satisfy:
\begin{align}\label{eq: approx 1}
 \dfrac{[\![\delta(\phi^h)]\!]_{n+1/2}}{\Delta t_n} = \varsigma_{n+1/2}^h \dfrac{[\![\phi^h]\!]_{n+1/2}}{\Delta t_n}. 
\end{align}
To this purpose we define
\begin{align}\label{eq: def deltap}
  \varsigma_{n+1/2}^h := \left\{\begin{matrix}
                           \varsigma_{T,n+1/2}^h \quad & \text{in case 1}\\[6pt]
                          \varsigma_{F,n+1/2}^h \quad & ~\text{in case 2},
                          \end{matrix} 
                        \right.
\end{align}
with truncated series:
\begin{align}\label{eq: def deltap}
  \varsigma_{T,n+1/2}^h := \delta^{(1)}_\eps(\phi^h_{n+1/2}) + \dfrac{[\![\phi]\!]_{n+1/2}^2}{24}\delta_\eps^{(3)}(\phi^h_{n+1/2}),
\end{align}
and the fraction:
\begin{align}
   \varsigma_{F,n+1/2}^h := \dfrac{\delta_\eps(\phi_{n+1}^h)-\delta_\eps(\phi_{n}^h)}{\phi_{n+1}^h-\phi_{n}^h}.
\end{align}

(II) We take in (\ref{eq: identities: 3}) the approximation:
\begin{align}
  \left(\dfrac{\nabla \phi^h}{\|\nabla \phi^h\|_{\epsilon,2}}\right)(t_{n+1/2})  \approx \dfrac{\left(\nabla \phi^h\right)_{n+1/2}}{\left(\|\nabla \phi^h\|_{\epsilon,2}\right)_{n+1/2}}=\dfrac{\nabla \phi^h_{n+1} + \nabla \phi^h_n}{\|\nabla \phi^h_{n+1}\|_{\epsilon,2} + \|\nabla \phi^h_{n}\|_{\epsilon,2}},
\end{align}
such that (II) is satisfied in a fully-discrete sense:
\begin{align}\label{eq: approx 3}
  \nabla \dfrac{[\![\phi^h]\!]_{n+1/2}}{\Delta t_n}\cdot\dfrac{\left(\nabla \phi^h\right)_{n+1/2}}{\left(\|\nabla \phi^h\|_{\epsilon,2}\right)_{n+1/2}} = \dfrac{\|\nabla \phi^h_{n+1}\|_{\epsilon,2} - \|\nabla \phi^h_{n}\|_{\epsilon,2}}{\Delta t}.
\end{align}

\subsection{Discretization other terms}\label{subsec: id temp2}
We discretize the continuity equation using the mid-point rule, i.e.
\begin{align}
 \left( q^h, \nabla\cdot \bu^h_{n+1/2} \right)_\Omega = 0,
\end{align}
which implies pointwise divergence-free solutions on a fully-discrete level.

Next, we require the fully-discrete version of the identities:
\begin{subequations}\label{eq: identify3}
\begin{align}
  -(\nabla \bu^h, \rho^h \bu^h \otimes \bu^h)_\Omega - (\bu^h, \tfrac{1}{2}\|\bu^h\|_2^2\varrho(\phi^h)\nabla \phi^h)_\Omega = 0,\\
  +\frac{1}{\mathbb{F}{\rm r}^2}(\bu^h, \rho^h \boldsymbol{\jmath})_\Omega +\frac{1}{\mathbb{F}{\rm r}^2}  (\bu^h, y\varrho(\phi^h)\nabla \phi^h)_\Omega = 0,
\end{align}
\end{subequations}
which make use of the pointwise divergence-free property. These identities are fulfilled when we have
\begin{align}
 \nabla \rho(\phi^h) = \varrho(\phi^h) \nabla \phi^h.
\end{align}
Applying the chain-rule implies that we can take as approximation in the momentum equation:
\begin{align}\label{eq: def varsigma2}
  \varrho^h(t_{n+1/2}) \approx \varrho_{m,n+1/2}^h:= [\![\![\rho]\!]\!] H_\eps'(\phi^h_{n+1/2}),
\end{align}
where the subscript $m$ refers to the momentum equation.
\begin{rmk}
Note that we employ two different approximations for $\varrho^h(t_{n+1/2})$, namely (\ref{eq: def varsigma}) in the additional equation (\ref{weak form v cont semi}) and (\ref{eq: def varsigma2}) in the momentum equation (\ref{weak form LS mom cont 2 semi}).
\end{rmk}
The remaining terms utilize the standard midpoint discretization.
\subsection{Fully-discrete energy-dissipative method}\label{subsec: fully disc}
We are now ready to present the fully-discrete energy-dissipative method:\\

\textit{Given $\bu_n^h, p_n^h, \phi^h_n$ and $v_n^h$, find $\bu_{n+1}^h, p_{n+1}^h, \phi_{n+1}^h$ and $v_{n+1}^h$ such that for all $(\bw^h, q^h, \psi^h, \zeta^h) \in \WW^h$:}
\begin{subequations}\label{eq: step 2}
\begin{align}
(\bw^h,  \dfrac{[\![\rho\bu]\!]_{n+1/2}}{\Delta t_n} )_\Omega -(\nabla \bw^h, \rho^h_{n+1/2} \bu^h_{n+1/2} \otimes \bu^h_{n+1/2})_\Omega&\nn\\
- (\nabla \cdot \bw^h, p^h_{n+1})_\Omega+ (\nabla \bw^h, \boldsymbol{\tau}(\bu^h_{n+1/2},\phi^h_{n+1/2}))_\Omega +\frac{1}{\mathbb{F}{\rm r}^2}(\bw^h, \rho^h_{n+1/2} \boldsymbol{\jmath})_\Omega& \nn\\
-\left(\bw^h, v^h_{n+1} \nabla \phi^h_{n+1/2}\right)_\Omega - \left(\bw^h, \varrho^h_{m,n+1/2}\left(\frac{\|\bu^h_{n+1/2}\|_2^2}{2} - \frac{1}{\mathbb{F}{\rm r}^2} y\right)\nabla \phi^h_{n+1/2} \right)_\Omega & \nn\\
 -  \displaystyle\sum_K \left(\tau_K \bw^h \cdot \nabla v_{n+1}^h, \mathscr{R}_I\phi^h_{n+1/2}\right)_{\Omega_K}&=~ 0, \label{weak form LS mom time}\\
 \left( q^h, \nabla \cdot \bu^h_{n+1/2} \right)_\Omega &=~ 0, \label{weak form LS continuity time}\\ 
 ( \psi^h, \dfrac{[\![\phi^h]\!]_{n+1/2}}{\Delta t_n} + \bu^h_{n+1/2} \cdot \nabla \phi^h_{n+1/2} )_\Omega +  \displaystyle\sum_K \left(\tau_K \bu_{n+1/2}^h \cdot \nabla \psi^h, \mathscr{R}_I\phi^h_{n+1/2} \right)_{\Omega_K} &=~ 0, \label{weak form LS phi cont time}\\
  \left(\zeta^h, v^h_{n+1}+\varrho^h_{a,n+1/2}\left(\tfrac{1}{2}\bu^h_{n+1}\cdot \bu^h_n-\frac{1}{\mathbb{F}{\rm r}^2} y\right)\right)_\Omega & \nn\\
  - \frac{1}{\mathbb{W}{\rm e}} \left(\zeta^h \varsigma^h_{n+1/2}, \left(\|\nabla \phi^h\|_{\epsilon,2}\right)_{n+1/2}\right)_\Omega- \frac{1}{\mathbb{W}{\rm e}}\left(\delta(\phi^h)_{n+1/2}\nabla \zeta^h,    \dfrac{\left(\nabla \phi^h\right)_{n+1/2}}{\left(\|\nabla \phi^h\|_{\epsilon,2}\right)_{n+1/2}}\right)_\Omega &=~ 0 \label{weak form v cont time}.
\end{align}
\end{subequations}

\begin{rmk}
  Due to \cref{prop: product-rule midpoint} the time-derivative in the momentum equation may be implemented as:
  \begin{align}\label{eq: mid point for momentum}
    \dfrac{[\![\rho^h\bu^h]\!]_{n+1/2}}{\Delta t_n} = \rho^h_{n+1/2}  \dfrac{[\![\bu^h]\!]_{n+1/2}}{\Delta t_n} + \dfrac{[\![\rho^h]\!]_{n+1/2}}{\Delta t_n}  \bu^h_{n+1/2}.
  \end{align}
\end{rmk}
\begin{thm}\label{thm: fully-discrete}
 The algorithm (\ref{eq: step 2}) has the properties:
 \begin{enumerate}
  \item The scheme satisfies the maximum principle for the density, i.e. without loss of generality we assume that $\rho_2 \leq \rho_1$ and then have:
  \begin{align}
     \rho_2 \leq \rho^h_{n} \leq \rho_1, \quad \text{for all} ~~n=0,1,...,N.
  \end{align}
  \item The scheme is divergence-free as a distribution:
  \begin{align}\label{prop 2}
    \nabla \cdot \bu^h_{n+1/2} \equiv 0.
  \end{align}
  \item The scheme satisfies the dissipation inequality:
   \begin{align}
   \frac{[\![\mathscr{E}^h]\!]_{n+1/2}}{\Delta t_n} =&~- \left(\nabla \bu^h_{n+1/2},\boldsymbol{\tau}(\bu^h_{n+1/2},\phi^h_{n+1/2})\right)_\Omega -\displaystyle\sum_K \left(\nabla \bu_{n+1/2}^h, \theta_K \nabla \bu_{n+1/2}^h\right)_{\Omega_K} \nn\\
   \leq&~ 0 , \quad \text{for all} ~~n=0,1,...,N.
 \end{align}
 \end{enumerate}
\end{thm}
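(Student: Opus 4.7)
My plan is to mirror the proof of \cref{thm: energy dissipation modified model semi} as closely as possible, exploiting the care that was taken in \cref{subsec: id temp1,subsec: id temp2} to design the approximations $\varrho^h_{a,n+1/2}$, $\varrho^h_{m,n+1/2}$, $\varsigma^h_{n+1/2}$ and the discrete fraction $(\nabla\phi^h)_{n+1/2}/(\|\nabla\phi^h\|_{\epsilon,2})_{n+1/2}$ so that the chain rule and the product rule survive at the fully-discrete level.

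Properties 1 and 2 are immediate. Property 1 follows since $\rho^h_n=\rho(\phi^h_n)=\rho_1 H_\eps(\phi^h_n)+\rho_2(1-H_\eps(\phi^h_n))$ with $H_\eps\in[0,1]$ by construction of the piecewise-quintic regularization. Property 2 follows by taking $q^h=\nabla\cdot\bu^h_{n+1/2}$ (admissible in the divergence-conforming space of Evans et al.\ as used in the semi-discrete proof) in \eqref{weak form LS continuity time}.

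For property 3 I would proceed in three bundles. \emph{First}, substitute $\psi^h=v^h_{n+1}$ in \eqref{weak form LS phi cont time} and $\zeta^h=-[\![\phi^h]\!]_{n+1/2}/\Delta t_n$ in \eqref{weak form v cont time} and add them. The $v^h_{n+1}\,\bu^h_{n+1/2}\!\cdot\!\nabla\phi^h_{n+1/2}$ contributions match, while the remaining terms must be recognised as $[\![\mathscr{E}^{G,h}+\mathscr{E}^{S,h}]\!]_{n+1/2}/\Delta t_n$ together with a discrete analogue of $-(\partial_t\phi^h,\tfrac12\varrho^h\|\bu^h\|_2^2)_\Omega$. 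This is exactly where the design pays off: \eqref{eq: condition varsigma} gives $\varrho^h_{a,n+1/2}[\![\phi^h]\!]_{n+1/2}=[\![\rho^h]\!]_{n+1/2}$ (so the gravity term yields $[\![\mathscr{E}^{G,h}]\!]_{n+1/2}/\Delta t_n$), \eqref{eq: approx 1} gives the discrete chain rule $\varsigma^h_{n+1/2}[\![\phi^h]\!]_{n+1/2}=[\![\delta(\phi^h)]\!]_{n+1/2}$, identity \eqref{eq: approx 3} turns the normal-gradient term into $[\![\|\nabla\phi^h\|_{\epsilon,2}]\!]_{n+1/2}/\Delta t_n$, and the midpoint product rule \eqref{eq: mid point surface energy} then assembles these into $[\![\mathscr{E}^{S,h}]\!]_{n+1/2}/\Delta t_n$. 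The shifted internal-energy approximation $\tfrac12\bu^h_n\!\cdot\!\bu^h_{n+1}$ in \eqref{weak form v cont time} makes the leftover term combine cleanly with the kinetic contribution coming from the momentum equation.

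\emph{Second}, substitute $\bw^h=\bu^h_{n+1/2}$ in \eqref{weak form LS mom time}. Using \cref{prop: product-rule midpoint} in the form \eqref{eq: mid point for momentum} together with the discrete divergence-free property from step~2 and the identity $\nabla\rho^h_{n+1/2}=\varrho^h_{m,n+1/2}\nabla\phi^h_{n+1/2}$ (guaranteed by the choice \eqref{eq: def varsigma2}), the convective and gravitational terms combine via the fully-discrete analogues of \eqref{eq: identify3} to produce $[\![\mathscr{E}^{K,h}]\!]_{n+1/2}/\Delta t_n$ up to the pressure term (which vanishes against $\nabla\cdot\bu^h_{n+1/2}=0$) and up to the coupling term $-(\bu^h_{n+1/2},v^h_{n+1}\nabla\phi^h_{n+1/2})_\Omega$. \emph{Third}, adding the outcomes of the first and second bundles, the $v^h_{n+1}$ coupling cancels, the SUPG contribution $\sum_K(\tau_K\bu^h_{n+1/2}\!\cdot\!\nabla v^h_{n+1},\mathscr{R}_I\phi^h_{n+1/2})_{\Omega_K}$ present in both \eqref{weak form LS mom time} and \eqref{weak form LS phi cont time} cancels, and what survives on the right-hand side is the viscous dissipation $-(\nabla\bu^h_{n+1/2},\boldsymbol{\tau}(\bu^h_{n+1/2},\phi^h_{n+1/2}))_\Omega$ plus the discontinuity-capturing dissipation $-\sum_K(\nabla\bu^h_{n+1/2},\theta_K\nabla\bu^h_{n+1/2})_{\Omega_K}$, both manifestly non-positive.

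The main obstacle is the bookkeeping in the first bundle: verifying case-by-case (the two cases in \eqref{eq: def varsigma}/\eqref{eq: def deltap}) that the designed approximations really do reproduce $[\![\rho^h]\!]_{n+1/2}$ and $[\![\delta(\phi^h)]\!]_{n+1/2}$ in the required discrete chain-rule form, and that \eqref{eq: approx 3} together with \eqref{eq: mid point surface energy} produces precisely $[\![\delta(\phi^h)\|\nabla\phi^h\|_{\epsilon,2}]\!]_{n+1/2}/\Delta t_n$ and not merely an approximation of it. Once these telescoping identities are in hand, the remaining calculation is just a discrete re-run of the semi-discrete proof.
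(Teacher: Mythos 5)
Your proposal is correct and follows essentially the same route as the paper: the same weight choices ($\psi^h=v^h_{n+1}$, $\zeta^h=-[\![\phi^h]\!]_{n+1/2}/\Delta t_n$, $\bw^h=\bu^h_{n+1/2}$), the same invocation of the designed discrete chain/product rules (\ref{eq: condition varsigma}), (\ref{eq: approx 1}), (\ref{eq: approx 3}), (\ref{eq: mid point surface energy}) and (\ref{eq: mid point for momentum}), and the same cancellation of the $v^h_{n+1}$ coupling and SUPG terms leaving only the viscous and discontinuity-capturing dissipation. The telescoping identities you flag as the "main obstacle" are exactly the ones the paper establishes beforehand in \cref{subsec: id temp1,subsec: id temp2} and then cites in its proof, so nothing further is needed.
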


\begin{proof}
1 $\&$ 2. Analogously to the semi-discrete case.\\

3. 
Selection of the weights $\psi^h = v_{n+1}^h$ in (\ref{weak form LS phi cont time}) and $\zeta^h = -[\![\phi^h]\!]_{n+1/2}/\Delta t_n$ in (\ref{weak form v cont time}) yields:
\begin{subequations}\label{eq: cont proof weights 1 time 0}
\begin{align}
    (v^h_{n+1}, \frac{[\![\phi^h]\!]_{n+1/2}}{\Delta t_n} + \bu^h_{n+1/2}\cdot \nabla \phi^h_{n+1/2} )_\Omega  
    +  \displaystyle\sum_K \left(\tau_K \bu_{n+1/2}^h \cdot \nabla v_{n+1}^h, \mathscr{R}_I\phi^h_{n+1/2}\right)_{\Omega_K}  &~= 0,\\    
  -\left( \frac{[\![\phi^h]\!]_{n+1/2}}{\Delta t_n}, v^h+\varrho^h_{a,n+1/2}\left(\tfrac{1}{2}\bu^h_{n+1}\cdot \bu^h_n-\frac{1}{\mathbb{F}{\rm r}^2} y\right)\right)_\Omega &\nn\\
  + \frac{1}{\mathbb{W}{\rm e}}\left(\frac{[\![\phi^h]\!]_{n+1/2}}{\Delta t_n} \varsigma_{n+1/2}^h, \left(\|\nabla \phi^h\|_{\epsilon,2}\right)_{n+1/2}\right)_\Omega & \nn\\
  + \frac{1}{\mathbb{W}{\rm e}}\left(\delta(\phi^h_{n+1/2})\nabla \frac{[\![\phi^h]\!]_{n+1/2}}{\Delta t_n},   \dfrac{ \left(\nabla \phi^h\right)_{n+1/2}}{ \left(\|\nabla \phi^h\|_{\epsilon,2}\right)_{n+1/2}}\right)_\Omega&~=0.
\end{align}
\end{subequations}
We add the equations (\ref{eq: cont proof weights 1 time 0}) and find:
\begin{align}\label{eq: cont combining time 1}
  (v^h_{n+1},  \bu^h_{n+1/2}\cdot \nabla \phi^h_{n+1/2} )_\Omega -\left( \frac{[\![\phi^h]\!]_{n+1/2}}{\Delta t_n},\tfrac{1}{2}\varrho^h_{a,n+1/2}\bu^h_{n+1}\cdot \bu^h_n\right)_\Omega   & \nn \\
  +\left( \frac{[\![\phi^h]\!]_{n+1/2}}{\Delta t_n},\varrho^h_{a,n+1/2} \frac{1}{\mathbb{F}{\rm r}^2}y \right)_\Omega +  \displaystyle\sum_K \left(\tau_K \bu_{n+1/2}^h \cdot \nabla v_{n+1}^h, \mathscr{R}_I\phi^h_{n+1/2}\right)_{\Omega_K} &\nn\\
  + \left(\frac{[\![\phi^h]\!]_{n+1/2}}{\Delta t_n} \varsigma_{n+1/2}^h, \frac{1}{\mathbb{W}{\rm e}} \left(\|\nabla \phi^h\|_{\epsilon,2}\right)_{n+1/2}\right)_\Omega & \nn\\
  + \left(\delta(\phi^h_{n+1/2})\nabla \frac{[\![\phi^h]\!]_{n+1/2}}{\Delta t_n},  \frac{1}{\mathbb{W}{\rm e}}  \dfrac{ \left(\nabla \phi^h\right)_{n+1/2}}{ \left(\|\nabla \phi^h\|_{\epsilon,2}\right)_{n+1/2}}\right)_\Omega&~=0.
\end{align}
Using (\ref{eq: condition varsigma}), (\ref{eq: mid point surface energy}), (\ref{eq: approx 1}) and (\ref{eq: approx 3}) we get
\begin{align}\label{eq: cont combining time 1}
   \left( \dfrac{[\![\rho^h]\!]_{n+1/2}}{\Delta t_n},-\tfrac{1}{2}\bu^h_{n+1}\cdot \bu^h_n+ \frac{1}{\mathbb{F}{\rm r}^2}y \right)_\Omega ~~~~& \nn \\
  + \left( \frac{1}{\mathbb{W}{\rm e}}, \frac{[\![\delta(\phi^h) \|\nabla \phi^h\|_{\epsilon,2}]\!]_{n+1/2}}{\Delta t_n} \right)_\Omega~ =& ~-(v^h_{n+1},  \bu^h_{n+1/2}\cdot \nabla \phi^h_{n+1/2} )_\Omega \nn\\
  &~-  \displaystyle\sum_K \left(\tau_K \bu_{n+1/2}^h \cdot \nabla v_{n+1}^h, \mathscr{R}_I\phi^h_{n+1/2}\right)_{\Omega_K}.
\end{align}
Next we take $\bw^h = \bu^h_{n+1/2}$ in (\ref{weak form LS mom time}) to get:
\begin{align}\label{eq: cont combining time 2 1}
 (\bu^h_{n+1/2},  \dfrac{[\![\rho^h \bu^h]\!]_{n+1/2}}{\Delta t_n})_\Omega ~~=&~~(\nabla \bu^h_{n+1/2}, \rho^h_{n+1/2} \bu^h_{n+1/2} \otimes \bu^h_{n+1/2})_\Omega \nn\\
 &~+ (\bu^h_{n+1/2}, \tfrac{1}{2}\|\bu^h_{n+1/2}\|_2^2\varrho^h_{m,n+1/2}\nabla \phi^h_{n+1/2})_\Omega\nn\\
 &~-\frac{1}{\mathbb{F}{\rm r}^2}(\bu^h_{n+1/2}, \rho^h_{n+1/2} \boldsymbol{\jmath})_\Omega - \frac{1}{\mathbb{F}{\rm r}^2}\left( \bu^h_{n+1/2}, \varrho^h_{m,n+1/2}  y \nabla \phi^h_{n+1/2} \right)_\Omega\nn\\
 &~- (\nabla \cdot \bu^h_{n+1/2}, p_{n+1}^h)_\Omega - (\nabla \bu^h_{n+1/2},  \boldsymbol{\tau}(\bu^h_{n+1/2},\phi_{n+1/2}^h))_\Omega \nn\\
  &~ +\left(\bu^h_{n+1/2}, v^h_{n+1} \nabla \phi_{n+1/2}^h\right)_\Omega \nn\\
  &~-  \displaystyle\sum_K \left(\nabla \bu_{n+1/2}^h, \theta_K \nabla \bu_{n+1/2}^h \right)_{\Omega_K} \nn\\
  &~ +  \displaystyle\sum_K \left(\tau_K \bu_{n+1/2}^h \cdot \nabla v_{n+1}^h, \mathscr{R}_I\phi^h_{n+1/2}\right)_{\Omega_K} .
\end{align}
By virtue of (\ref{eq: identify3}) and (\ref{prop 2}) we have the identities:
\begin{subequations}
\begin{align}
   (\nabla \bu^h_{n+1/2}, \rho^h_{n+1/2} \bu^h_{n+1/2} \otimes \bu^h_{n+1/2})_\Omega + (\bu^h_{n+1/2}, \tfrac{1}{2}\|\bu^h_{n+1/2}\|_2^2\varrho^h_{m,n+1/2}\nabla \phi_{n+1/2}^h)_\Omega &~=0,\\[6 pt]
   - (\nabla \cdot \bu^h_{n+1/2}, p_{n+1}^h)_\Omega &~=0,\\
   \frac{1}{\mathbb{F}{\rm r}^2}(\bu^h_{n+1/2}, \rho^h_{n+1/2} \boldsymbol{\jmath})_\Omega + \frac{1}{\mathbb{F}{\rm r}^2}\left( \bu^h_{n+1/2},  \varrho^h_{m,n+1/2}  y \nabla \phi^h_{n+1/2} \right)_\Omega  &~=0.
\end{align}
\end{subequations}
These reduce (\ref{eq: cont combining time 2 1}) to
\begin{align}\label{eq: reduced}
 (\bu^h_{n+1/2},  \dfrac{[\![\rho^h \bu^h]\!]_{n+1/2}}{\Delta t_n})_\Omega ~~=&~~- (\nabla \bu^h_{n+1/2},  \boldsymbol{\tau}(\bu^h_{n+1/2},\phi_{n+1/2}^h))_\Omega -  \displaystyle\sum_K \left(\nabla \bu_{n+1/2}^h, \theta_K \nabla \bu_{n+1/2}^h \right)_{\Omega_K}\nn\\
  &~ +\left(\bu^h_{n+1/2}, v^h_{n+1} \nabla \phi_{n+1/2}^h\right)_\Omega
   +  \displaystyle\sum_K \left(\tau_K \bu_{n+1/2}^h \cdot \nabla v_{n+1}^h, \mathscr{R}_I\phi^h_{n+1/2}\right)_{\Omega_K}.
\end{align}
Addition of (\ref{eq: cont combining time 1}) and (\ref{eq: reduced}) by using (\ref{eq: mid point for momentum}) gives:
\begin{align}\label{eq: cont combining time 3}
  \left(\bu^h_{n+1/2},  \rho^h_{n+1/2}  \dfrac{[\![\bu^h]\!]_{n+1/2}}{\Delta t_n}  \right)_\Omega &\nn\\
 +\left(\dfrac{[\![\rho^h]\!]_{n+1/2}}{\Delta t_n} ,\bu^h_{n+1/2}\cdot   \bu^h_{n+1/2}-\tfrac{1}{2}\bu^h_{n+1}\cdot \bu^h_n  \right)_\Omega \nn &\\
 +\frac{1}{\mathbb{F}{\rm r}^2}\left(\dfrac{[\![\rho^h]\!]_{n+1/2}}{\Delta t_n} ,  y \right)_\Omega\nn &\\
 + \frac{1}{\mathbb{W}{\rm e}}\left( 1, \frac{[\![\delta(\phi) \|\nabla \phi^h\|_{\epsilon,2}]\!]_{n+1/2}}{\Delta t_n} \right)_\Omega &=~- ~(\nabla \bu^h_{n+1/2},  \boldsymbol{\tau}(\bu^h_{n+1/2},\phi_{n+1/2}^h))_\Omega \nn\\
 &~~~~~-  \displaystyle\sum_K \left(\nabla \bu_{n+1/2}^h, \theta_K \nabla \bu_{n+1/2}^h \right)_{\Omega_K}.
\end{align}
Using the identity
\begin{align}
   \|\bu^h_{n+1/2}\|^2 -\tfrac{1}{2}\bu^h_{n+1}\cdot \bu^h_n= \tfrac{1}{2}(\|\bu^h\|^2)_{n+1/2}\equiv\tfrac{1}{2}\|\bu^h_{n+1}\|^2+\tfrac{1}{2}\|\bu^h_n\|^2,
\end{align}
we identify the sum of the first two terms on the left-hand side of (\ref{eq: cont combining time 3}) as the change of kinetic energy.
Next, the third term on the left-hand side of (\ref{eq: cont combining time 3}) represents change in gravitational energy.
The latter term on the left-hand side of (\ref{eq: cont combining time 3}) resembles the surface energy evolution.
We are left with:
\begin{align}\label{eq: cont combining time 4}
 \dfrac{[\![\mathscr{E}^h]\!]_{n+1/2}}{\Delta t_n}=&~- \left(\nabla \bu^h_{n+1/2}, \boldsymbol{\tau}(\bu^h_{n+1/2},\phi_{n+1/2}^h)\right)_\Omega \nn\\
 &~-  \displaystyle\sum_K \left(\nabla \bu_{n+1/2}^h, \theta_K \nabla \bu_{n+1/2}^h \right)_{\Omega_K}.
\end{align}
\end{proof}
\begin{rmk}
 Following Brackbill \cite{brackbill1992continuum} we employ the time-step restriction $\Delta t_n \leq \Delta t_{{\rm max}}$ with
 \begin{align}\label{eq: time-step restriction}
  \Delta t_{{\rm max}} =  \left( \dfrac{\bar{\rho} \left(\min_Q h_Q\right)^3\mathbb{W}{\rm e}}{2 \pi}  \right)^{1/2},
 \end{align}
where $\bar{\rho} = (\rho_1+\rho_2)/2$.
\end{rmk}

\section{Numerical experiments}\label{sec: Numerical experiments}

In this Section we evaluate the proposed numerical methodology on several numerical examples in two and
three dimensions. To test the formulation we use both a static and dynamic equilibrium problem and check the energy dissipative property of the method. We do not test the method on a `violent' problem in order to avoid the usage of redistancing procedures. All problems are evaluated with NURBS basis functions that are mostly $C^1$-quadratic but every velocity space is enriched to cubic $C^2$ in the associated direction \cite{Evans13steadyNS, Evans13unsteadyNS}.
\subsection{Static spherical droplet}
Here we test the surface tension component of the formulation by considering a spherical droplet in equilibrium \cite{francois2006balanced}.
Viscous and gravitational forces are absent and hence the surface tension forces are in balance with the pressure difference between the two fluids. The interface balance (\ref{eq: strong stress surface tension}) thus reduces to:
\begin{align}\label{eq: Young-LaPlace}
 [\![\![p]\!]\!] = -\sigma \kappa,
\end{align}
which is also referred to as the Young-Laplace equation. The exact curvature is given by:
\begin{align}
 \kappa = -\dfrac{d-1}{r},
\end{align}
where we recall $d =2,3$ as the number of spatial dimensions.
The spherical droplet of radius $r =2$ of fluid 1 with density $\rho_1 = 1.0$ is immersed in fluid 2 with density $\rho_2 = 0.1$ .  The surface tension coefficient is $\sigma = 73$. The computational domain is a cubic with a side length of 8 units and the spherical droplet is positioned in the center of it. On all surfaces a non-penetration boundary condition ($u_n =0$) is imposed. 

We employ three meshes with uniform elements: $20\times20$, $40\times40$ and $80\times80$. We take $\eps=2h_K$ for all simulations in this Section. The time-step is taken as $\Delta t_n = 10^{-3}$ which satisfies (\ref{eq: time-step restriction}) for each of the meshes. We exclude the discontinuity capturing mechanisms for this problem, i.e. we set $\mathcal{C} =0$. In \cref{fig:pressure} we display the pressure for the finest mesh.
\begin{figure}[h!]
\begin{center}
 \includegraphics[width=0.5\textwidth]{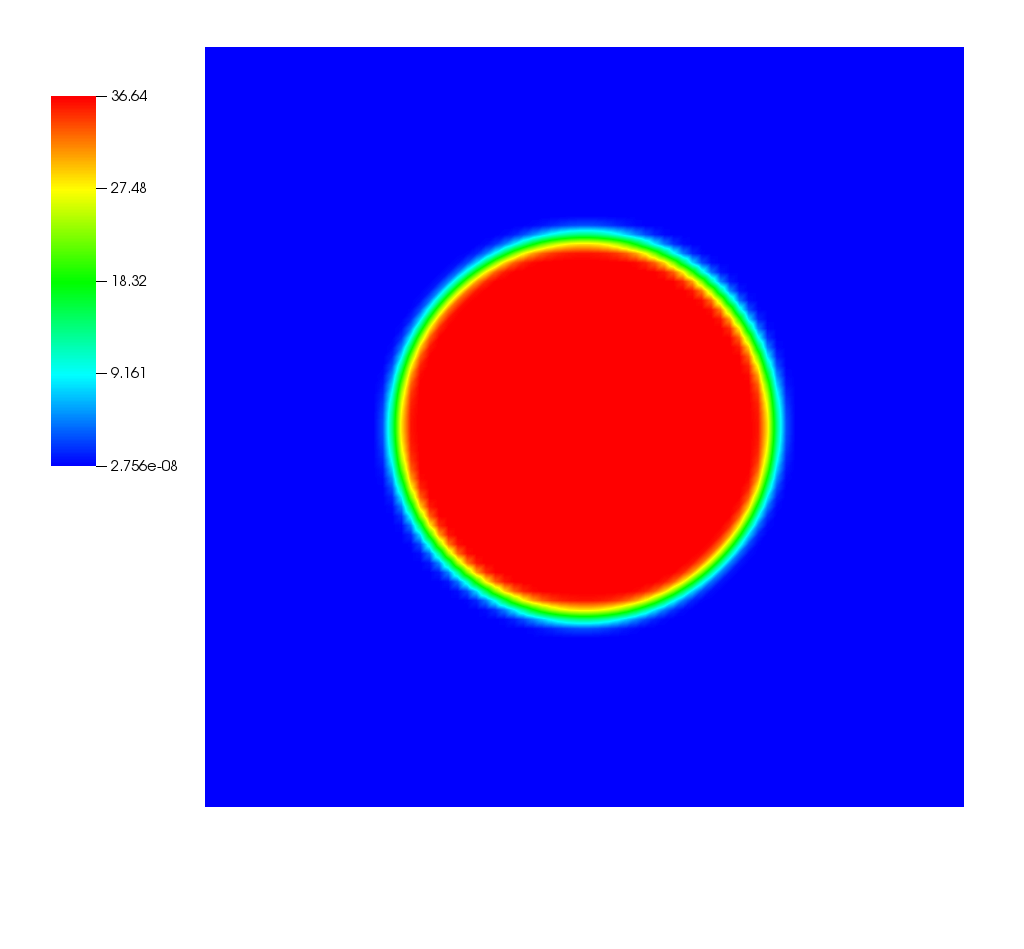}
\end{center}
\caption{Pressure}
\label{fig:pressure}
\end{figure}\\
In \cref{fig:pressure slice} we display the pressure contours for each of the meshes.  The corresponding pressure jump is $37.97, 36.80$ and $36.56$ for the meshes $20\times20$, $40\times40$ and $80\times80$ respectively. This implies second-order convergence.
\begin{figure}[h!]
\begin{center}
 \includegraphics[width=0.5\textwidth]{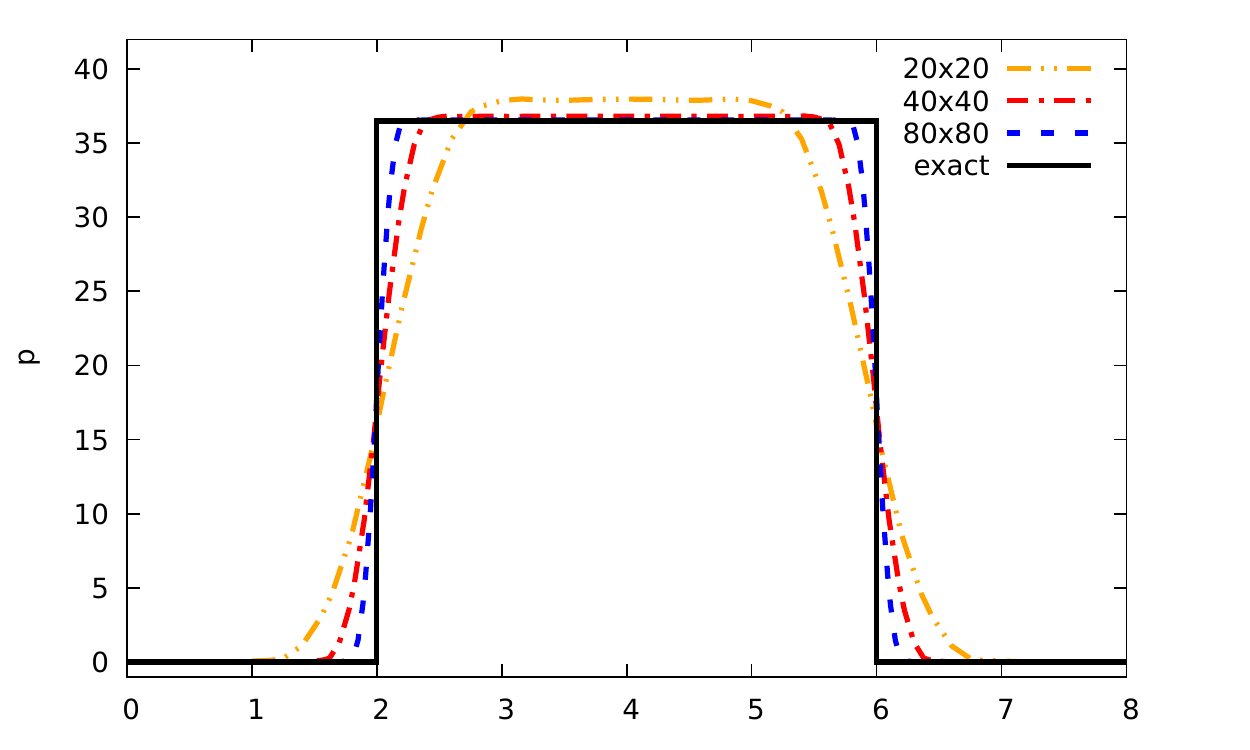}
\end{center}
\caption{Pressure slice at $y = 4.0$.}
\label{fig:pressure slice}
\end{figure}\\

\newpage
In the \cref{fig:Static droplet Energy evo,fig:Static droplet Energy diss} we depict the energy evolution and dissipation for each of the meshes. The theoretical value of the surface energy is $2 \pi r \sigma \approx 917.34$ which is well represented on the finest mesh. We see that the total and surface energies are (virtually) constant and the kinetic energy grows but has an insignificant contribution to the total energy.

\begin{figure}[h!]
\begin{subfigure}{0.49\textwidth}
\centering
\includegraphics[width=0.95\textwidth]{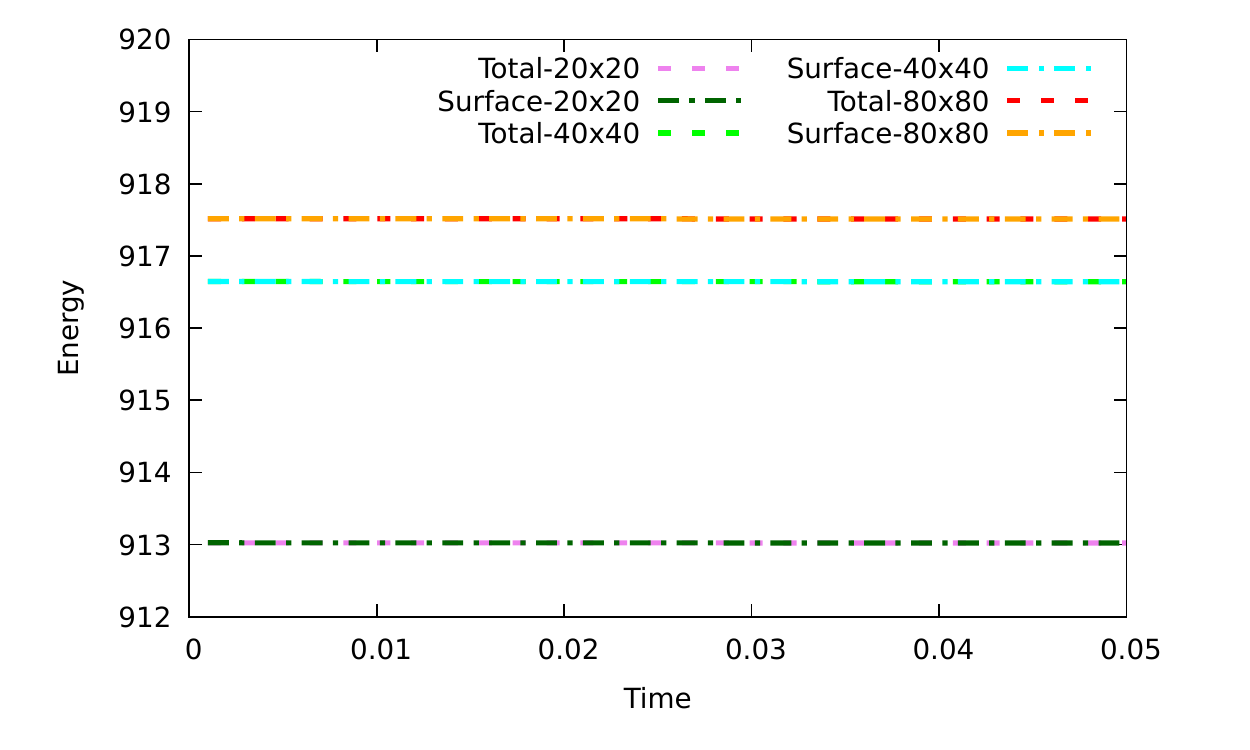}
\caption{Total and surface energy evolution.}
\end{subfigure}
\begin{subfigure}{0.49\textwidth}
\centering
\includegraphics[width=0.95\textwidth]{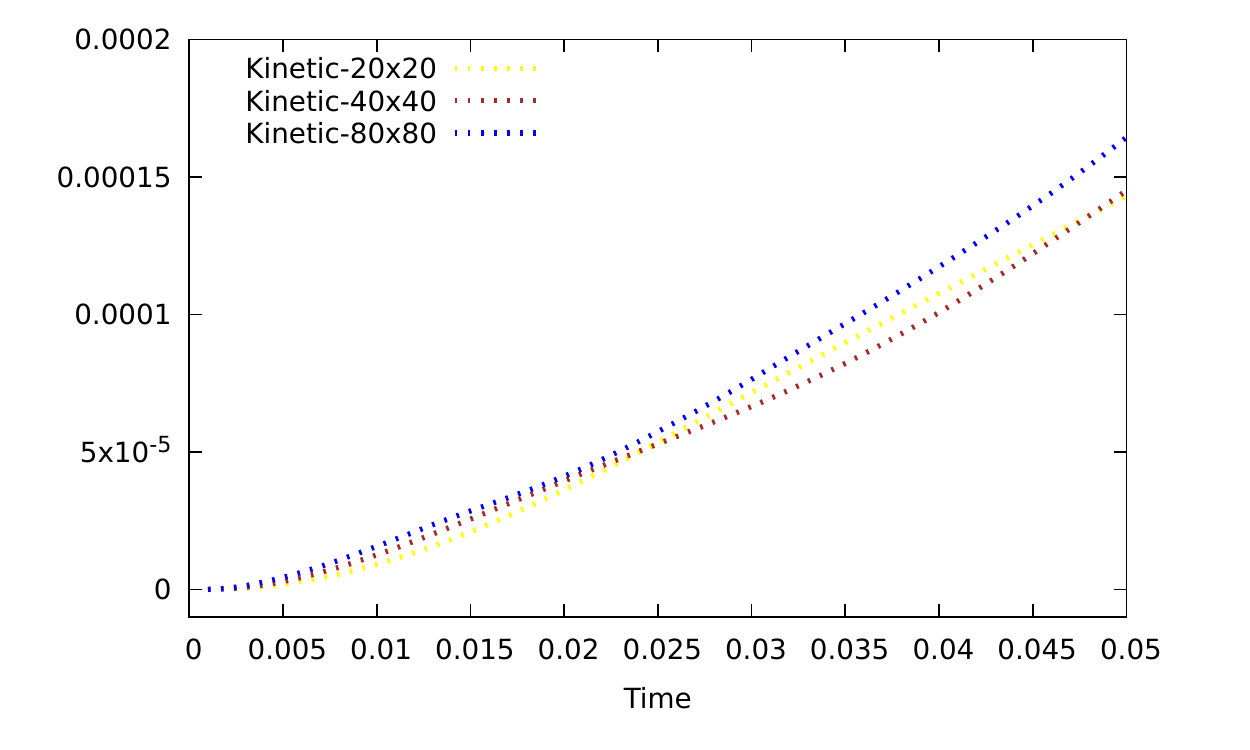}
\caption{Kinetic energy evolution.}
\end{subfigure}
\caption{Static droplet. Energy evolution for the various meshes.}
\label{fig:Static droplet Energy evo}
\end{figure}

\begin{figure}[h!]
\begin{subfigure}{0.49\textwidth}
\centering
\includegraphics[width=0.95\textwidth]{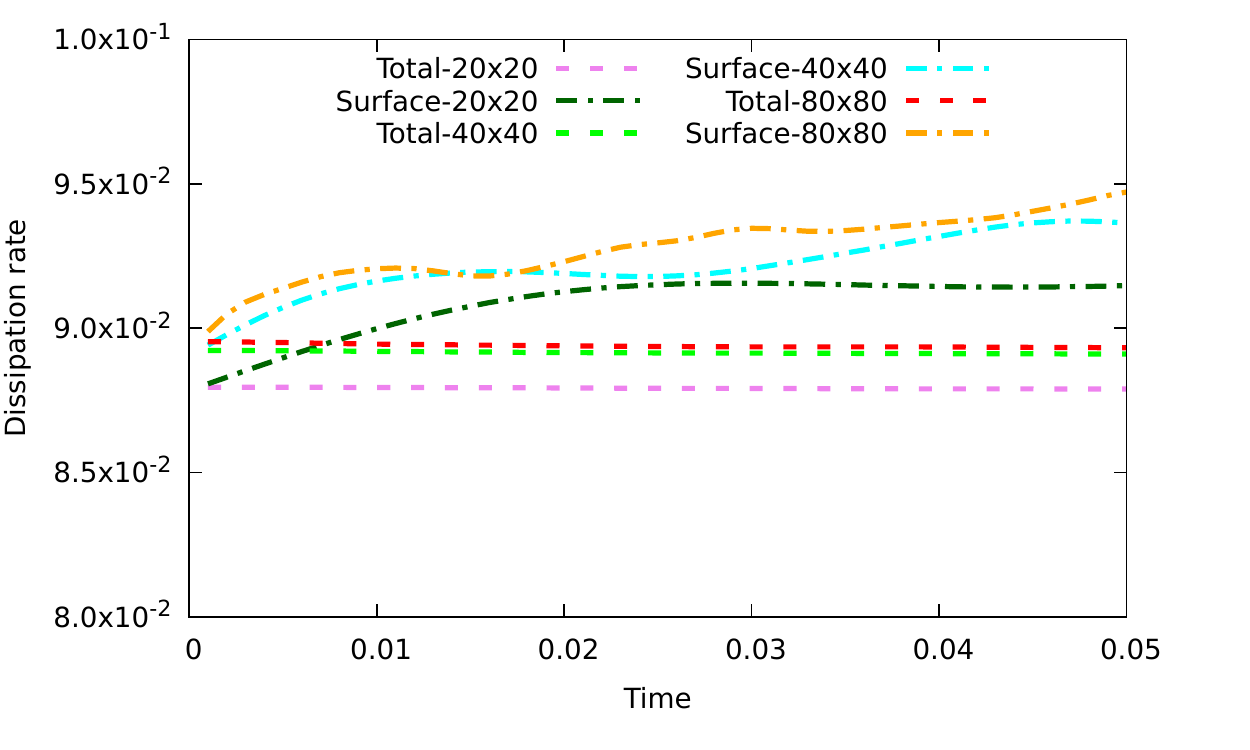}
\caption{Total and surface energy dissipation.}
\end{subfigure}
\begin{subfigure}{0.49\textwidth}
\centering
\includegraphics[width=0.95\textwidth]{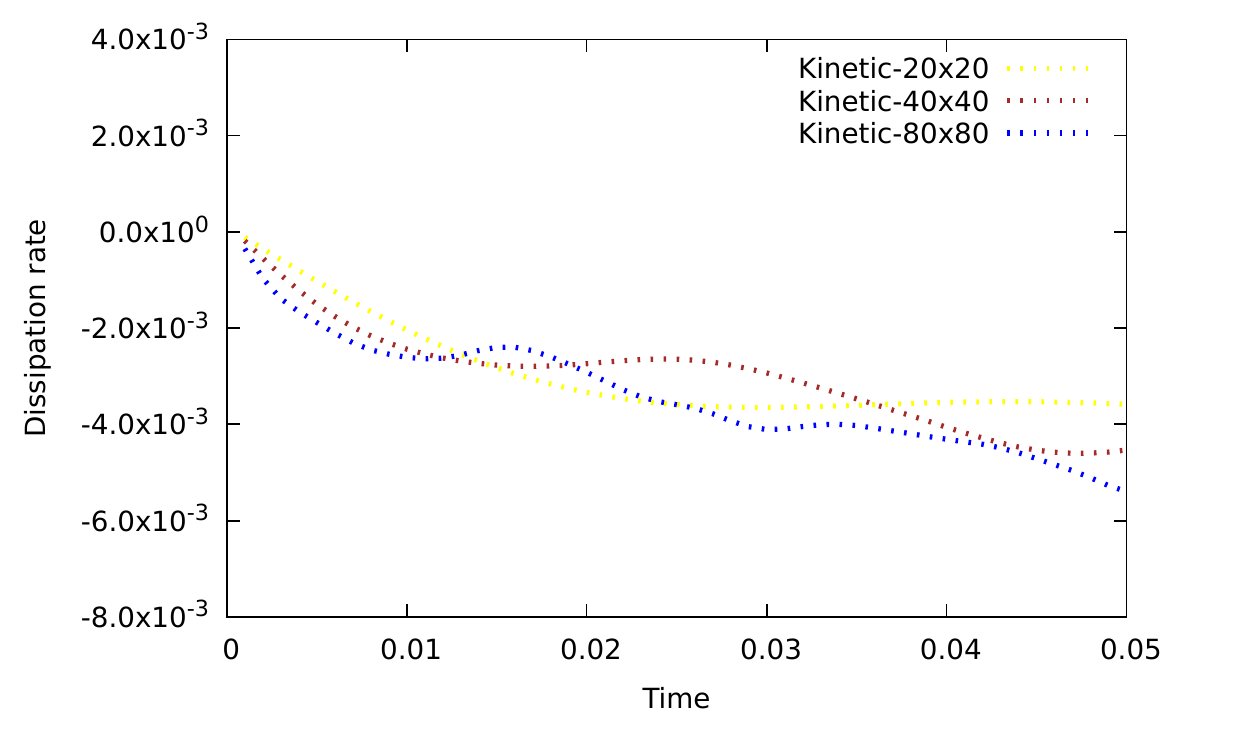}
\caption{Kinetic energy dissipation.}
\end{subfigure}
\caption{Static droplet. Energy dissipation for the various meshes.}
\label{fig:Static droplet Energy diss}
\end{figure}

Note that this test-case represents a stable situation and as such velocities and thus the kinetic energy should vanish.
Since the system is not in a total energy-stable state we note the occurrence of parasitic currents.
We report the magnitude of these currents in \cref{fig:parasitic}.
Even though the parasitic currents are very small, they are unfortunately present. This is a well-known problem. One can use several `tricks' to reduce parasitic currents. A possibility is to use a so-called balanced-force algorithm \cite{abadie2015combined} which assumes that the curvature is determined analytically.
\begin{figure}[h!]
\begin{subfigure}{0.47\textwidth}
\centering
\includegraphics[width=0.95\textwidth]{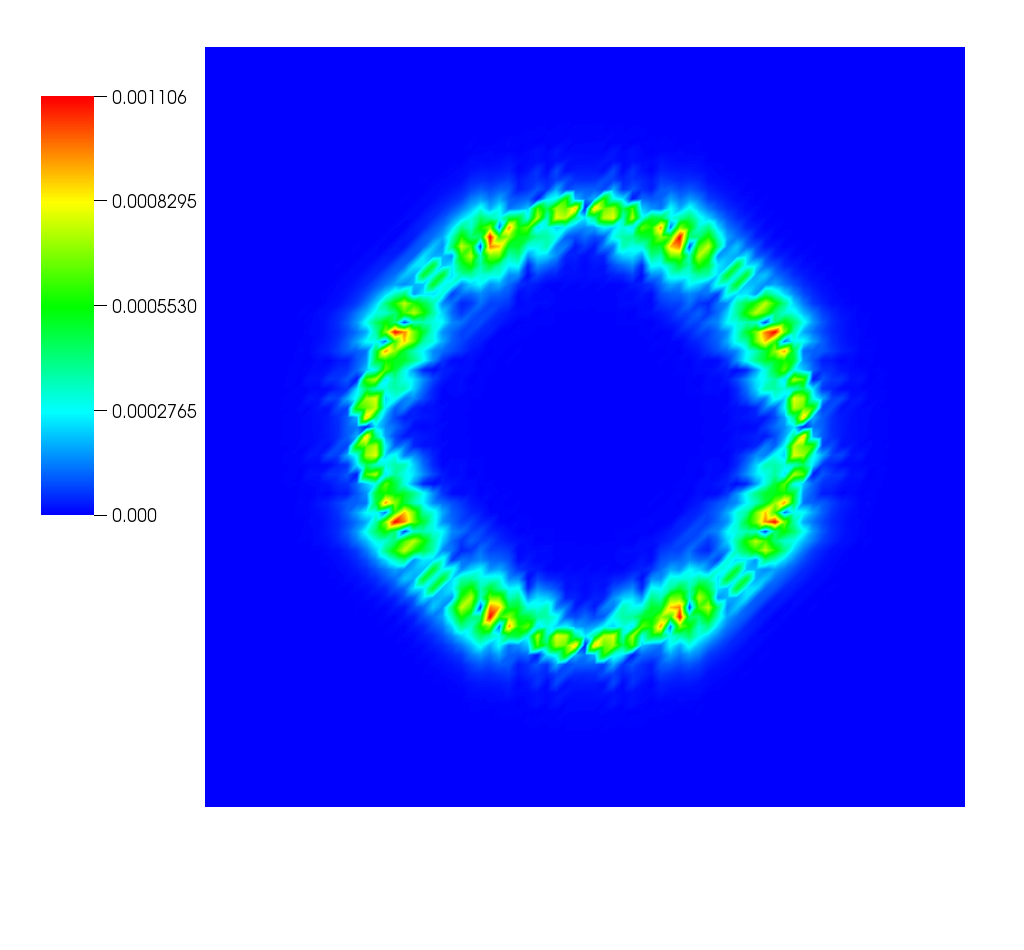}
\caption{At time-step $1$.}
\end{subfigure}
\begin{subfigure}{0.47\textwidth}
\centering
\includegraphics[width=0.95\textwidth]{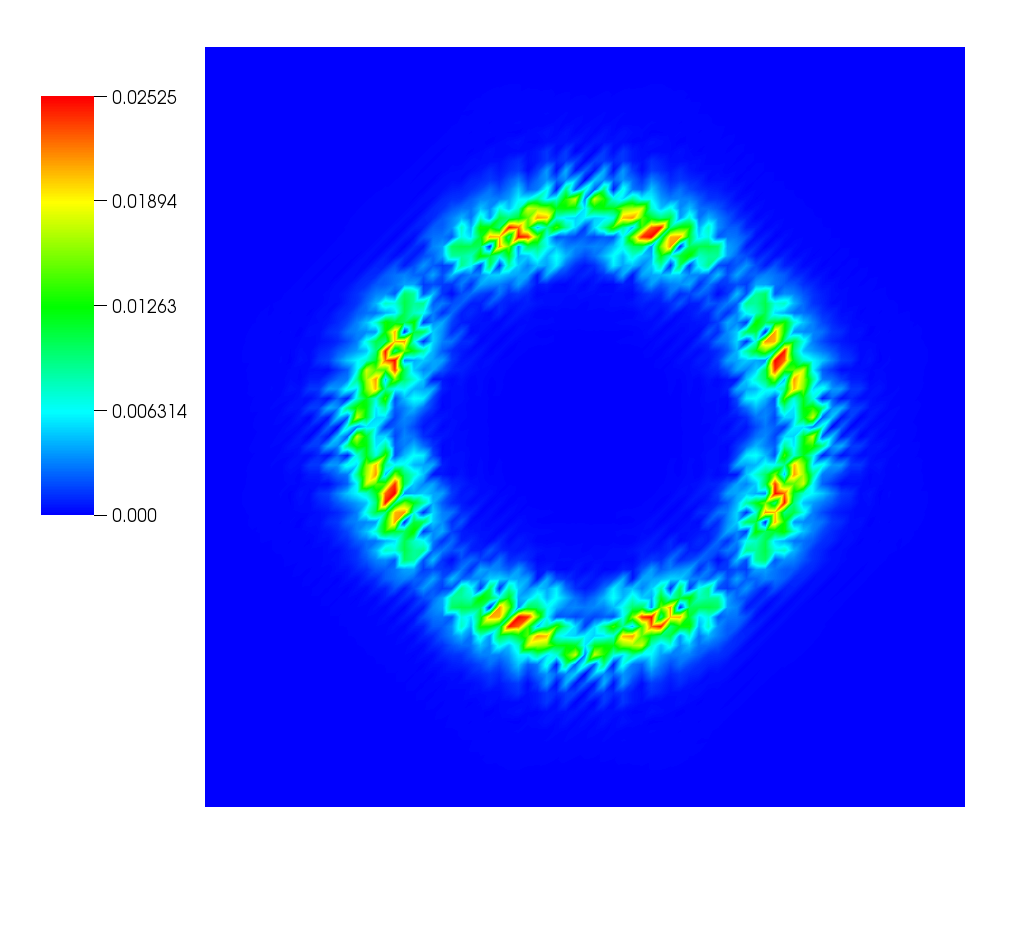}
\caption{At time-step $50$.}
\end{subfigure}
\caption{Norm of the velocity.}
\label{fig:parasitic}
\end{figure}

\begin{rmk}
We note that additional dissipation mechanisms for the surface evolution can upset energy-stability of the system. Well-balanced dissipation, introduced for the Navier-Stokes-Korteweg equations \cite{giesselmann2014energy}, is a possible strategy to resolve this.
\end{rmk}
\newpage
In \cref{fig:auxiliary} we plot the variable $v_{n+1}^h$. Note that the maximum theoretical value is
\begin{align}
 \max_{\bx \in \Omega} v =&~ -\sigma \min_{\bx \in \Omega} \left(\delta_\eps(\phi) \nabla \cdot \left(\dfrac{\nabla \phi}{\|\nabla \phi\|_{\epsilon,2}}\right)\right) \nn\\
 \approx&~ \frac{\sigma}{2} \max_{\bx \in \Omega}\delta_\eps(\phi)  \nn\\
 \approx&~ 161.3,
\end{align}
where the $\max_{\bx \in \Omega} \delta_\eps(\phi) = \max_{\bx \in \Omega} \frac{1}{\eps} (H^p)^{(1)}(\frac{\phi}{\eps})= \frac{1}{2 h_K}\max_{\bx \in \Omega} (H^p)^{(1)}(\frac{\phi}{\eps}) = \frac{1}{2*\frac{8}{80}*\sqrt{2}} \frac{5}{4}$. We see that the finest mesh is able to accurately represent $v_{n+1}^h$ whereas on the coarser meshes $v_{n+1}^h$ is smeared out significantly.

\begin{figure}[h!]
\begin{subfigure}{0.33\textwidth}
\centering
\includegraphics[width=0.95\textwidth]{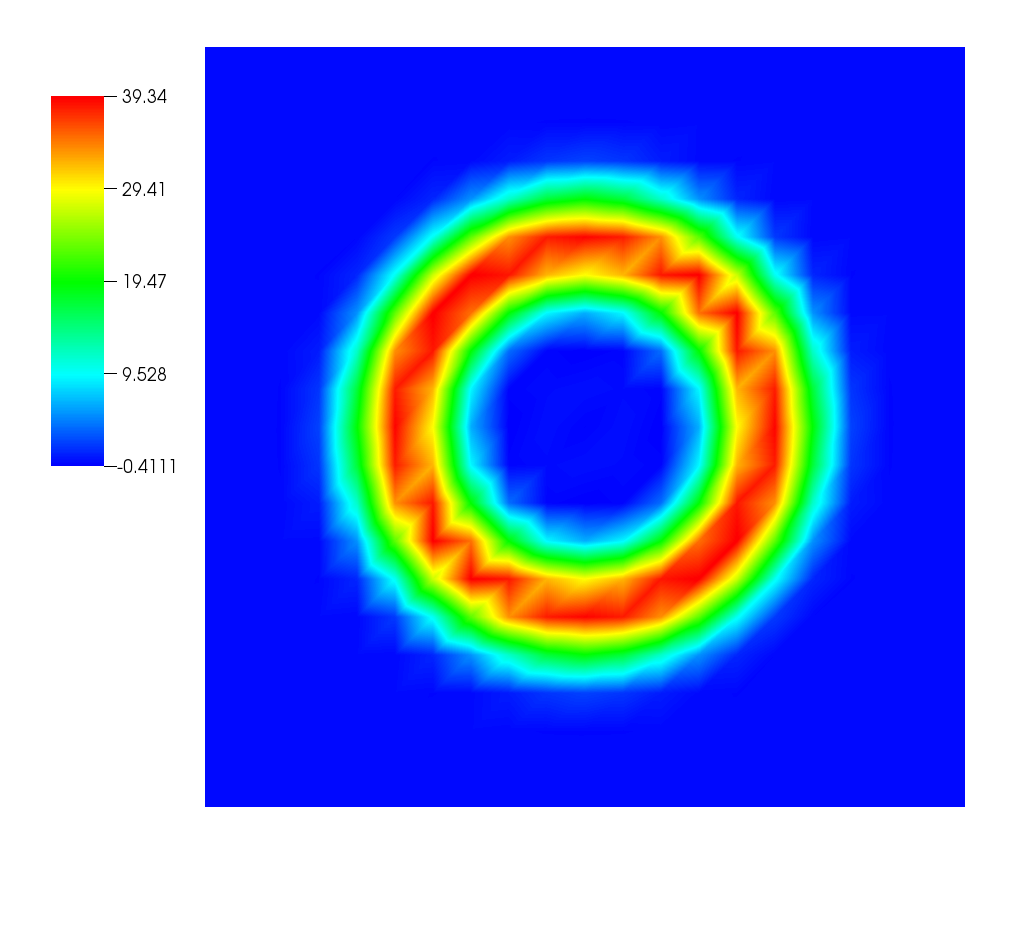}
\caption{20$\times$20 mesh.}
\end{subfigure}
\begin{subfigure}{0.33\textwidth}
\centering
\includegraphics[width=0.95\textwidth]{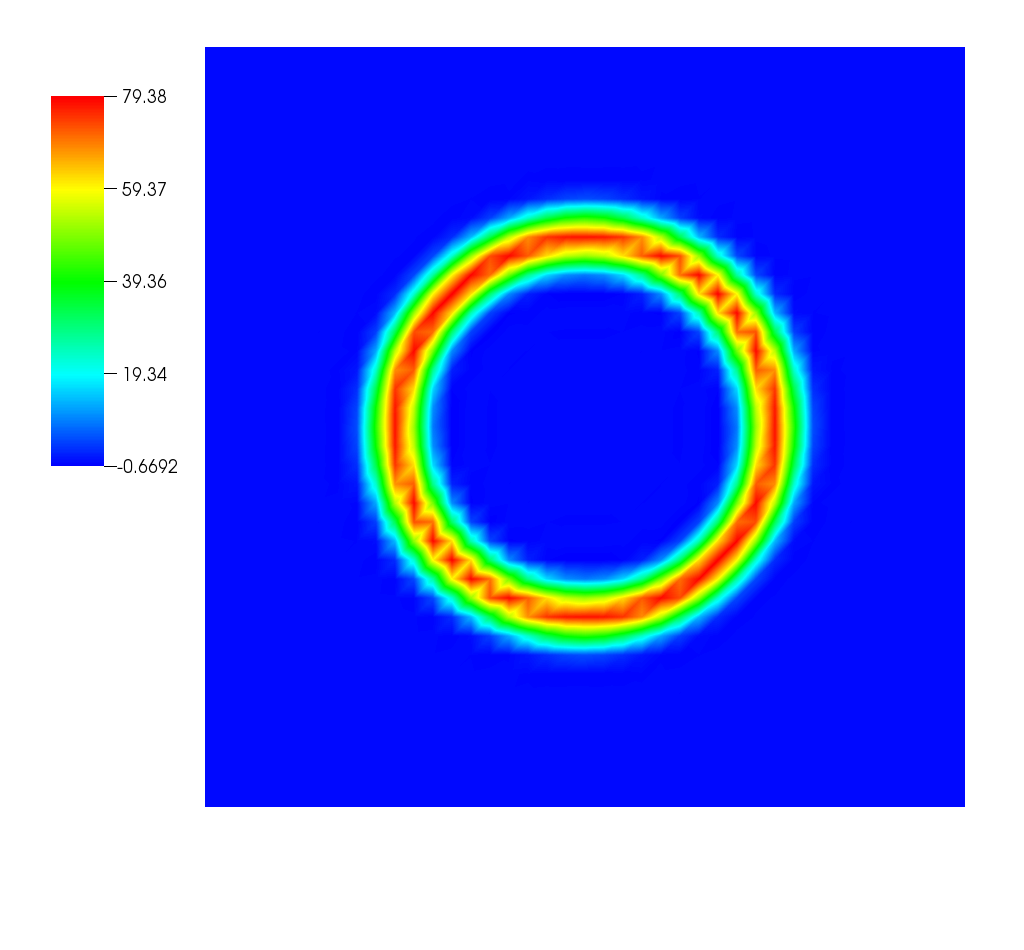}
\caption{40$\times$40 mesh.}
\end{subfigure}
\begin{subfigure}{0.33\textwidth}
\centering
\includegraphics[width=0.95\textwidth]{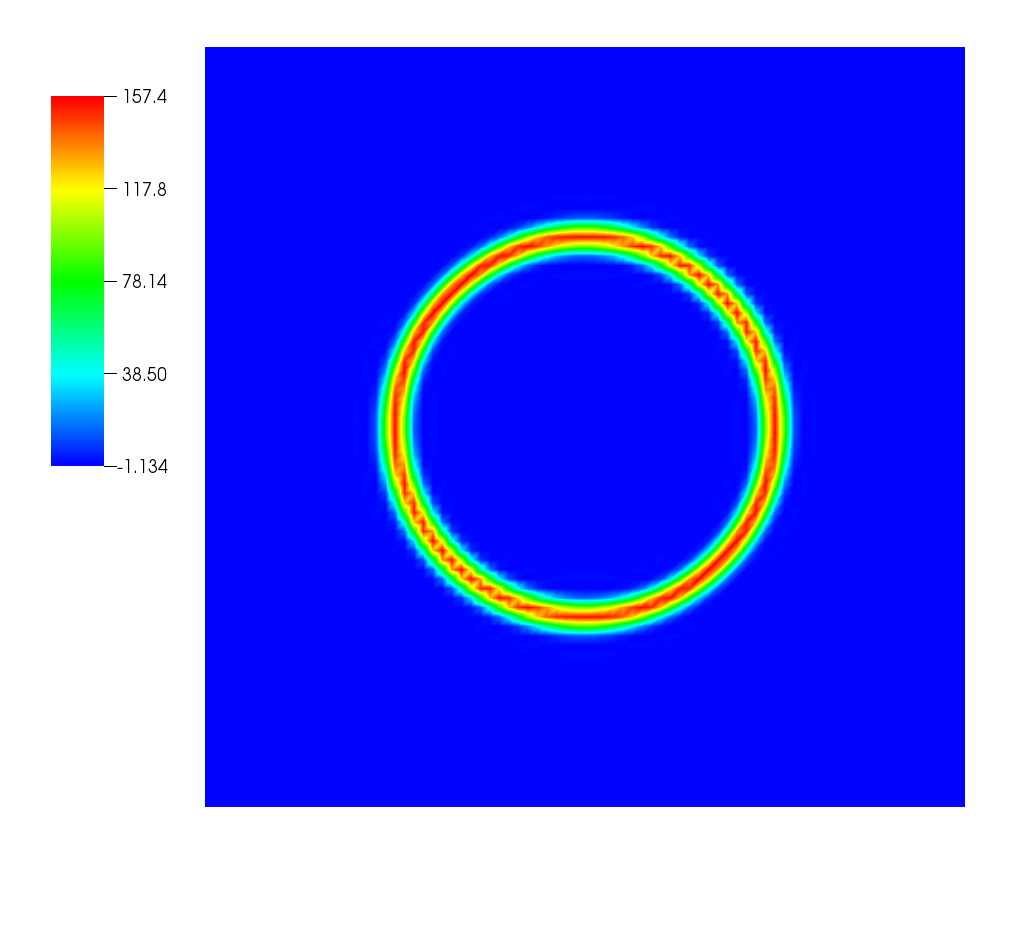}
\caption{80$\times$80 mesh.}
\end{subfigure}
\caption{The auxiliary variable $v$ for the various meshes.}
\label{fig:auxiliary}
\end{figure}

\subsection{Droplet coalescence 2D}
In this example, inspired by Gomez et al. \cite{gomez2010isogeometric}, we simulate the merging of two droplets into a
single one. Gravitational forces are absent Due to pressure and capillarity forces the single droplet then develops to a circular shape.
We take as computational domain the unit box $\Omega = [0,1]^d$ and apply no-penetration boundary conditions. The initial configuration consists of two droplet at rest ($\bu_0 = \mathbf{0}$) with centers at $\mathbf{c}_1=(0.4, 0.5)$ and $\mathbf{c}_2=(0.78, 0.5)$ and radii $r_1 = 0.25$ and $r_2 = 0.1$ respectively.
The diffuse interfaces of the droplets initially overlap on a small part of the domain.
If this not the case the droplets remain at their position and thus no merging would occur.
In contrast with the Navier-Stokes Korteweg equations, in this situation the interface has a finite width, due to the definition of $H_\eps(\phi)$. The Navier-Stokes Korteweg equations have no absolute notion of interface width; its effect is decaying exponentially. 
The droplets have a larger density ($\rho_1=100$) than the surrounding fluid ($\rho_2=1$) while the viscosities are equal: $\mu_1 = \mu_2 = 1$. We take as surface tension the low value of $\sigma = 0.1$ which causes a slowly merging process. To initialize the level-set we split the domain into two parts ($x \leq 0.665$ and $x>0.665$), such that each contains one droplet, and apply the standard distance initialization to each subdomain. We use $50\times50$ elements, set the time-step as $\Delta t = 0.1$ and take $\mathcal{C}=0.4$.

We show in the \cref{fig:sol0p2,fig:sol0p6,fig:sol1p0,fig:sol1p8,fig:sol3p0,fig:sol8.0} a detailed view of the merging process. The colors patterns are set per snapshot such that difference are most apparent. 

\begin{figure}[h!]
\begin{center}
 \includegraphics[width=0.70\textwidth]{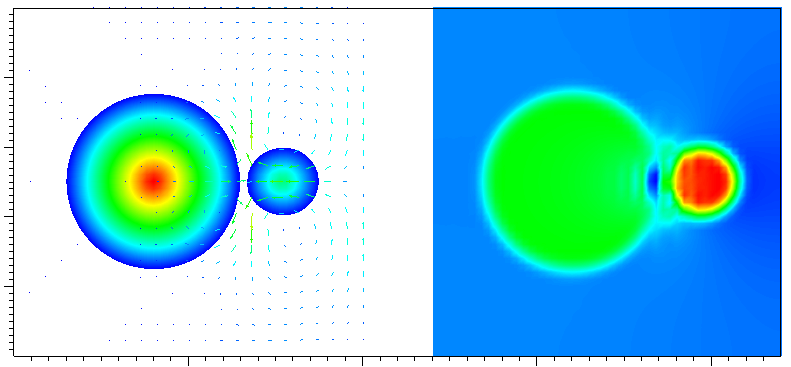}
\end{center}
\caption{Coalescence 2D. Solutions at $t=2$: level-set field and velocity arrows (left) and pressure field (right).}
\label{fig:sol0p2}
\end{figure}
\begin{figure}[h!]
\begin{center}
 \includegraphics[width=0.70\textwidth]{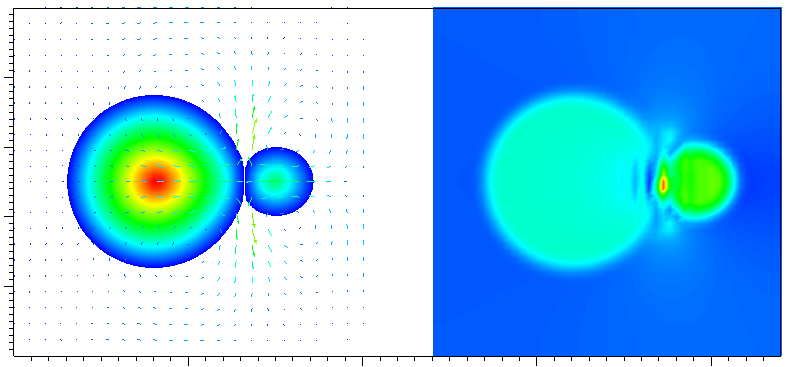}
\end{center}
\caption{Coalescence 2D. Solutions at $t=6$: level-set field and velocity arrows (left) and pressure field (right).}
\label{fig:sol0p6}
\end{figure}
\begin{figure}[h!]
\begin{center}
 \includegraphics[width=0.70\textwidth]{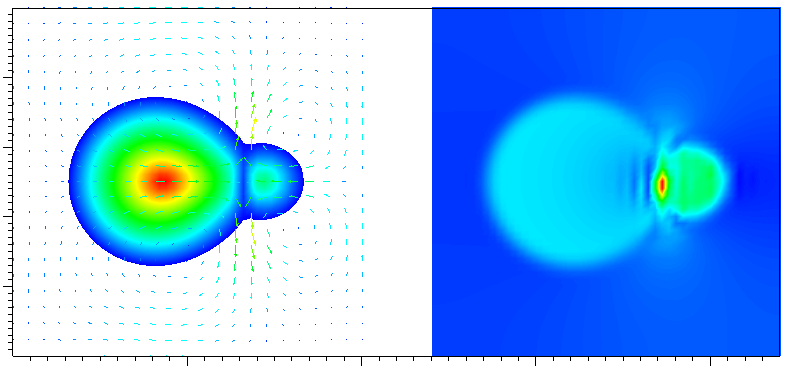}
\end{center}
\caption{Coalescence 2D. Solutions at $t=10$: level-set field and velocity arrows (left) and pressure field (right).}
\label{fig:sol1p0}
\end{figure}
\begin{figure}[h!]
\begin{center}
 \includegraphics[width=0.70\textwidth]{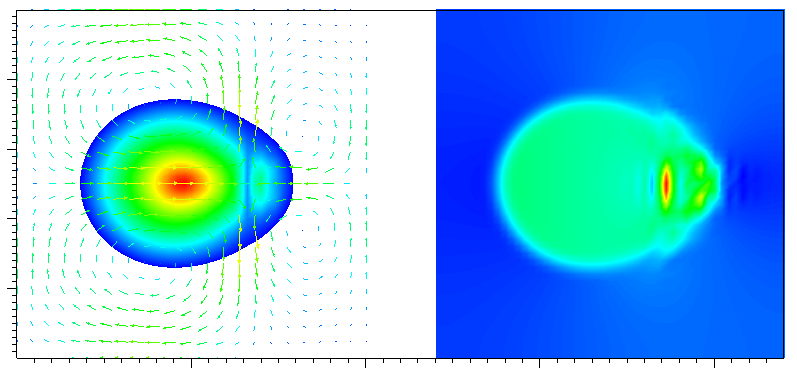}
\end{center}
\caption{Coalescence 2D. Solutions at $t=18$: level-set field and velocity arrows (left) and pressure field (right).}
\label{fig:sol1p8}
\end{figure}
\begin{figure}[h!]
\begin{center}
 \includegraphics[width=0.70\textwidth]{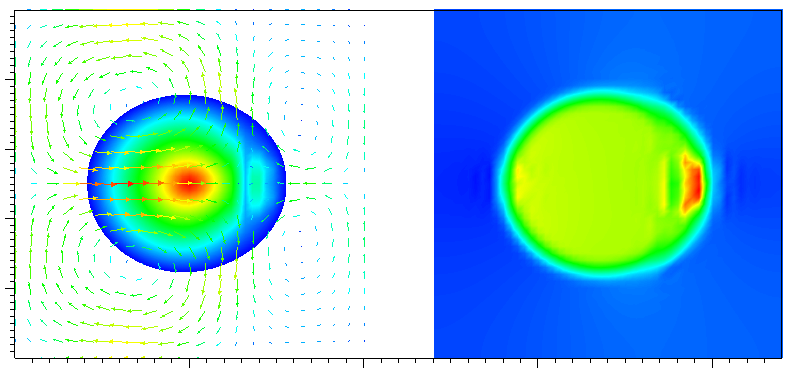}
\end{center}
\caption{Coalescence 2D. Solutions at $t=30$: level-set field and velocity arrows (left) and pressure field (right).}
\label{fig:sol3p0}
\end{figure}
\begin{figure}[h!]
\begin{center}
 \includegraphics[width=0.70\textwidth]{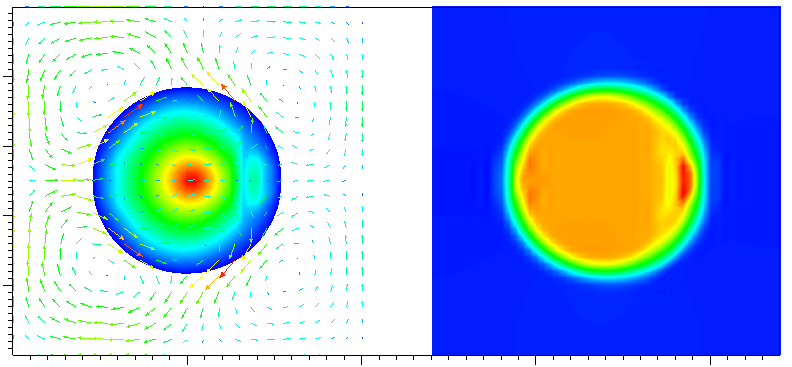}
\end{center}
\caption{Coalescence 2D. Solutions at $t=80$: level-set field and velocity arrows (left) and pressure field (right).}
\label{fig:sol8.0}
\end{figure}

In the \cref{fig:2D Energy evo,fig:2D Energy diss} we show the energy evolution and dissipation. In this case the theoretical value of the initial surface energy is $2 \pi (r_1+r_2) \sigma \approx 0.2199$. We observe that the total and surface energies monotonically decrease in time. The kinetic energy increases when the droplet move towards  each other ($t<10$) and decreases during the merging process and subsequently flattens out.

\clearpage

In order to test whether the equilibrium state has been reached we evaluate the circularity of the droplet. The circularity is defined as the fraction of the perimeter evaluated from the droplet volume and the perimeter itself:
\begin{align}
  \upgamma = \dfrac{2 \left(\pi \displaystyle\int_{\left\{\Omega: \phi >0\right\}} ~{\rm d}\Omega\right)^{1/2}}{\displaystyle\int_\Omega  \delta_\eps(\phi)\|\nabla \phi\|_{\epsilon,2}~{\rm d}\Omega}.
\end{align}
The circularity depicted in \cref{fig: Circularity} confirms the equilibrium state as $\upgamma$ tends to $1$.
\begin{figure}[h!]
\begin{subfigure}{0.49\textwidth}
\centering
\includegraphics[width=0.95\textwidth]{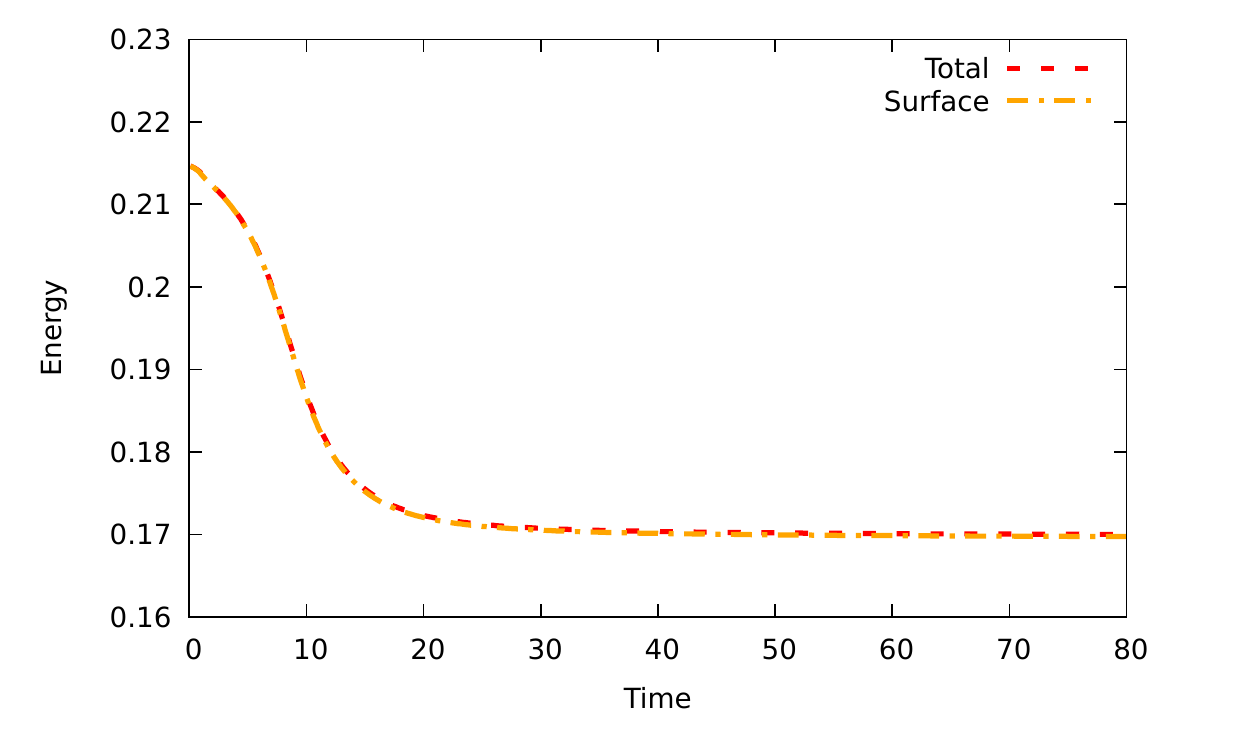}
\caption{Total and surface energy evolution.}
\end{subfigure}
\begin{subfigure}{0.49\textwidth}
\centering
\includegraphics[width=0.95\textwidth]{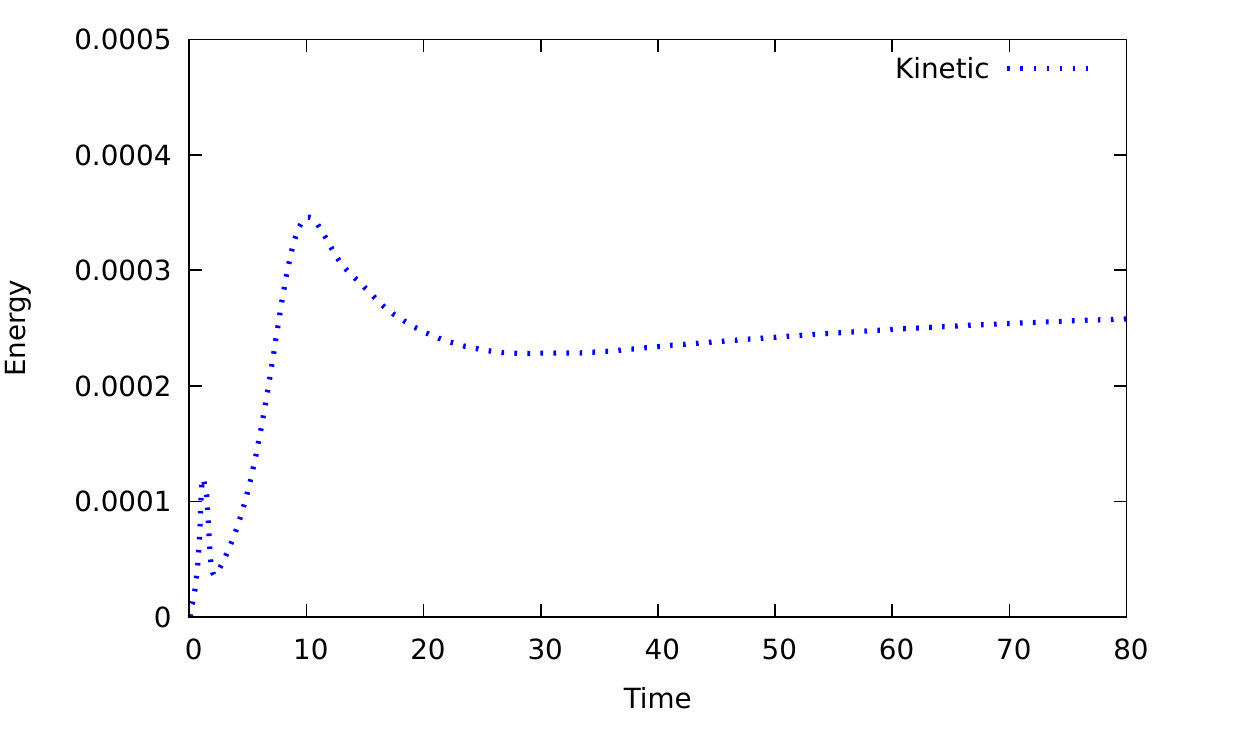}
\caption{Kinetic energy evolution.}
\end{subfigure}
\caption{Coalescence 2D. Energy evolution.}
\label{fig:2D Energy evo}
\end{figure}

\begin{figure}[h!]
\begin{subfigure}{0.49\textwidth}
\centering
\includegraphics[width=0.95\textwidth]{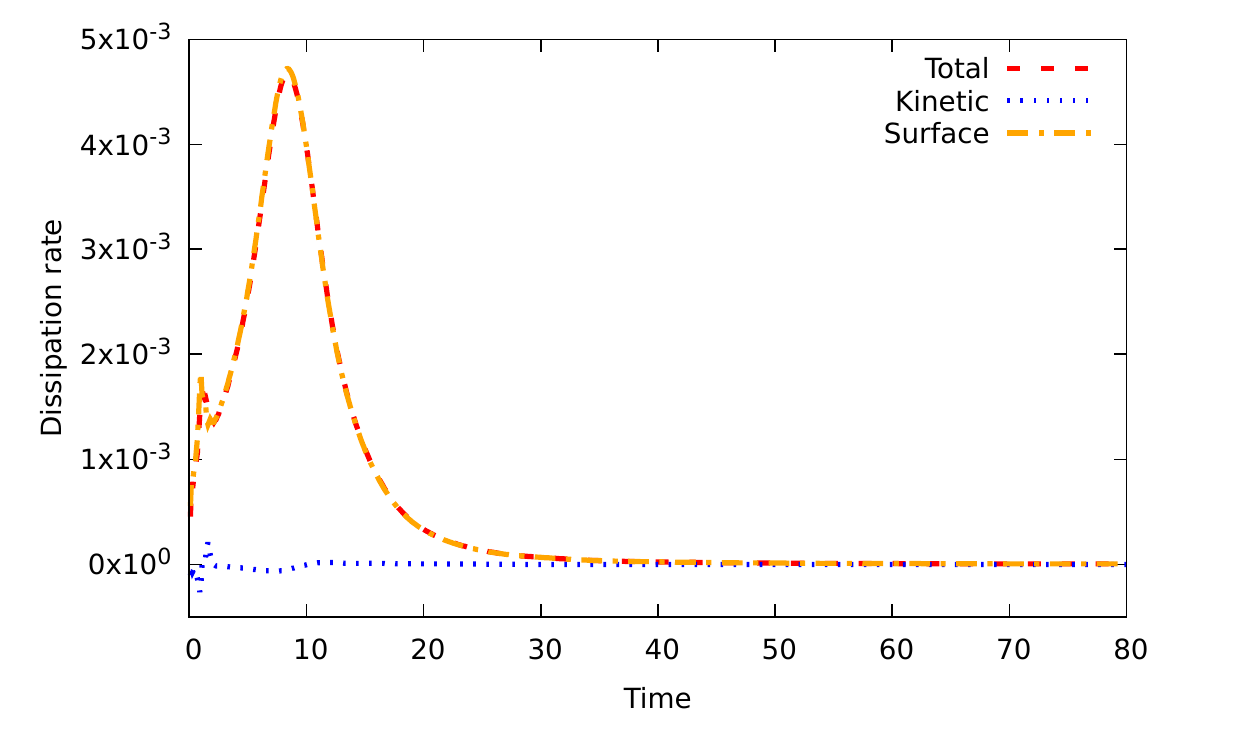}
\caption{Energy dissipation rate.}
\label{fig:2D Energy diss}
\end{subfigure}
\begin{subfigure}{0.49\textwidth}
\centering
\includegraphics[width=0.95\textwidth]{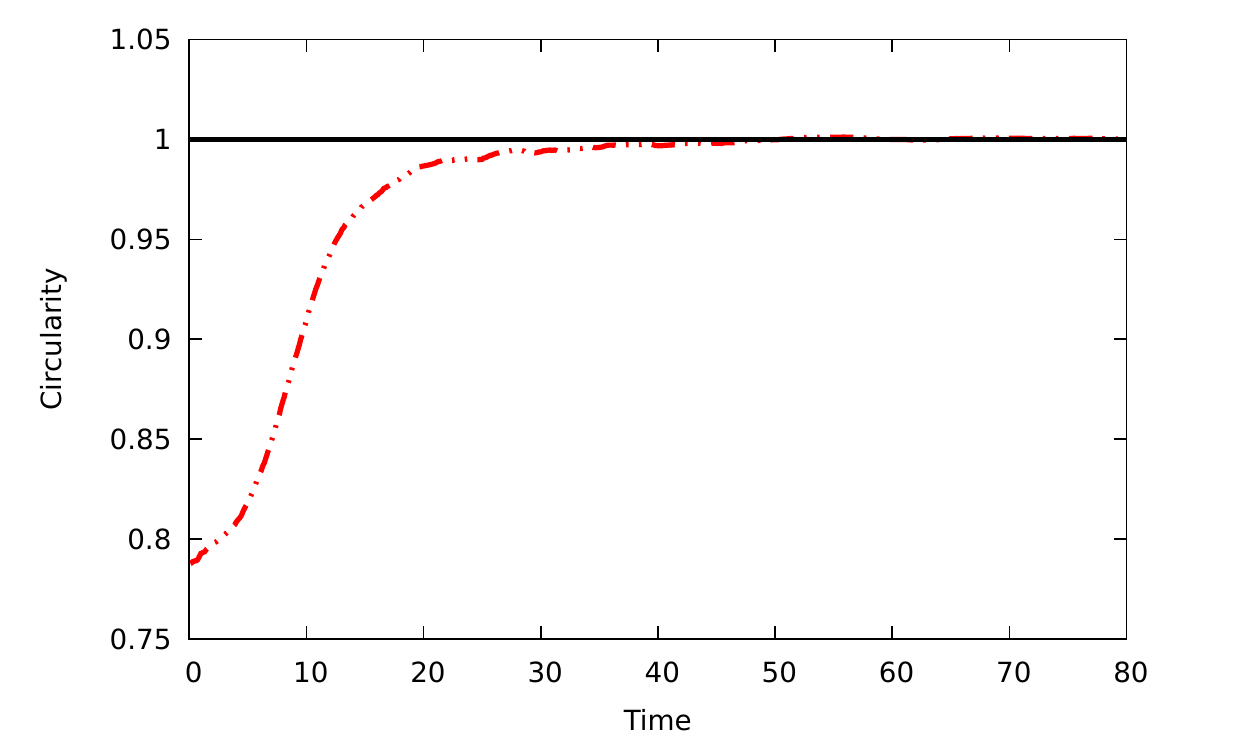}
\caption{Circularity.}
\label{fig: Circularity}
\end{subfigure}
\caption{Coalescence 2D. Energy dissipation rate and circularity.}
\label{fig:2D Energy diss-circ}
\end{figure}

\newpage
\newpage
\subsection{Droplet coalescence 3D}
Here we simulate the merging of two droplets in three dimensions. We use the same physical parameters as in the two-dimensional case. 
The centers of the droplets are at $\mathbf{c}_1= (0.4, 0.5, 0.6)$ and $\mathbf{c}_2=(0.75, 0.5, 0.5)$ and the radii remain the same: $r_1 = 0.25$ and $r_2 = 0.1$.
Also here the diffuse interfaces of the droplets initially overlap.
Again, to initialize the level-set we partition the domain, see \cref{fig:3D initial slice} and apply the standard distance initialization to each subdomain. The initial configuration is depicted in \cref{fig:3D initial condition}. We use $50\times50\times50$ elements, set the time-step as $\Delta t = 0.1$ and take $\mathcal{C}=0.1$.

\begin{figure}[h!]
\begin{subfigure}{0.49\textwidth}
\centering
\includegraphics[width=0.88\textwidth]{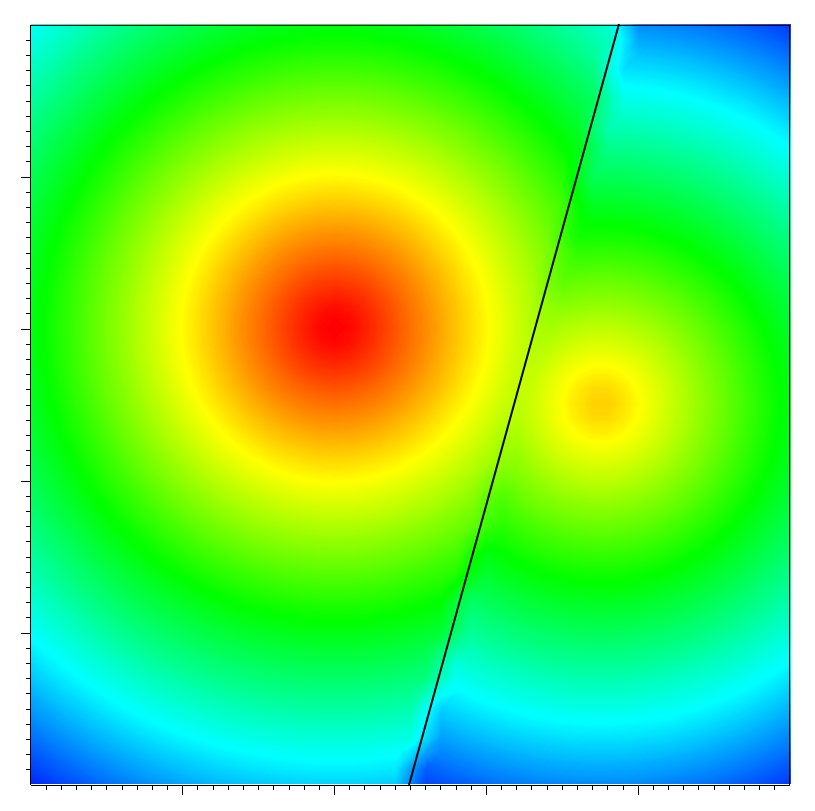}
\caption{Slice of initial condition at $y=0.5$}
\label{fig:3D initial slice}
\end{subfigure}
\begin{subfigure}{0.49\textwidth}
\centering
\includegraphics[width=0.95\textwidth]{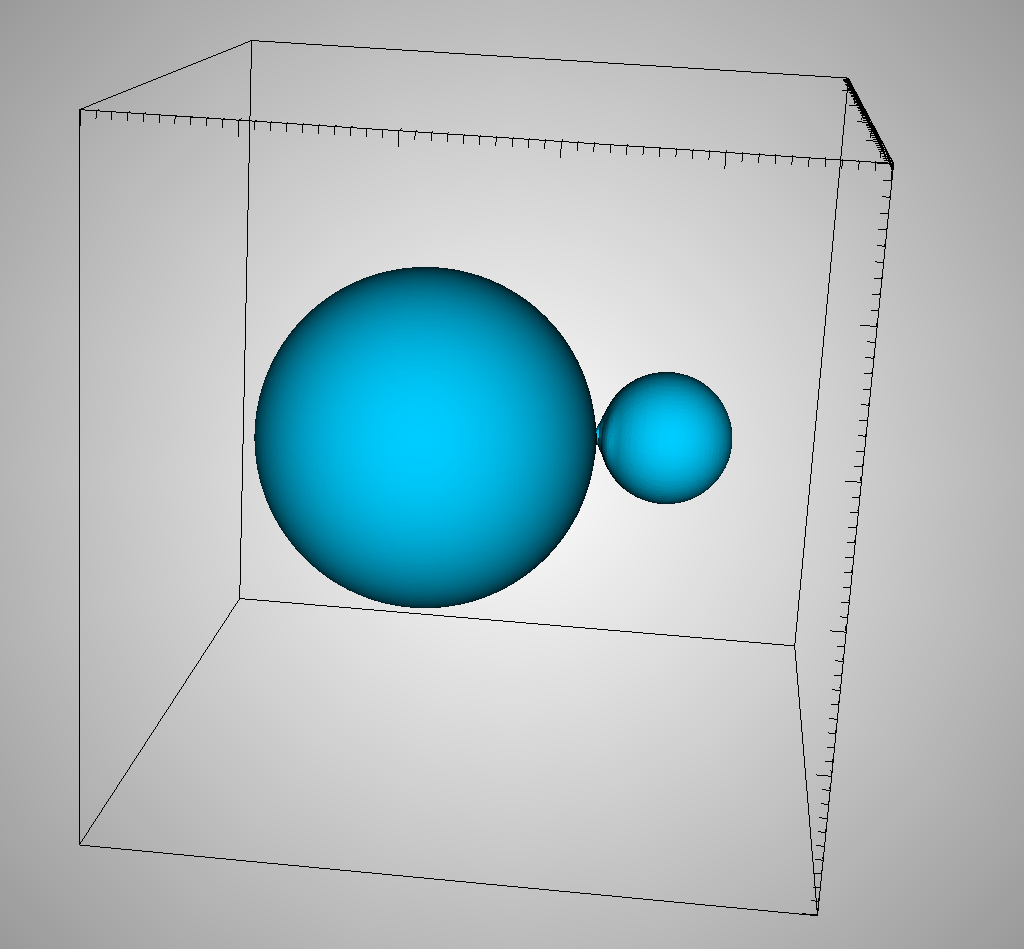}
\caption{Zero level-set contours of initial condition}
\label{fig:3D initial condition}
\end{subfigure}
\caption{Coalescence 3D. Initial condition.}
\label{fig:3D initial condition tot}
\end{figure}

We show in \cref{fig:Coalescence 3D. Solutions} snapshots of the merging process.
In \cref{fig:3D Energy evo diss} we visualize the energy evolution and dissipation. The theoretical value of the initial surface energy is $4 \pi (r_1^2+r_2^2) \sigma \approx 0.0911$. The behavior of the various energies is similar as in the two-dimensional case. Also in this case the energy-dissipative property of the numerical method is confirmed.

\begin{figure}[h!]
\begin{subfigure}{0.49\textwidth}
\centering
\includegraphics[width=0.95\textwidth]{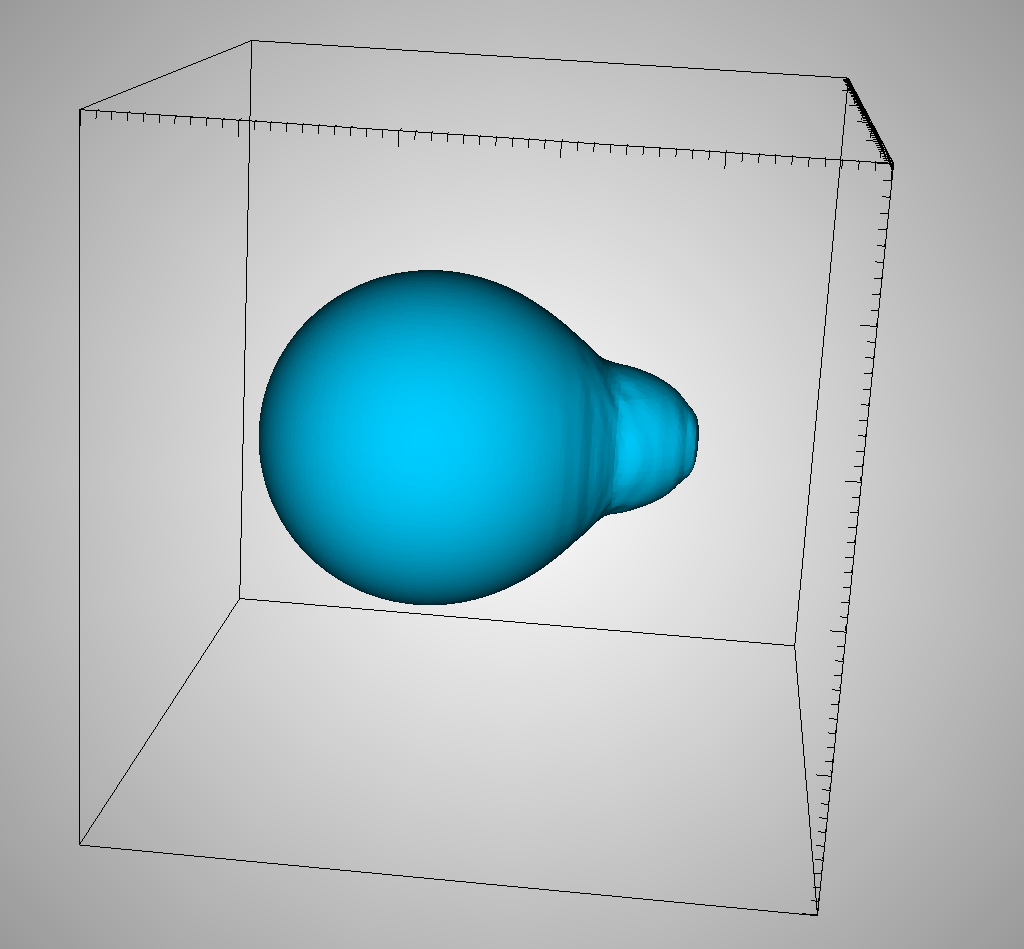}
\caption{Zero level-set contours at $t=10$}
\end{subfigure}
\begin{subfigure}{0.49\textwidth}
\centering
\includegraphics[width=0.95\textwidth]{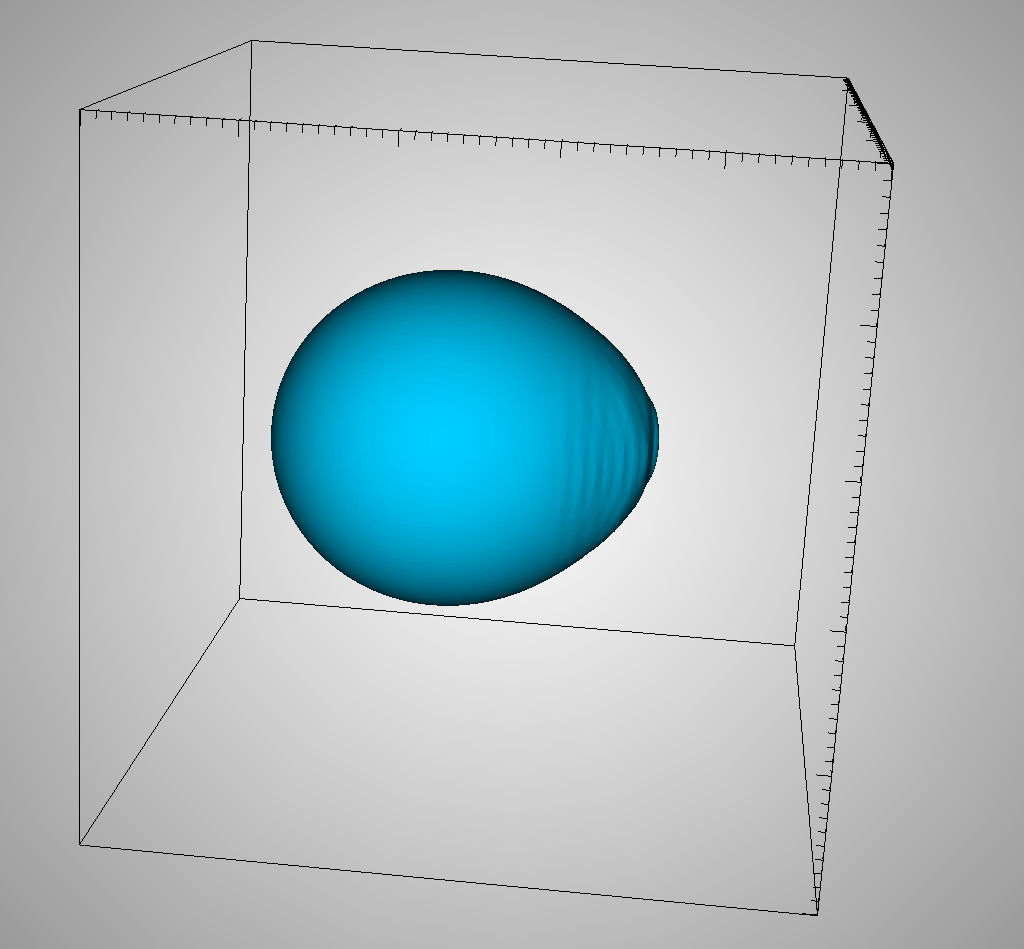}
\caption{Zero level-set contours at $t=20$}
\end{subfigure}
\caption{Coalescence 3D. Solutions at $t=10$ and $t=20$.}
\label{fig:Coalescence 3D. Solutions}
\end{figure}

\begin{figure}[h!]
\begin{subfigure}{0.33\textwidth}
\centering
\includegraphics[width=0.95\textwidth]{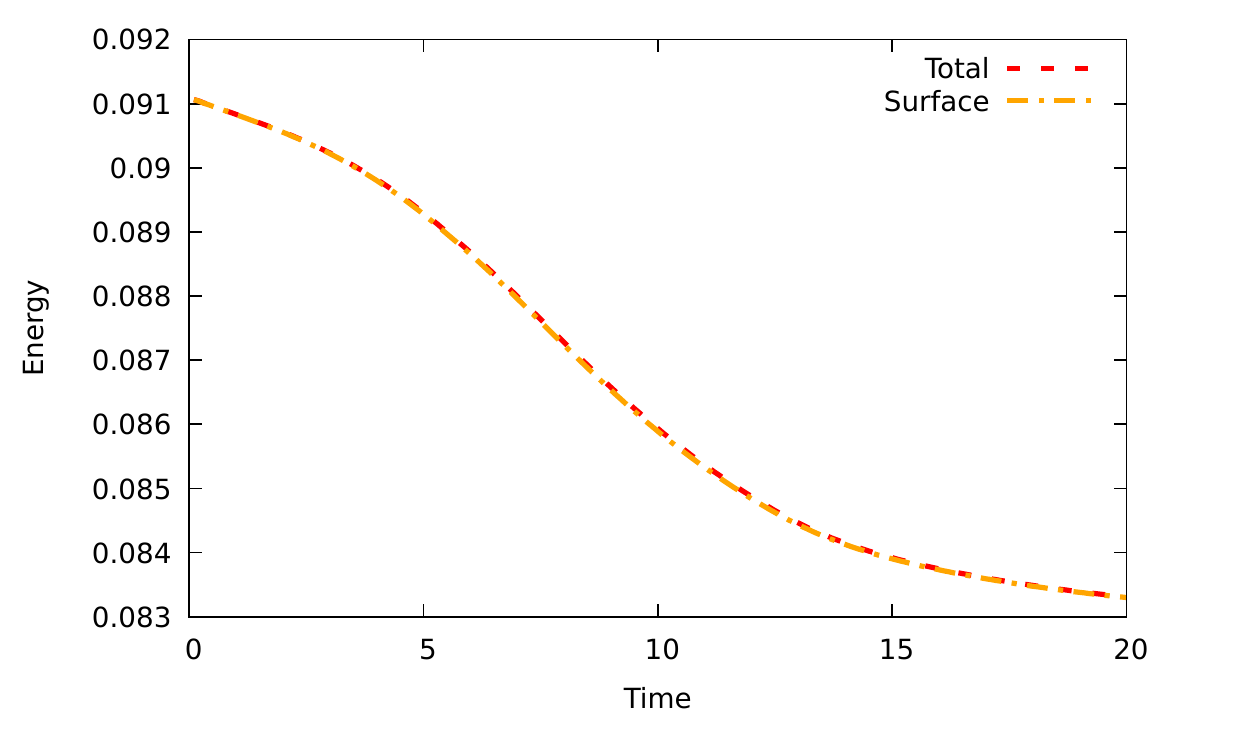}
\caption{Total and surface energy evolution.}
\end{subfigure}
\begin{subfigure}{0.33\textwidth}
\centering
\includegraphics[width=0.95\textwidth]{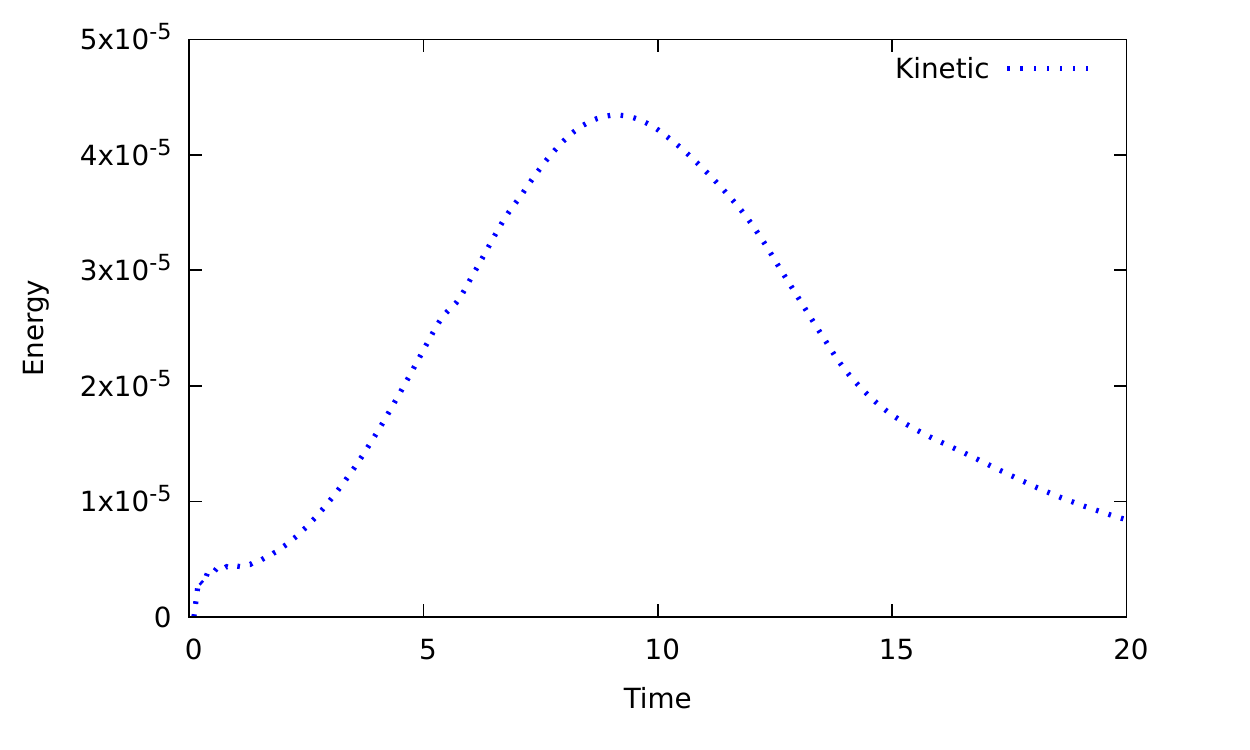}
\caption{Kinetic energy evolution.}
\end{subfigure}
\begin{subfigure}{0.33\textwidth}
\centering
\includegraphics[width=0.95\textwidth]{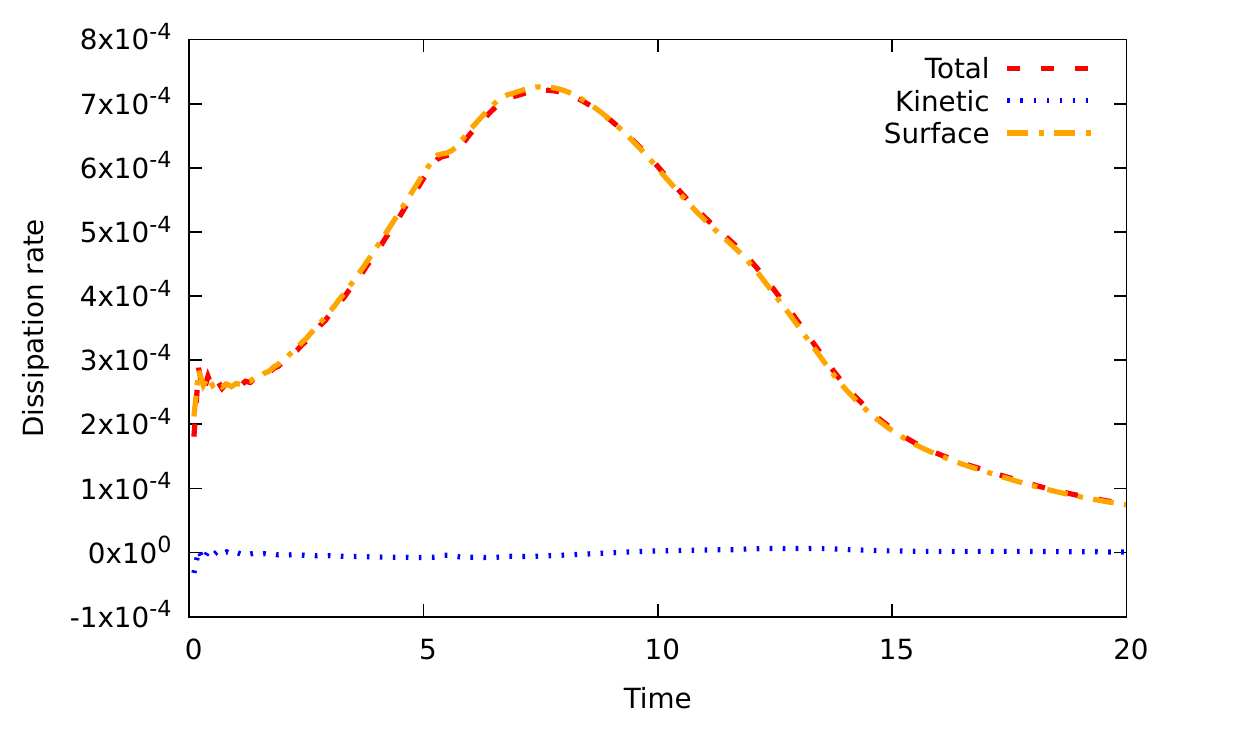}
\caption{Energy dissipation rate.}
\end{subfigure}
\caption{Coalescence 3D. Energy evolution and dissipation rate.}
\label{fig:3D Energy evo diss}
\end{figure}

\newpage

\section{Conclusion}

In this work we have proposed a new fully-discrete energy-stable level-set method for the incompressible Navier-Stokes equations with surface tension.
To the best knowledge of the authors, this is the first provable energy-dissipative level-set method.
Apart from being energetically-stable, the method satisfies the maximum principle for the density and is pointwise divergence-free.

We have provided a consistent derivation of our diffuse-interface level-set model starting from a sharp-interface model.
In addition we have presented a detailed analysis of both models in term of energy behavior.
This analysis implies that an energy-dissipative Galerkin-type discretization of the diffuse-interface level-set model poses severe restrictions on the functional spaces.
Independently, standard second-order temporal discretizations are also not associated with an energy-dissipative structure.
Lastly, the diffuse-interface model contains an unwanted regularization term.
We circumvent each of these problems by creating extra space via the concept of functional entropy variables.
This introduces an extra variable to the model which is coupled via the surface tension term.
This leads in a natural way to the fully-discrete energy-stable level-set method.
The eventual methodology use isogeometric analysis to ensure divergence-free solutions.
Furthermore, the method is equipped with an SUPG stabilization mechanism in the level-set equation that is energetically-balanced in the momentum equation. Additionally, we use a residual-based discontinuity capturing term to stabilize the momentum equation.
The temporal discretization is performed using a perturbed mid-point scheme.
We have presented numerical examples in two and three dimensions which confirm the energy-stability of the method.

We see several research directions for further work. A first suggestion is to equip the developed method with multiscale stabilization mechanisms that are energetically stable. Attainable solutions may be inspired by stabilization mechanisms that are energetically stable for single fluid flow \cite{EiAk17ii, evans2020variational}. Other possible research directions entail the development of energy dissipative re-distancing procedures. This would allow to simulate more violent flows, such as a dam-break problem, in an energy-dissipative manner. Another missing feature of the level-set method is local mass conservation. Perhaps local mass conservation may be obtained by using similar techniques as presented in this paper.
Lastly, we suggest to look into the construction of (energetically-stable) level-set methods that preclude parasitic currents.
\appendix

\section{Equivalence surface tension models}
We show equivalence of the surface tension models for the sharp-interface model and the diffuse-interface level-set model.
\subsection{Sharp interface model}\label{sec: Surface tension evaluation}
In order to avoid directly evaluating the curvature in the surface tension term, one may employ integration by parts as proposed by B\"{a}nsch \cite{bansch2001finite}.
First we introduce some notation.
The normal extensions of the scalar field $f$ and vector field $\mathbf{v}$ defined on $\Gamma$ are, see also \cite{buscaglia2011variational}:
\begin{subequations}\label{eq: normal extensions}
\begin{align}
  \hat{f}(\mathbf{x}) :=&~ f(\varPi_\Gamma(\mathbf{x})),\\
  \hat{\mathbf{v}}(\mathbf{x}) :=&~ \mathbf{v}(\varPi_\Gamma(\mathbf{x})),
\end{align}
\end{subequations}
where $\varPi_\Gamma(\mathbf{x})$ is defined as the normal projector of $\mathbf{x}$ onto the interface $\Gamma$.
The surface gradients of these fields are now given by
\begin{subequations}\label{eq: surface gradients}
\begin{align}
  \nabla_\Gamma f          :=&~ \nabla \hat{f},\\
  \nabla_\Gamma \mathbf{v} :=&~ \nabla \hat{\mathbf{v}},
\end{align}
\end{subequations}
while the tangential divergence of $\mathbf{v}$ is the trace of the surface gradient:
\begin{align}
  \nabla_\Gamma \cdot \mathbf{v} :=&~{\rm Tr}(\nabla_\Gamma \mathbf{v}) = \nabla \cdot \hat{\mathbf{v}}.
\end{align}
Note the slight abuse of notation; we use the same notation for the surface gradient as employed for the surface gradient in the diffuse level-set model.
Alternative expressions for the surface gradients are
\begin{subequations}\label{eq: surface gradients, alternative}
\begin{align}
  \nabla_\Gamma f          =&~ \mathbf{P}_T\cdot\nabla f,\\
  \nabla_\Gamma \mathbf{v} =&~ \nabla \mathbf{v} \cdot \mathbf{P}_T,
\end{align}
\end{subequations}
where $\mathbf{P}_T$ denotes the tangential projection tensor:
\begin{align}
 \mathbf{P}_T = \mathbf{I} - \hat{\boldsymbol{\nu}} \otimes \hat{\boldsymbol{\nu}},
\end{align}
where $\hat{\boldsymbol{\nu}}$ is continuous extension of the outward unit normal pointing from $\Omega_1$ into $\Omega_2$ and $\mathbf{I}$ is identity matrix.
Using the above identities we have
\begin{align}\label{eq: identities appendix}
 \nabla \cdot \hat{\bw} =~\nabla_\Gamma \cdot \bw  
                        =~{\rm Tr}(\nabla_\Gamma \bw)  
                        =~{\rm Tr}(\mathbf{P}_T\nabla \bw )  
                        =~\mathbf{P}_T : \nabla \bw .                                                                  
\end{align}
\begin{lem}\label{lem: Buscaglia} Buscaglia et al. \cite{buscaglia2011variational}: For any tangentially differentiable vector field $\bw$ we have:
   \begin{align}
 \displaystyle\int_{\Gamma(t)} \nabla_\Gamma\cdot \bw  ~{\rm d} \Gamma =~ \displaystyle \int_{\Gamma(t)} \kappa \hat{\boldsymbol{\nu}} \cdot \bw ~ {\rm d}\Gamma +~\displaystyle \int_{\partial \Gamma(t)} \boldsymbol{\nu}_\partial \cdot \bw~ {\rm d}(\partial \Gamma).
\end{align}
\end{lem}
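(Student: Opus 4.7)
The plan is to decompose the test vector field into tangential and normal components on $\Gamma(t)$, namely $\bw = \bw_T + w_\nu \hat{\boldsymbol{\nu}}$ with $\bw_T := \mathbf{P}_T \bw$ and $w_\nu := \bw \cdot \hat{\boldsymbol{\nu}}$, and then to treat each piece by a separate standard tool. Using the linearity of $\nabla_\Gamma \cdot$, I would split the left-hand side of the claim as $\int_{\Gamma(t)} \nabla_\Gamma \cdot \bw_T \, {\rm d}\Gamma + \int_{\Gamma(t)} \nabla_\Gamma \cdot (w_\nu \hat{\boldsymbol{\nu}})\, {\rm d}\Gamma$ and match each with one term on the right-hand side.

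For the tangential piece, I would invoke the surface divergence theorem (the tangential Stokes theorem on a smooth orientable $(d-1)$-manifold with boundary): for any tangentially differentiable field $\bw_T$ lying in $T_x\Gamma$ one has $\int_{\Gamma(t)} \nabla_\Gamma \cdot \bw_T \, {\rm d}\Gamma = \int_{\partial \Gamma(t)} \boldsymbol{\nu}_\partial \cdot \bw_T \, {\rm d}(\partial \Gamma)$. Because $\boldsymbol{\nu}_\partial \in T_x\Gamma$ by construction, $\boldsymbol{\nu}_\partial \cdot \bw_T = \boldsymbol{\nu}_\partial \cdot \bw$, which recovers the boundary contribution of the claim.

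For the normal piece, I would apply a Leibniz-type identity for the tangential divergence, $\nabla_\Gamma \cdot (w_\nu \hat{\boldsymbol{\nu}}) = (\nabla_\Gamma w_\nu)\cdot \hat{\boldsymbol{\nu}} + w_\nu\, \nabla_\Gamma \cdot \hat{\boldsymbol{\nu}}$, deduced by writing both sides through the normal extensions in (\ref{eq: normal extensions})--(\ref{eq: surface gradients}) and using the expressions (\ref{eq: surface gradients, alternative}). The first term vanishes because a surface gradient of a scalar lies in $T_x\Gamma$ and is therefore orthogonal to $\hat{\boldsymbol{\nu}}$. The second term reduces to $w_\nu \kappa$ because, by definition of the curvature used throughout the paper, $\kappa = \nabla \cdot \hat{\boldsymbol{\nu}} = \nabla_\Gamma \cdot \hat{\boldsymbol{\nu}}$ (the contribution $(\hat{\boldsymbol{\nu}}\otimes\hat{\boldsymbol{\nu}}):\nabla \hat{\boldsymbol{\nu}}$ is zero since $\hat{\boldsymbol{\nu}}$ has unit length, so $\mathbf{P}_T:\nabla\hat{\boldsymbol{\nu}} = \nabla\cdot\hat{\boldsymbol{\nu}}$). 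Integrating over $\Gamma(t)$ yields $\int_{\Gamma(t)} \kappa\, \hat{\boldsymbol{\nu}}\cdot \bw\, {\rm d}\Gamma$, which is the bulk contribution of the claim; adding the two pieces completes the argument.

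The main obstacle is actually justifying the surface divergence theorem for tangential fields used in the first step. One would either (i) cover $\Gamma(t)$ by local smooth charts and reduce to the classical Stokes theorem on manifolds, or (ii) work in a tubular neighborhood of $\Gamma(t)$, extend $\bw_T$ as constant along normals, and invoke the ordinary divergence theorem on a thin slab followed by a limit $\epsilon \to 0$. Once that piece is accepted (as it is in \cite{buscaglia2011variational}), the remaining computations are purely algebraic manipulations with the projection $\mathbf{P}_T$ and the definitions of $\nabla_\Gamma$ and $\kappa$ already in use.
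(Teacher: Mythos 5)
The paper offers no proof of this lemma at all: it is stated as a citation to Buscaglia et al.\ \cite{buscaglia2011variational} and used as a black box, so there is no in-paper argument to compare yours against. Your argument is the standard derivation and it is correct. The decomposition $\bw=\mathbf{P}_T\bw+w_\nu\hat{\boldsymbol{\nu}}$, the tangential Stokes theorem applied to the projected part (where $\boldsymbol{\nu}_\partial\cdot\mathbf{P}_T\bw=\boldsymbol{\nu}_\partial\cdot\bw$ because $\boldsymbol{\nu}_\partial$ is tangent to $\Gamma$), and the Leibniz identity $\nabla_\Gamma\cdot(w_\nu\hat{\boldsymbol{\nu}})=(\nabla_\Gamma w_\nu)\cdot\hat{\boldsymbol{\nu}}+w_\nu\,\nabla_\Gamma\cdot\hat{\boldsymbol{\nu}}$ together produce exactly the two terms on the right-hand side. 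The one step worth making explicit is the identification $\nabla_\Gamma\cdot\hat{\boldsymbol{\nu}}=\nabla\cdot\hat{\boldsymbol{\nu}}=\kappa$: in index form $(P_T)_{ij}\partial_j\hat\nu_i=\partial_i\hat\nu_i-\hat\nu_i\hat\nu_j\partial_j\hat\nu_i$, and the second term vanishes because $\hat\nu_i\partial_j\hat\nu_i=\tfrac12\partial_j\|\hat{\boldsymbol{\nu}}\|_2^2=0$, which requires that the extension of the normal have unit length in a whole neighborhood of $\Gamma(t)$ --- true for the normal extension in (\ref{eq: normal extensions}) and consistent with the paper's convention $\kappa=\nabla\cdot\boldsymbol{\nu}$; you state this correctly. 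As you acknowledge, the entire weight of the lemma then rests on the surface divergence theorem for tangential fields, which you still import from the classical Stokes theorem on manifolds (or a tubular-neighborhood limit); your proof therefore reduces the cited result to a more elementary standard one rather than being fully self-contained, but that reduction is complete and sound.
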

Using \ref{eq: identities appendix} and \cref{lem: Buscaglia} we may write the surface tension term as
\begin{align}
 \frac{1}{\mathbb{W}{\rm e}}\displaystyle\int_{\Gamma(t)}  \kappa \boldsymbol{\nu}\cdot \bw ~{\rm d} \Gamma =&~ \frac{1}{\mathbb{W}{\rm e}}\displaystyle \int_{\Gamma(t)}  \nabla \cdot \hat{\bw}~ {\rm d}\Gamma -\frac{1}{\mathbb{W}{\rm e}}\displaystyle \int_{\partial \Gamma(t)} \boldsymbol{\nu}_\partial \cdot \bw~ {\rm d}(\partial \Gamma)\nn\\
                                                                  =&~\frac{1}{\mathbb{W}{\rm e}}\displaystyle \int_{\Gamma(t)}  \mathbf{P}_T:\nabla \bw ~{\rm d}\Gamma-\frac{1}{\mathbb{W}{\rm e}}\displaystyle \int_{\partial \Gamma(t)} \boldsymbol{\nu}_\partial \cdot \bw~ {\rm d}(\partial \Gamma).                                                                  
\end{align}

\subsection{Diffuse-interface level-set model}\label{appendix: surface tension derivation LS}
In the following we utilize index notation.
\begin{prop}\label{prop: id1}
It holds:
  \begin{align}
 \nabla_j \left( (P_T)_{ij}(\phi) \delta_\Gamma(\phi) \right) = -\delta_\Gamma(\phi)\dfrac{\nabla_i\phi}{\|\nabla \phi\|_{\epsilon,2}}\nabla_j\dfrac{\nabla_j\phi}{\|\nabla \phi\|_{\epsilon,2}} + \epsilon^2\dfrac{\nabla_i \delta(\phi)}{\|\nabla \phi\|_{\epsilon,2}}.
  \end{align}
\end{prop}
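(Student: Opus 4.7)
The plan is to prove the identity by a direct index-notation calculation, exploiting the two algebraic features of the regularized norm: the relation $\|\nabla\phi\|_{\epsilon,2}^2=|\nabla\phi|^2+\epsilon^2$ and the differentiation rule $\nabla_i\|\nabla\phi\|_{\epsilon,2}=\nu_j(\phi)\,\nabla_i\nabla_j\phi$, where I abbreviate $N:=\|\nabla\phi\|_{\epsilon,2}$ and $\nu_i:=\nabla_i\phi/N$. Expanding $(P_T)_{ij}\,\delta_\Gamma = \delta(\phi)\,N\,\delta_{ij}-\delta(\phi)\,\nabla_i\phi\,\nabla_j\phi/N$, I would take $\nabla_j$ of the two pieces separately, applying the chain rule to $\delta$ and $N$. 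This produces five terms: a $\delta'\nabla_i\phi\cdot N$ from the first piece, a $\delta\,\nabla_i N$ from the first piece, and three terms from expanding $\nabla_j(\delta(\phi)\nu_i\nabla_j\phi)$ via the product rule.

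Next I would reduce the $\delta'$ contributions. Using $\nabla_j\phi\,\nabla_j\phi=|\nabla\phi|^2=N^2-\epsilon^2$, the $\delta'$ terms combine into $\delta'N^2\nu_i - \delta'(N^2-\epsilon^2)\nu_i = \epsilon^2\,\delta'\,\nu_i$, which is exactly $\epsilon^2\nabla_i\delta(\phi)/N$. This already isolates the desired $\epsilon^2$-regularization remainder in the claim.

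The remaining task is to show that the surviving $\delta$-terms collapse to $-\delta(\phi)\,N\,\nu_i\,\nabla_j\nu_j$. Using $\nabla_i N=\nu_j\nabla_i\nabla_j\phi$ and the identity $N\,\nu_j\nabla_j\nu_i=\nabla_i N-\nu_i\,\nu_j\nabla_j N$ (which follows by expanding $\nabla_j(\nabla_i\phi/N)$ and contracting with $\nabla_j\phi$), one gets
\begin{align*}
\delta\,\nabla_i N-\delta\,N\nu_j\nabla_j\nu_i-\delta\,\nu_i\Delta\phi
 = \delta\,\nu_i\nu_j\nabla_j N-\delta\,\nu_i\Delta\phi
 = -\delta\,N\,\nu_i\,\nabla_j\nu_j,
\end{align*}
where the final step uses $N\nabla_j\nu_j=\Delta\phi-\nu_j\nabla_j N$, i.e.\ the standard divergence of $\nabla\phi/N$. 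Putting the $\delta'$ and $\delta$ parts together gives the claimed identity, and passing from index form back to intrinsic notation with $\delta_\Gamma=\delta\,N$ completes the argument.

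The only real obstacle is bookkeeping: one must carefully track which factor the operator $\nabla_j$ hits in $\delta(\phi)\nu_i\nabla_j\phi$ and avoid double counting when re-expressing $\nu_j\nabla_j\nu_i$. The $\epsilon^2$-term is what makes the statement nontrivial; in the sharp case ($\epsilon=0$) it would vanish because $|\nabla\phi|^2=N^2$ makes the $\delta'$-cancellation exact, so the calculation must be performed keeping $\epsilon^2$ as a bookkeeping device from the outset.
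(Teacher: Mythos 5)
Your proof is correct and is essentially the same direct index-notation computation as the paper's: both hinge on the product/chain rule together with the observation that $\nabla_j\phi\,\nabla_j\phi=\|\nabla\phi\|_{\epsilon,2}^2-\epsilon^2$ (equivalently $(P_T)_{ij}\nabla_j\phi=\epsilon^2\nabla_i\phi/\|\nabla\phi\|_{\epsilon,2}^2$), which is what produces the $\epsilon^2$ remainder. The only difference is bookkeeping: the paper splits the Leibniz expansion as $(P_T)_{ij}\nabla_j\delta_\Gamma+\delta_\Gamma\nabla_j(P_T)_{ij}$ and cancels the cross terms at the end, whereas you group by whether $\delta$ or $\delta'$ survives, which isolates the $\epsilon^2$ term slightly earlier.
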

\begin{proof}
We compute
\begin{align}\label{eq: compute1}
  (P_T)_{ij}(\phi) \nabla_j \delta_\Gamma(\phi) =&~ \left(I_{ij}-\dfrac{\nabla_i\phi}{\|\nabla \phi\|_{\epsilon,2}}\dfrac{\nabla_j\phi}{\|\nabla \phi\|_{\epsilon,2}}\right)\left(\delta(\phi)\dfrac{\nabla_k\phi}{\|\nabla \phi\|_{\epsilon,2}}\nabla_j\nabla_k\phi + \|\nabla \phi\|_{\epsilon,2}\nabla_j\delta(\phi)\right)\nn\\
  =&~ \delta(\phi)\dfrac{\nabla_k\phi}{\|\nabla \phi\|_{\epsilon,2}}\nabla_i\nabla_k\phi + \|\nabla \phi\|_{\epsilon,2}\nabla_i\delta(\phi)\nn\\
  &~-\dfrac{\nabla_i\phi}{\|\nabla \phi\|_{\epsilon,2}}\dfrac{\nabla_j\phi}{\|\nabla \phi\|_{\epsilon,2}}\delta(\phi)\dfrac{\nabla_k\phi}{\|\nabla \phi\|_{\epsilon,2}}\nabla_j\nabla_k\phi-\dfrac{\nabla_i\phi}{\|\nabla \phi\|_{\epsilon,2}}\dfrac{\nabla_j\phi}{\|\nabla \phi\|_{\epsilon,2}} \|\nabla \phi\|_{\epsilon,2}\nabla_j\delta(\phi)\nn\\
  =&~ \delta_\Gamma(\phi)\dfrac{\nabla_k\phi}{\|\nabla \phi\|_{\epsilon,2}}\left(\dfrac{\nabla_i\nabla_k\phi}{\|\nabla\phi\|_{\epsilon,2}} -\dfrac{\nabla_i\phi}{\|\nabla \phi\|_{\epsilon,2}}\dfrac{\nabla_j\phi}{\|\nabla \phi\|_{\epsilon,2}}\dfrac{\nabla_j\nabla_k\phi}{\|\nabla \phi\|_{\epsilon,2}}\right)+ \epsilon^2\dfrac{\nabla_i \delta(\phi)}{\|\nabla \phi\|_{\epsilon,2}}\nn\\
  =&~ \delta_\Gamma(\phi)\dfrac{\nabla_k\phi}{\|\nabla \phi\|_{\epsilon,2}}(P_T)_{ij}\dfrac{\nabla_j\nabla_k\phi}{\|\nabla \phi\|_{\epsilon,2}} + \epsilon^2\dfrac{\nabla_i \delta(\phi)}{\|\nabla \phi\|_{\epsilon,2}}.
\end{align}
The second to last equality follows from expanding the gradient of the Dirac delta.
On the other hand we have:
\begin{align}\label{eq: compute2}
  \delta_\Gamma(\phi)\nabla_j(P_T)_{ij}(\phi)   =&~ -\delta_\Gamma(\phi)\dfrac{\nabla_i\phi}{\|\nabla \phi\|_{\epsilon,2}}\nabla_j\dfrac{\nabla_j\phi}{\|\nabla \phi\|_{\epsilon,2}}  - \delta_\Gamma(\phi)\dfrac{\nabla_j\phi}{\|\nabla \phi\|_{\epsilon,2}}\nabla_j\dfrac{\nabla_i\phi}{\|\nabla \phi\|_{\epsilon,2}} \nn\\
  =&~ -\delta_\Gamma(\phi)\dfrac{\nabla_i\phi}{\|\nabla \phi\|_{\epsilon,2}}\nabla_j\dfrac{\nabla_j\phi}{\|\nabla \phi\|_{\epsilon,2}}  \nn\\
  &~- \delta_\Gamma(\phi)\dfrac{\nabla_j\phi}{\|\nabla \phi\|_{\epsilon,2}}\left(\dfrac{\nabla_j\nabla_i\phi}{\|\nabla \phi\|_{\epsilon,2}} - \dfrac{\nabla_i\phi}{\|\nabla \phi\|_{\epsilon,2}}\dfrac{\nabla_k\phi}{\|\nabla \phi\|_{\epsilon,2}}\dfrac{\nabla_j\nabla_k\phi}{\|\nabla \phi\|_{\epsilon,2}}\right)\nn\\
  =&~ -\delta_\Gamma(\phi)\dfrac{\nabla_i\phi}{\|\nabla \phi\|_{\epsilon,2}}\nabla_j\dfrac{\nabla_j\phi}{\|\nabla \phi\|_{\epsilon,2}} - \delta_\Gamma(\phi)\dfrac{\nabla_j\phi}{\|\nabla \phi\|_{\epsilon,2}}(P_T)_{ik}\dfrac{\nabla_k\nabla_j\phi}{\|\nabla \phi\|_{\epsilon,2}}.
\end{align}
Addition of (\ref{eq: compute1}) and (\ref{eq: compute2}) yields:
\begin{align}\label{eq: compute3}
  \nabla_j \left( (P_T)_{ij}(\phi) \delta_\Gamma(\phi) \right) =&~ (P_T)_{ij}(\phi) \nabla_j \delta_\Gamma(\phi) + \delta_\Gamma(\phi)\nabla_j(P_T)_{ij}(\phi)  \nn\\
  =&~-\delta_\Gamma(\phi)\dfrac{\nabla_i\phi}{\|\nabla \phi\|_{\epsilon,2}}\nabla_j\dfrac{\nabla_j\phi}{\|\nabla \phi\|_{\epsilon,2}}+ \epsilon^2\dfrac{\nabla_i \delta(\phi)}{\|\nabla \phi\|_{\epsilon,2}}.
\end{align}
\end{proof}
\begin{lem} It holds:
  \begin{align}
 \frac{1}{\mathbb{W}{\rm e}}\displaystyle\int_\Omega  \delta_\Gamma(\phi) \nabla_j w_i (P_T)_{ij}(\phi)~{\rm d}\Omega =&~ \frac{1}{\mathbb{W}{\rm e}} \displaystyle\int_\Omega  \delta_\Gamma(\phi) \dfrac{\nabla_i \phi}{\|\nabla \phi \|_{\epsilon,2}}  \nabla_j \dfrac{\nabla_j \phi}{\|\nabla \phi \|_{\epsilon,2}} w_i ~{\rm d}\Omega \nn\\
 &~- \frac{1}{\mathbb{W}{\rm e}}\displaystyle\int_\Omega \epsilon^2\dfrac{\nabla_i \delta(\phi)}{\|\nabla \phi\|_{\epsilon,2}}w_i~{\rm d}\Omega.
  \end{align}
\end{lem}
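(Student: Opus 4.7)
The plan is to apply integration by parts to transfer the derivative off $w_i$ and onto the composite object $\delta_\Gamma(\phi)(P_T)_{ij}(\phi)$, and then invoke the identity just established in the preceding Proposition to rewrite that divergence in the desired form.

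First I would write
\begin{align}
\frac{1}{\mathbb{W}{\rm e}}\displaystyle\int_\Omega \delta_\Gamma(\phi)\,\nabla_j w_i\,(P_T)_{ij}(\phi)~{\rm d}\Omega
 = -\frac{1}{\mathbb{W}{\rm e}}\displaystyle\int_\Omega w_i\,\nabla_j\!\left((P_T)_{ij}(\phi)\delta_\Gamma(\phi)\right)~{\rm d}\Omega + \text{bnd},
\end{align}
where the boundary term $\text{bnd}=\frac{1}{\mathbb{W}{\rm e}}\int_{\partial\Omega} \delta_\Gamma(\phi)(P_T)_{ij}(\phi)w_i n_j~{\rm d}S$ vanishes under the standing assumption that line-force contributions are discarded by virtue of appropriate boundary or auxiliary conditions (as introduced earlier in \cref{sec: Standard continuous formulations,subsec: reg}).

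Next I would substitute the identity of \cref{prop: id1},
\begin{align}
\nabla_j\!\left((P_T)_{ij}(\phi)\delta_\Gamma(\phi)\right)
= -\delta_\Gamma(\phi)\dfrac{\nabla_i\phi}{\|\nabla\phi\|_{\epsilon,2}}\,\nabla_j\!\dfrac{\nabla_j\phi}{\|\nabla\phi\|_{\epsilon,2}}
+\epsilon^2\dfrac{\nabla_i\delta(\phi)}{\|\nabla\phi\|_{\epsilon,2}},
\end{align}
into the right-hand side of the integrated-by-parts expression. The first term on the right produces, up to the common sign flip, exactly the first term on the right-hand side of the claim, and the $\epsilon^2$ contribution yields the second term. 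Collecting the two pieces completes the proof.

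The only nontrivial step is the boundary-term handling in the integration by parts; everything else is algebraic substitution from \cref{prop: id1}. Since the convention adopted throughout the paper is to suppress such boundary/line terms (as made explicit in \cref{subsec: standard weak}), this does not pose a genuine obstacle, and the identity follows essentially in one line once the preceding Proposition is in hand.
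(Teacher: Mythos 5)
Your proposal is correct and follows exactly the paper's own argument: integrate by parts, discard the boundary term under the standing assumption that line-force contributions vanish, and substitute the identity of \cref{prop: id1}; the signs work out as you describe. Nothing is missing.
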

\begin{proof}
Performing integration by parts we get:
\begin{align}\label{eq: compute0}
 \frac{1}{\mathbb{W}{\rm e}}\displaystyle\int_\Omega  \delta_\Gamma(\phi) \nabla_j w_i (P_T)_{ij}(\phi)~{\rm d}\Omega =&~ - \frac{1}{\mathbb{W}{\rm e}}\displaystyle\int_\Omega  \nabla_j \left(\delta_\Gamma(\phi)  (P_T)_{ij}(\phi)\right) w_i~{\rm d}\Omega\nn\\
 &~+ \frac{1}{\mathbb{W}{\rm e}}\displaystyle\int_{\partial \Omega}  \delta_\Gamma(\phi) n_j w_i (P_T)_{ij}(\phi)~{\rm d S}.
\end{align}
Under the standing assumption we suppress the line force term. Using \cref{prop: id1} finalizes the proof.
\end{proof}

\section{Energy evolution midpoint level-set discretization}\label{appendix: Standard mid-point Galerkin discretizations}
We provide the energy evolution of a standard time-discrete level-set method using the midpoint rule. We consider the conservative discretization, which reads for time-step $n$:\\

\noindent \textit{Given $\bu_n, p_n$ and $\phi_n$, find $\bu_{n+1}, p_{n+1}$ and $\phi_{n+1}$ such that:}

\begin{subequations}\label{weak form gen alpha conservative}
\label{eq:weak}
\begin{align}
 \dfrac{[\![\rho\bu]\!]_{n+1/2}}{\Delta t_n} +\nabla \cdot (\rho_{n+1/2} \bu_{n+1/2}\otimes \bu_{n+1/2}) + \nabla p_{n+1} -\nabla \cdot \boldsymbol{\tau}(\bu_{n+1/2},\phi_{n+1/2}) &\nn\\
 + \frac{1}{\mathbb{W}{\rm e}} \kappa(\phi_{n+1/2}) \boldsymbol{\nu}(\phi_{n+1/2}) \delta_\Gamma(\phi_{n+1/2})+\frac{1}{\mathbb{F}{\rm r}^2} \rho_{n+1/2} \boldsymbol{\jmath}&~=0, \label{weak form 1 mom time}\\
 \nabla \cdot \bu_{n+1/2} &~= 0, \label{weak form 1 cont time} \\
 \dfrac{[\![\phi]\!]_{n+1/2}}{\Delta t_n}+ \bu_{n+1/2} \cdot \nabla  \phi_{n+1/2} &~= 0, \label{weak form 1 mass time}
\end{align}
\end{subequations}
\textit{where $\rho \equiv \rho(\phi)$ on the indicated time-level.}

\begin{thm}
The time-discrete formulation (\ref{weak form gen alpha conservative}) satisfies the energy evolution property:
\begin{subequations}\label{eq: time-discrete evolution}
 \begin{align}
 \dfrac{[\![\mathscr{E}\left(\bu, \phi\right)]\!]_{n+1/2}}{\Delta t_n}  =&~ - \int_\Omega \nabla \bu_{n+1/2}: \boldsymbol{\tau}(\bu_{n+1/2},\phi_{n+1/2})~ {\rm d}\Omega + \afunc{error}\\
 \afunc{error} =&~ \Delta t_n^2\displaystyle\int_\Omega \tfrac{1}{8} \left\|\dfrac{[\![\bu]\!]_{n+1/2}}{\Delta t_n}\right\|^2 \dfrac{[\![\![\rho]\!]\!]_{n+1/2}}{\Delta t_n}~{\rm d} \Omega \nn\\
  &~-\frac{1}{\mathbb{W}{\rm e}\Delta t_n}\displaystyle\int_\Omega [\![\delta(\phi)]\!]_{n+1/2}\left(\|\nabla \phi_{n+1/2}\|_{\epsilon,2} - (\|\nabla \phi\|_{\epsilon,2})_{n+1/2} \right)~{\rm d}\Omega\nn\\
 &~-\frac{1}{\mathbb{W}{\rm e}\Delta t_n}\displaystyle\int_\Omega [\![\|\nabla\phi\|_{\epsilon,2}]\!]_{n+1/2} \left( \delta(\phi_{n+1/2})\dfrac{\|\nabla \phi\|_{n+1/2}}{\|\nabla \phi_{n+1/2}\|_{\epsilon,2}}-\delta(\phi)_{n+1/2}\right) ~{\rm d}\Omega \nn\\
 &~+ \frac{1}{\mathbb{W}{\rm e}\Delta t_n}\displaystyle\int_\Omega [\![\phi]\!]_{n+1/2}^3\left( \delta^{(3)}(\phi_{n+1/2})/24 + [\![\phi]\!]_{n+1/2}^2\delta^{(5)}(\phi_{n+\xi})/1920\right)\|\nabla \phi_{n+1/2}\|_{\epsilon,2} ~{\rm d}\Omega \nn\\
 &~+ \displaystyle\int_\Omega\frac{1}{\mathbb{W}{\rm e}\Delta t_n} \delta'(\phi_{n+1/2})[\![\phi]\!]_{n+1/2}\dfrac{\epsilon^2}{\|\nabla \phi_{n+1/2}\|_{\epsilon,2}} ~{\rm d}\Omega. \label{eq: error def}
\end{align}
\end{subequations}
\end{thm}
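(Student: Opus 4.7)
The plan is to mirror the semi-discrete energy proof of \cref{thm: energy dissipation modified model semi}, but carefully account for the fact that the midpoint rule fails to reproduce several of the product/chain rules used there. I would test (\ref{weak form 1 mom time}) with $\bu_{n+1/2}$, (\ref{weak form 1 cont time}) with $p_{n+1}$ (which cancels the pressure term because $\nabla\cdot\bu_{n+1/2}\equiv 0$), and (\ref{weak form 1 mass time}) with a suitable weight chosen so that its contribution matches the surface tension coupling. Summing these equations and using \cref{prop: product-rule midpoint} together with the pointwise identities (\ref{eq: props cont}) (which survive intact at the midpoint since they rely only on the divergence-free property) brings the balance into a form structurally identical to the semi-discrete case, up to three kinds of defect terms that together constitute $\afunc{error}$.

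For the kinetic and gravitational contributions, applying \cref{prop: product-rule midpoint} twice to $\bu_{n+1/2}\cdot[\![\rho\bu]\!]_{n+1/2}$ and comparing with $\tfrac12 [\![\rho\|\bu\|^2]\!]_{n+1/2}$ yields the residual $\tfrac14[\![\rho]\!]_{n+1/2}\|[\![\bu]\!]_{n+1/2}\|^2$, which after rearranging with the appropriate factors of $\Delta t_n$ provides the first error line in (\ref{eq: error def}); the gravitational contribution is exact because $y$ is time-independent so the product rule closes without remainder. The viscous term emerges, without error, in the standard form $-(\nabla\bu_{n+1/2},\boldsymbol{\tau}(\bu_{n+1/2},\phi_{n+1/2}))_\Omega$.

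The main obstacle is the surface-tension contribution. Testing the level-set equation produces $\delta_\Gamma(\phi_{n+1/2})\boldsymbol{\nu}(\phi_{n+1/2})\cdot\bu_{n+1/2}$ which ought to equal $[\![\delta_\Gamma(\phi)]\!]_{n+1/2}/\Delta t_n$, but expanding $[\![\delta(\phi)\|\nabla\phi\|_{\epsilon,2}]\!]_{n+1/2}$ via \cref{prop: product-rule midpoint} yields $[\![\delta(\phi)]\!]_{n+1/2}(\|\nabla\phi\|_{\epsilon,2})_{n+1/2} + \delta(\phi)_{n+1/2}[\![\|\nabla\phi\|_{\epsilon,2}]\!]_{n+1/2}$, while what actually appears from testing involves $\delta(\phi_{n+1/2})$ and $\|\nabla\phi_{n+1/2}\|_{\epsilon,2}$ rather than the midpoint averages. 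The differences $(\|\nabla\phi\|_{\epsilon,2})_{n+1/2}-\|\nabla\phi_{n+1/2}\|_{\epsilon,2}$ and $\delta(\phi_{n+1/2})\nabla\phi_{n+1/2}/\|\nabla\phi_{n+1/2}\|_{\epsilon,2}-\delta(\phi)_{n+1/2}$ produce the second and third error lines. Expanding $\delta(\phi)_{n+1/2}$ via a Taylor expansion around $\phi_{n+1/2}$ with Lagrange remainder at some $\phi_{n+\xi}$ gives the fourth line involving $\delta^{(3)}$ and $\delta^{(5)}$, and the regularization discrepancy arising from the $\epsilon^2$ term in the definition of $v$ (cf.~(\ref{eq: extra var})) contributes the final line. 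The hard part is the bookkeeping: each of the four mismatches must be isolated, identified with the correct sign, and expressed exactly in the form stated in (\ref{eq: error def}), taking care that the Taylor remainders land on the right intermediate points.
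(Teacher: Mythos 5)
Your proposal follows essentially the same route as the paper's proof: weight the momentum equation with $\bu_{n+1/2}$, the continuity and level-set equations with the pressure/kinetic/gravitational/curvature weights, exploit the pointwise divergence-free property, and collect the midpoint product-rule mismatches and the Taylor remainder of $[\![\delta(\phi)]\!]_{n+1/2}$ into $\afunc{error}$. Two small corrections to your bookkeeping: the kinetic defect is $-\tfrac{1}{8}[\![\rho]\!]_{n+1/2}\|[\![\bu]\!]_{n+1/2}\|^2$ (not $\tfrac{1}{4}$), arising from combining the momentum remainder $\tfrac{1}{2}[\![\rho]\!]_{n+1/2}\,\bu_n\cdot\bu_{n+1}$ with the level-set weight contribution $\tfrac{1}{2}\|\bu_{n+1/2}\|^2[\![\rho]\!]_{n+1/2}$; and the final $\epsilon^2$ line of (\ref{eq: error def}) comes from integrating the curvature term by parts and using $\nabla\phi\cdot\nabla\phi=\|\nabla\phi\|_{\epsilon,2}^2-\epsilon^2$, not from the auxiliary variable $v$, which does not appear in formulation (\ref{weak form gen alpha conservative}).
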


\begin{rmk} 
The semi-discrete convective method has the same energy evolution (\ref{eq: time-discrete evolution}). For completeness we provide the convective method:\\

\noindent \textit{Given $\bu_n, p_n$ and $\phi_n$, find $\bu_{n+1}, p_{n+1}$ and $\phi_{n+1}$ such that:}

\begin{subequations}\label{weak form gen alpha convective}
\label{eq:weak}
\begin{align}
 \rho_{n+1/2}\left(\dfrac{[\![\bu]\!]_{n+1/2}}{\Delta t_n} + \bu_{n+1/2} \cdot \nabla \bu_{n+1/2}\right) + \nabla p_{n+1} -\nabla \cdot \boldsymbol{\tau}(\bu_{n+1/2},\phi_{n+1/2})&\nn\\
 + \frac{1}{\mathbb{W}{\rm e}} \kappa(\phi_{n+1/2}) \boldsymbol{\nu}(\phi_{n+1/2})\delta_\Gamma(\phi_{n+1/2})+\frac{1}{\mathbb{F}{\rm r}^2} \rho_{n+1/2} \boldsymbol{\jmath}&~=0, \label{weak form 1 mom time}\\
 \nabla \cdot \bu_{n+1/2} &~= 0, \label{weak form 1 cont time} \\
 \dfrac{[\![\phi]\!]_{n+1/2}}{\Delta t_n}+ \bu_{n+1/2} \cdot \nabla  \phi_{n+1/2} &~= 0, \label{weak form 1 mass time}
\end{align}
\end{subequations}
\textit{where $\rho \equiv \rho(\phi)$ on the indicated time-level.}
\end{rmk}

\begin{proof}
We give the proof for the conservative formulation, that of the convective formulation follows analogously. Multiplication of the continuity equation by $q = p_{n+1} - \rho_{n+1/2}(\tfrac{1}{2}\bu_{n+1/2}\cdot\bu_{n+1/2}-\frac{1}{\mathbb{F}{\rm r}^2} y)$ and the level-set equation by $-([\![\![\rho]\!]\!]\tfrac{1}{2}\bu_{n+1/2}\cdot\bu_{n+1/2}-\dfrac{1}{\mathbb{F}{\rm r}^2}[\![\![\rho]\!]\!] y + \frac{1}{\mathbb{W}{\rm e}}\kappa(\phi_{n+1/2})) \delta(\phi_{n+1/2})$ and subsequently integrating yields:
\begin{subequations}\label{eq: cont proof weights 1 time discrete}
\begin{align}
  \displaystyle\int_\Omega (p_{n+1}- \rho_{n+1/2} (\tfrac{1}{2}\bu_{n+1/2}\cdot\bu_{n+1/2}-\frac{1}{\mathbb{F}{\rm r}^2} y)) \nabla \cdot \bu_{n+1/2} ~{\rm d} \Omega &~= 0,\\
   -\displaystyle\int_\Omega \left(\tfrac{1}{2}\bu_{n+1/2}\cdot\bu_{n+1/2}-\frac{1}{\mathbb{F}{\rm r}^2} y\right) \left(\dfrac{[\![\![\rho]\!]\!]_{n+1/2}}{\Delta t_n}+ \bu_{n+1/2} \cdot \nabla  \rho_{n+1/2}\right)~{\rm d} \Omega &\nn\\
    - \displaystyle\int_\Omega\left( \frac{1}{\mathbb{W}{\rm e}} \kappa(\phi_{n+1/2}) \delta(\phi_{n+1/2})\right)\left( \dfrac{[\![\phi]\!]_{n+1/2}}{\Delta t_n}+ \bu_{n+1/2} \cdot \nabla  \phi_{n+1/2} \right) ~{\rm d}\Omega&~= 0.
\end{align}
\end{subequations}
We add the equations (\ref{eq: cont proof weights 1 time discrete}) and find:
\begin{align}\label{eq: cont combining time}
   -\displaystyle\int_\Omega \left(\tfrac{1}{2}\bu_{n+1/2}\cdot\bu_{n+1/2} - \frac{1}{\mathbb{F}{\rm r}^2}y\right) \dfrac{[\![\![\rho]\!]\!]_{n+1/2}}{\Delta t_n}~{\rm d} \Omega ~~~&\nn\\
   - \displaystyle\int_\Omega\frac{1}{\mathbb{W}{\rm e}} \kappa(\phi_{n+1/2}) \delta(\phi_{n+1/2})\dfrac{[\![\phi]\!]_{n+1/2}}{\Delta t_n}~{\rm d}\Omega =&~- \displaystyle\int_\Omega (p_{n+1}-\rho_{n+1/2}\tfrac{1}{2}\bu_{n+1/2}\cdot\bu_{n+1/2}) \nabla \cdot \bu_{n+1/2} ~{\rm d} \Omega\nn\\
   &~+\displaystyle\int_\Omega \tfrac{1}{2}\bu_{n+1/2}\cdot\bu_{n+1/2} (\bu_{n+1/2} \cdot \nabla  \rho_{n+1/2})~{\rm d} \Omega \nn\\
   &~-\displaystyle\int_\Omega \frac{1}{\mathbb{F}{\rm r}^2}y (\bu_{n+1/2} \cdot \nabla  \rho_{n+1/2} + \rho_{n+1/2} \nabla \cdot \bu_{n+1/2})~{\rm d} \Omega\nn\\
   &~+ \displaystyle\int_\Omega \frac{1}{\mathbb{W}{\rm e}} \kappa(\phi_{n+1/2}) \delta(\phi_{n+1/2})\bu_{n+1/2} \cdot \nabla  \phi_{n+1/2}  ~{\rm d}\Omega.
\end{align}
We take the second term on the left-hand side of (\ref{eq: cont combining time}) in isolation and perform integration by parts to get:
\begin{align}\label{eq: identify RHS0}
 - \displaystyle\int_\Omega\frac{1}{\mathbb{W}{\rm e}} \kappa(\phi_{n+1/2}) \delta(\phi_{n+1/2})\dfrac{[\![\phi]\!]_{n+1/2}}{\Delta t_n}~{\rm d}\Omega =&~  \displaystyle\int_\Omega\frac{1}{\mathbb{W}{\rm e}} \nabla \left(\delta(\phi_{n+1/2})\dfrac{[\![\phi]\!]_{n+1/2}}{\Delta t_n} \right)\dfrac{\nabla \phi_{n+1/2}}{\|\nabla \phi_{n+1/2}\|_{\epsilon,2}} ~{\rm d}\Omega\nn\\
 =&~  \displaystyle\int_\Omega\frac{1}{\mathbb{W}{\rm e}\Delta t_n} \delta(\phi_{n+1/2})\nabla [\![\phi]\!]_{n+1/2} \cdot\dfrac{\nabla \phi_{n+1/2}}{\|\nabla \phi_{n+1/2}\|_{\epsilon,2}} ~{\rm d}\Omega \nn\\
 &~+ \displaystyle\int_\Omega\frac{1}{\mathbb{W}{\rm e}\Delta t_n} \delta'(\phi_{n+1/2})[\![\phi]\!]_{n+1/2}\|\nabla \phi_{n+1/2}\|_{\epsilon,2} ~{\rm d}\Omega \nn\\
  &~- \displaystyle\int_\Omega\frac{1}{\mathbb{W}{\rm e}\Delta t_n} \delta'(\phi_{n+1/2})[\![\phi]\!]_{n+1/2}\dfrac{\epsilon^2}{\|\nabla \phi_{n+1/2}\|_{\epsilon,2}} ~{\rm d}\Omega.
\end{align}
For the first term on the right-hand side we use
\begin{align}\label{eq: identify RHS1}
  \nabla [\![\phi]\!]_{n+1/2} \cdot\dfrac{\nabla \phi_{n+1/2}}{\|\nabla \phi_{n+1/2}\|_{\epsilon,2}} = [\![\|\nabla\phi\|_{\epsilon,2}]\!]_{n+1/2} \dfrac{\left(\|\nabla \phi\|_{\epsilon,2}\right)_{n+1/2}}{\|\nabla \phi_{n+1/2}\|_{\epsilon,2}},
\end{align}
while for the second term employ a truncated Taylor series in the form:
\begin{align}\label{eq: identify RHS2}
 [\![\delta(\phi)]\!]_{n+1/2} =&~[\![\phi]\!]_{n+1/2}\delta^{(1)}(\phi_{n+1/2}) + [\![\phi]\!]_{n+1/2}^3\delta^{(3)}(\phi_{n+1/2})/24 + [\![\phi]\!]_{n+1/2}^5\delta^{(5)}(\phi_{n+\xi})/1920,
\end{align}
for some $\xi \in (0,1)$.
Substitution of (\ref{eq: identify RHS1})-(\ref{eq: identify RHS2}) into (\ref{eq: identify RHS0}) and reorganizing gives:
\begin{align}\label{eq: identify RHS3}
 -& \displaystyle\int_\Omega\frac{1}{\mathbb{W}{\rm e}} \kappa(\phi_{n+1/2}) \delta(\phi_{n+1/2})\dfrac{[\![\phi]\!]_{n+1/2}}{\Delta t_n}~{\rm d}\Omega \nn\\
 =&~\frac{1}{\mathbb{W}{\rm e}\Delta t_n}\displaystyle\int_\Omega \delta(\phi)_{n+1/2}[\![\|\nabla\phi\|_{\epsilon,2}]\!]_{n+1/2} + [\![\delta(\phi)]\!]_{n+1/2} (\|\nabla \phi\|_{\epsilon,2})_{n+1/2} ~{\rm d}\Omega \nn\\
 &~+\frac{1}{\mathbb{W}{\rm e}\Delta t_n}\displaystyle\int_\Omega [\![\delta(\phi)]\!]_{n+1/2}\left(\|\nabla \phi_{n+1/2}\|_{\epsilon,2} - (\|\nabla \phi\|_{\epsilon,2})_{n+1/2} \right)~{\rm d}\Omega\nn\\
 &~+\frac{1}{\mathbb{W}{\rm e}\Delta t_n}\displaystyle\int_\Omega [\![\|\nabla\phi\|_{\epsilon,2}]\!]_{n+1/2} \left( \delta(\phi_{n+1/2})\dfrac{\|\nabla \phi\|_{n+1/2}}{\|\nabla \phi_{n+1/2}\|_{\epsilon,2}}-\delta(\phi)_{n+1/2}\right) ~{\rm d}\Omega \nn\\
 &~- \frac{1}{\mathbb{W}{\rm e}\Delta t_n}\displaystyle\int_\Omega [\![\phi]\!]_{n+1/2}^3\left( \delta^{(3)}(\phi_{n+1/2})/24 + [\![\phi]\!]_{n+1/2}^2\delta^{(5)}(\phi_{n+\xi})/1920\right)\|\nabla \phi_{n+1/2}\|_{\epsilon,2} ~{\rm d}\Omega\nn\\
   &~- \displaystyle\int_\Omega\frac{1}{\mathbb{W}{\rm e}\Delta t_n} \delta'(\phi_{n+1/2})[\![\phi]\!]_{n+1/2}\dfrac{\epsilon^2}{\|\nabla \phi_{n+1/2}\|_{\epsilon,2}} ~{\rm d}\Omega,
\end{align}
where the first term on the right-hand side represents the temporal change of surface energy (see \cref{prop: product-rule midpoint}):
\begin{align}\label{eq: identify RHS4}
 \frac{1}{\mathbb{W}{\rm e}}\displaystyle\int_\Omega \dfrac{[\![\delta_\Gamma(\phi)]\!]_{n+1/2}}{\Delta t_n}  ~{\rm d}\Omega =&~ 
 \frac{1}{\mathbb{W}{\rm e}\Delta t_n}\displaystyle\int_\Omega \delta(\phi)_{n+1/2}[\![\|\nabla\phi\|_{\epsilon,2}]\!]_{n+1/2}  + [\![\delta(\phi)]\!]_{n+1/2} (\|\nabla \phi\|_{\epsilon,2})_{n+1/2} ~{\rm d}\Omega.
\end{align}
Next we multiply the momentum equation by $\bu_{n+1/2}$  and subsequently integrate to get:
\begin{align}\label{eq: cont combining 2 time}
  &\displaystyle\int_\Omega\bu_{n+1/2}^T  \dfrac{[\![\rho\bu]\!]_{n+1/2}}{\Delta t_n}~{\rm d}\Omega + \displaystyle\int_\Omega \bu_{n+1/2} \nabla \cdot (\rho_{n+1/2} \bu_{n+1/2}\otimes \bu_{n+1/2})~{\rm d}\Omega + \displaystyle\int_\Omega\bu_{n+1/2}\nabla p_{n+1}~{\rm d}\Omega \nn\\
  &+ \displaystyle\int_\Omega\bu_{n+1/2} \nabla \cdot \boldsymbol{\tau}(\bu_{n+1/2},\phi_{n+1/2})~{\rm d}\Omega + \displaystyle\int_\Omega\bu_{n+1/2} \rho_{n+1/2} \frac{1}{\mathbb{F}{\rm r}^2}\boldsymbol{\jmath} ~{\rm d}\Omega \nn\\
  &+ \displaystyle\int_\Omega\frac{1}{\mathbb{W}{\rm e}} \kappa(\phi_{n+1/2}) \bu_{n+1/2}\cdot \boldsymbol{\nu}(\phi_{n+1/2})\delta_\Gamma(\phi_{n+1/2})  ~{\rm d}\Omega= 0
\end{align}
The time-derivative term may be written as
\begin{align}\label{eq:temp_term}
 \displaystyle\int_\Omega \bu_{n+1/2} \cdot \dfrac{[\![\rho\bu]\!]_{n+1/2}}{\Delta t_n} ~{\rm d}\Omega 
= &  ~\Delta t_n^{-1} \displaystyle\int_\Omega \tfrac{1}{2}  \rho_{n+1} \|\bu_{n+1/2}\|^2 -\tfrac{1}{2}  \rho_{n} \|\bu_{n}\|^2~{\rm d}\Omega \nn\\
  & + \Delta t_n^{-1} \displaystyle\int_\Omega \tfrac{1}{2}   (\rho_{n+1}-\rho_n) \bu_{n} \cdot\bu_{n+1}  ~{\rm d}\Omega.
\end{align}
Expanding the divergence operator in the convective term gives:
\begin{align} \label{eq:conv_term}
  \displaystyle\int_\Omega  \bu_{n+1/2} \nabla \cdot( \rho_{n+1/2} \bu_{n+1/2}\otimes \bu_{n+1/2})~{\rm d}\Omega 
								       =& \displaystyle\int_\Omega\tfrac{1}{2} \|\bu_{n+1/2}\|^2 \bu_{n+1/2} \cdot \nabla \rho_{n+1/2}  ~{\rm d}\Omega\nn\\
								       &+  \displaystyle\int_\Omega  \tfrac{1}{2} \|\bu_{n+1/2}\|^2 \rho_{n+1/2} \nabla \cdot  \bu_{n+1/2} ~{\rm d}\Omega.
\end{align}
Substitution of (\ref{eq:temp_term})-(\ref{eq:conv_term}) into (\ref{eq: cont combining 2 time}) and performing integration by parts gives:
\begin{align}\label{eq: cont combining 3 time}
  \Delta t_n^{-1} \displaystyle\int_\Omega \tfrac{1}{2}  \rho_{n+1} \|\bu_{n+1/2}\|^2 -\tfrac{1}{2}  \rho_{n} \|\bu_{n}\|^2~{\rm d}\Omega =&~ -\displaystyle\int_\Omega\tfrac{1}{2} \|\bu_{n+1/2}\|^2 \bu_{n+1/2} \cdot \nabla \rho_{n+1/2}  ~{\rm d}\Omega \nn\\
  &~-  \displaystyle\int_\Omega  \tfrac{1}{2} \|\bu_{n+1/2}\|^2 \rho_{n+1/2} \nabla \cdot  \bu_{n+1/2} ~{\rm d}\Omega \nn\\
								       &~- \displaystyle\int_\Omega\bu_{n+1/2}\nabla p_{n+1}~{\rm d}\Omega - \displaystyle\int_\Omega\bu_{n+1/2} \rho_{n+1/2} \frac{1}{\mathbb{F}{\rm r}^2}\boldsymbol{\jmath} ~{\rm d}\Omega \nn\\
  &~+ \displaystyle\int_\Omega\nabla\bu_{n+1/2} : \boldsymbol{\tau}(\bu_{n+1/2},\phi_{n+1/2})~{\rm d}\Omega \nn\\
  &~- \displaystyle\int_\Omega\frac{1}{\mathbb{W}{\rm e}} \kappa(\phi_{n+1/2}) \bu_{n+1/2}\cdot \boldsymbol{\nu}(\phi_{n+1/2})\delta_\Gamma(\phi_{n+1/2})  ~{\rm d}\Omega \nn\\
  &~ - \Delta t_n^{-1} \displaystyle\int_\Omega \tfrac{1}{2}   (\rho_{n+1}-\rho_n) \bu_{n} \cdot\bu_{n+1}  ~{\rm d}\Omega.
\end{align}
Addition of (\ref{eq: cont combining time}) and (\ref{eq: cont combining 3 time}) while using (\ref{eq: identify RHS3})-(\ref{eq: identify RHS4}) gives:
\begin{align}\label{eq: cont combining 4 time}
  \Delta t_n^{-1} \displaystyle\int_\Omega \tfrac{1}{2}  \rho_{n+1} \|\bu_{n+1}\|^2 +\frac{1}{\mathbb{F}{\rm r}^2}y \rho_{n+1} + \frac{1}{\mathbb{W}{\rm e}} \delta_\Gamma(\phi_{n+1})~{\rm d}\Omega & \nn\\
  - \Delta t_n^{-1} \displaystyle\int_\Omega \tfrac{1}{2}  \rho_{n} \|\bu_{n}\|^2 +\frac{1}{\mathbb{F}{\rm r}^2}y \rho_{n}+ \frac{1}{\mathbb{W}{\rm e}} \delta_\Gamma(\phi_{n})~{\rm d}\Omega&=~  \displaystyle\int_\Omega\nabla \bu_{n+1/2} : \boldsymbol{\tau}(\bu_{n+1/2}, \phi_{n+1/2})~{\rm d}\Omega\nn\\
  &~~~~+ \afunc{error},
\end{align}
with $\afunc{error}$ defined in (\ref{eq: error def}). Recognizing the left-hand side as the change in energy completes the proof.
\end{proof}



\bibliographystyle{unsrt}
\bibliography{references}

\end{document}